\pgfplotsset{compat=newest}
\pgfplotsset{plot coordinates/math parser=false}
\newcommand{\e}{\mathrm{e}}
\newcommand{\R}{\mathbb R}
\newcommand{\N}{\mathbb N}
\newcommand{\eps}{\epsilon}
\renewcommand{\phi}{\varphi}
\newcommand{\mEF}{\mathcal{EF}}
\newcommand{\OT}{{\Omega_{T}}}
\newcommand{\loc}{\textnormal{loc}}
\newcommand{\dd}{\ensuremath{\,\mathrm{d}}}
\DeclareMathOperator{\sgn}{sgn}
\DeclareMathOperator*{\esssup}{ess-\sup}
\DeclareMathOperator{\supp}{supp}
\DeclareMathOperator*{\essinf}{ess\ inf}
\renewcommand{\epsilon}{\varepsilon}
\newtheorem{theo}{Theorem}[section]
\newtheorem{defi}[theo]{Definition}
\newtheorem{cor}[theo]{Corollary}
\newtheorem{lem}[theo]{Lemma}
\newtheorem{ass}[theo]{Assumption}
\newtheorem{rem}[theo]{Remark}
\crefname{theo}{Thm.}{Theorems}
\crefname{ass}{Asm.}{Assumptions}
\crefname{defi}{Def.}{Definitions}
\crefname{rem}{Rmk.}{Remarks}
\crefname{prop}{Prop.}{Propositions}
\crefname{lem}{Lem.}{Lemmata}
\newenvironment{talign*}
 {\let\displaystyle\textstyle\csname align*\endcsname}
 {\endalign}
 \newenvironment{tgather*}
 {\let\displaystyle\textstyle\csname gather*\endcsname}
 {\endgather}
\title{Nonlocal conservation laws with \texorpdfstring{\(p\)}{p}-norm, the singular limit problem and applications to traffic flow}
\author[<1>]{Felisia Angela Chiarello}
\ead{<felisiaangela.chiarello@univaq.it>}
\affiliation[<1>]{organization={University of L'Aquila, Department of Engineering and Information Science and Mathematics,
(DISIM)},
            addressline={Via Vetoio, Ed. Coppito 1}, 
            city={L'Aquila},
            postcode={67100}, 
            country={Italy}}
\author[<2>]{Alexander Keimer}
\affiliation[<2>]{organization={University of Rostock, Institute of Mathematics},
            addressline={Ulmenstraße 69}, 
            city={Rostock},postcode={18057}, 
            country={Germany}}
 \ead{<alexander.keimer@fau.de>}   
 \author[<3>]{Lukas Pflug}
\affiliation[<3>]{organization={Friedrich-Alexander-Universität Erlangen-Nürnberg (FAU), Department of Mathematics},
            addressline={Cauerstr. 11}, 
            city={Erlangen},postcode={91058}, 
            country={Germany}}
 \ead{<lukas.pflug@fau.de>}
\begin{document}
\begin{abstract}
In this contribution, we study scalar nonlocal conservation laws with the \(p\)-norm. Here, ``nonlocal'' means that the velocity of the conservation law depends on an integral term in space. Typically, the nonlocal term consists of integrating the solution in \(L^{1}\), whereas here we will study the case when the solution is integrated in the \(L^{p}\)-norm. We consider even the case of the \(L^{p}\) metric when \(p\in (0,1)\) and establish, for an initial datum which is uniformly bounded away from zero, the existence and uniqueness of weak solutions. We then demonstrate that there are also solutions to the initial datum being zero under more restrictive assumptions. Furthermore, we investigate the singular limit, i.e., what happens when the nonlocal kernel converges to a Dirac distribution. Indeed, for the one-sided exponential kernel, we recover the (entropy) solution of the corresponding local conservation law for all \(p\in(0,\infty)\) with further restrictions for \(p\in(0,1)\). This generalizes the celebrated singular limit result for nonlocal conservation laws for \(p=1\) significantly and showcases the robustness of the approximation of local conservation laws by nonlocal ones.

We investigate also the monotonicity of the solution when assuming that the initial datum is monotone. Finally, we prove the convergence of solutions for \(p\rightarrow 0\) on a small time horizon, resulting in a different kind of nonlocal conservation law.
Numerical studies showcasing the effect of \(p\) on the singular limit convergence and more conclude the contribution.
\end{abstract}
\begin{keyword}
    nonlocal conservation laws\sep \(p\)-norm\sep singular limit\sep traffic flow \sep entropy solution \sep phantom shocks
    \MSC[2020]{35L03, 35L65, 76A30}
\end{keyword}
\maketitle

\section{Introduction}
 Nonlocal conservation laws and nonlocal systems of conservation laws have been widely analyzed because they are suitable for describing various phenomena arising in many fields like sedimentation \cite{betancourt, Burger2023}, conveyor belts \cite{rossi2020well}, granular flows \cite{amadori}, crowd dynamics \cite{ColomboMercier2012}, supply chains \cite{colombo}, gradient constraint \cite{amorim}, viscoelasticity \cite{ChenChristoforou2007}, laser technology \cite{ColomboMarcellini2015}, biology \cite{ColomboRossi2015}, and more. 

Regarding the application to the traffic flow setting, we can enumerate nonlocal versions of the classical LWR model (after Lighthill, Whitham and Richards) \cite{ lighthill1955kinematic,lwr_2} and ARZ model \cite{aw2000resurrection}, in \cite{blandin2016well, chiarello, chiarello2020micro, Ciaramaglia2025}. 
Moreover, there are also some nonlocal models on junctions and networks, like in \cite{chiarello2019non,Friedrich2022,Gugat2015}, and nonlocal multi-class and multi-lane systems that describe heterogeneous traffic flow; see \cite{ciaramagliamulti-class2025, Friedrich2021,KeimerMultilane2022}.
In particular, the speed is assumed to depend on a weighted mean of the downstream traffic density, becoming a Lipschitz continuous function with respect to space and time variables, ensuring bounded acceleration, i.e.,  not allowing for speed discontinuities unlike classical first-order macroscopic traffic models.
In \cite{kloeden}, the authors use $L^p$-valued Lebesgue measurable density functions and signed Radon measures to model the flow characteristics of different types of vehicles, such as cars and trucks.
General existence and uniqueness results for nonlocal equations for scalar equations in one spatial dimension can be found in \cite{AmorimColomboTexeira,scialanga,Keimer2023}, in \cite{colombo, ColomboGaravelloMercier2012, spinola} for multi-dimensional scalar equations, and in \cite{aggarwal, ColomboRossiIBVP, Crippa2012} for the multi-dimensional system case.
To prove the well-posedness of these nonlocal models, there are various approaches: one approach provides suitable compactness estimates on a sequence of approximate solutions constructed through finite volume schemes, as in \cite{aggarwal, AmorimColomboTexeira, chiarello,  friedrich2018godunov}; another relies on characteristics and the use of fixed points as proposed in \cite{wang,pflug}. It is important to underline that, when using the fixed-point approach, the uniqueness of weak solutions is obtained without prescribing any kind of entropy condition, thus rendering entropy conditions obsolete.

A very challenging analytical problem regarding nonlocal conservation laws is that of the singular local limit: In the one-dimensional scalar case, a parameter $\eta>0$ related to the support of the kernel is fixed and the re-scaled kernel function is considered
\begin{equation*}
    \gamma_\eta\equiv\tfrac{1}{\eta}\gamma\left(\tfrac{\cdot}{\eta}\right).
\end{equation*}
Owing to the assumption $\int_{\R} \gamma_\eta(x)\dd x=1,$ when $\eta\to 0^+$ the family $\gamma_\eta$ converges weakly$^*$ in the sense of measures to the Dirac distribution, and we formally obtain the corresponding local conservation law:
\begin{equation*}
 \textbf{nonlocal: }  \begin{cases} \partial_t q_\eta+ \partial_x\big(q_\eta V(q_\eta\ast\gamma_\eta)\big)=0,\\ q_\eta(0,x)= q_{0}(x), \end{cases} \overset{\eta\rightarrow 0}{\longrightarrow}\  \textbf{ Local: }\begin{cases} \partial_t q+ \partial_x\big(q V(q)\big)=0,\\ q(0,x)=q_{0}(x),\end{cases}
\end{equation*}
with $V:\R \to \R$ being a Lipschitz continuous function.
The above derivation is merely formal and has to be rigorously justified. This means that one needs to show that the solution $q_\eta$ of the nonlocal Cauchy problem converges to the entropy-admissible solution of the corresponding local Cauchy problem in some suitable topology. The rigorous derivation of this limit was
initially posed in \cite{AmorimColomboTexeira} for the one-dimensional scalar case, motivated by numerical evidence, and later numerically corroborated in \cite{blandin2016well}.
In general, the solution of the nonlocal Cauchy problem does not converge to the solution of the local one and \cite{ColomboCrippaMarconiSpinolo2021} shows three counterexamples in this sense, while, in \cite{coclite2021singular, Graff}, the role of (numerical) viscosity of such nonlocal-to-local limit is highlighted, enabling passing to the limit on a ``diagonal sequence'' in both the viscosity approximation and the nonlocal kernel.

The first positive analytical answer was formulated in \cite{zumbrun}, under the assumptions of an even kernel function and a smooth compactly supported initial datum on a significantly small time horizon. 
Moreover, in \cite{pflug4} the authors proved that the solution of one-dimensional scalar nonlocal conservation laws converges to the entropy solution of the corresponding local conservation laws assuming monotone initial data. In \cite{ColomboCrippaMarconiSpinolo2021}, the authors show then that, for an initial datum which is one-sided Lipschitz continuous (OSL), such an OSL condition holds uniformly in the nonlocal kernel, so that passing to the limit and recovering the local entropy solution follow.

These results were then complemented in the one-dimensional scalar case and one-sided kernels in \cite{bressan2019traffic, bressan2021entropy, coclite2022general} for exponential kernel functions; \cite{Marconi2023,Colombo2024} generalized to convex kernels and other not necessarily fully monotonic kernels in \cite{keimerpflug23fixed}. Regarding nonlocal systems with a specific structure, applied to lane-changing traffic dynamics, the singular limit has been studied and proved to converge in \cite{chiarello2023system}.

A recent preprint \cite{Coclite2025singularlimit} approaches the singular limit problem via compensated compactness \cite{tartar1979compensated,Tartar1983} for a broad class of nonlocal kernels (particularly the constant kernel), avoiding otherwise required \(TV\) bounds on the initial datum. While it might be possible to use that also in this manuscript, our focus (as detailed below) is more on the \(p-\)norm, so we leave this potential generalization to the future.
Also fitting to the topic considered are \cite{Huang2024,DeNitti2025asymptotically} where the authors give first, for an initial datum with one-sided Lipschitz condition, and later, in the general case, numerical approximations which behave reasonably in the singular limit.

The aim of this paper is to investigate a new class of nonlocal scalar conservation laws with a flux function depending on the \(L^p-\)norm of a downstream convolution term between a general monotone kernel function and the solution/density as stated in \cref{defi:model_class}. One can apply this kind of equation in traffic flow modelling where the nonlocal operator reproduces, for larger \(p\), a more sensitive reaction on the traffic density ahead. The model class thus represents a generalization of the already existing nonlocal dynamics by replacing the \(L^{1}\) integral by a more general \(p-\)norm integral (including the special case \(p<1\)).
We investigate the well-posedness of this type of conservation law, point out the difference to the \(p=1\) case, and study the singular limit problem as well as the limit as $p\to0$. For all the named cases, we prove convergence and generalize the results in literature to `general' \(p\in (0,\infty)\).

The paper is organized as follows. In \cref{sec:existence_uniqueness}, we introduce the problem and prove the well-posedness starting from an initial datum bounded away from zero.  The singular limit problem, i.e., the convergence of the solutions when the kernel function in the nonlocal operator converges to a Dirac distribution is proven in \cref{sec:singular_limit}. We do this for the exponential kernel, although it might be generalizable to a much broader class of kernels following the lines of \cite{Marconi2023,Colombo2024,keimerpflug23fixed,Coclite2025singularlimit}. We also look into Oleinik estimates \cite{oleinik,oleinik_english}, and manage to obtain those under more restrictive assumptions in \cref{subsec:Oleinik}, similar to approaches taken for \(p=1\) in \cite{coclite2023oleinik}. In \cref{sec:existence_zero_initial_datum}, it is then shown that solutions can also be constructed for an initial datum which can be zero; some comments on a potential singular limit argument are made.
\Cref{sec:monotonicity} analyzes the monotonicity-preserving behavior of the $p-$norm nonlocal conservation law; in \cref{sec:p=0}, we prove the convergence of solutions for $p\to0$ to another non-canonical nonlocal conservation law. \Cref{sec:numerics} numerically illustrates the findings of existence and uniqueness of solutions as well as the singular limit problem. The paper is concluded in \cref{sec:conclusions} with some open problems worth pursuing in the future.

\section{Model class and existence and uniqueness of solutions to the nonlocal conservation law}\label{sec:existence_uniqueness}

We start by stating the model class under consideration as well as the required assumptions.
\begin{defi}[The model class considered]\label{defi:model_class}
For \(T\in\R_{>0}\) and \(p\in(0,\infty)\) we consider for the (density) function \(q:(0,T)\times\R\rightarrow\R_{\geq0}\) the following Cauchy problem
\begin{align}\label{eq:pnorm_problem}
q_{t}(t,x)+\partial_{x}\Big(V(W_{p, \eta}[q,\gamma](t,x))q(t,x)\Big)&=0,&& (t,x)\in\OT,\\
q(0,x)      &=q_{0}(x),      && x\in\R,
\intertext{with the nonlocal term in \(p\)}
W_{p,\eta}[q,\gamma](t,x)&=\left(\tfrac{1}{\eta}\int_{x}^{\infty}\Big(\gamma\big(\tfrac{y-x}{\eta}\big)q(t,y)\Big)^{p}\dd y\right)^{\frac{1}{p}},&&(t,x)\in\OT,\label{eq:nonlocal_operator_2}
\end{align}
\(q_{0}:\R\rightarrow\R_{\geq0}\) the \textbf{initial density}, \(W_{p,\eta}\) the \textbf{nonlocal} term, \(V:\R\rightarrow\R\) the \textbf{velocity}, \(\gamma:\R\rightarrow\R_{\geq0}\) the \textbf{nonlocal weight or kernel} and \(\eta\in\R_{>0}\) the \textbf{look-ahead parameter or nonlocal reach}.
\end{defi}
\begin{rem}[Kernel with \(p-\)th power]
One could consider, as the nonlocal operator, also
\begin{equation}
\left(\tfrac{1}{\eta}\int_{x}^{\infty}\gamma\big(\tfrac{y-x}{\eta}\big)q(t,y)^{p}\dd y\right)^{\frac{1}{p}},\ (t,x)\in\OT,\label{eq:weight_outside}
\end{equation}
but this would not really take the \(p-\)norm of some weighted down-stream density. This comes particularly clear when letting \(p\rightarrow\infty\) as the nonlocal weight would then collapse to a constant function. However, when not considering any limiting behavior like \(\eta\rightarrow 0\) as in \cref{sec:singular_limit} or \(p\rightarrow \{0,\infty\}\) in \cref{sec:p=0}, one can rescale the kernel so that all results concerning existence and uniqueness and maximum principles also apply in the case of \cref{eq:weight_outside}.
\end{rem}
To prove well-posedness, that is, existence and uniqueness for the Cauchy problem introduced in \cref{defi:model_class} on any finite time horizon, we require the following assumptions.
\begin{ass}[Assumption on the datum and the velocity function \(V\)]\label{ass:input_datum_and_velocity}
We assume that the following holds:

\begin{description}
    \item[Initial datum:] \(q_{0}\in L^{\infty}(\R;\R_{>q_{\min}})\cap TV(\R),\ q_{\min}\in\R_{>0}\)
    \item[Velocity function:] \(V\in W^{2,\infty}_{\loc}(\R;\R), \ V'\leqq0\text{ on }\R\)
    \item[Nonlocal kernel:] \(\gamma\in W^{2,1}(\R_{>0};\R_{\geq 0})\), \(\gamma\) monotonically decreasing, \(\|\gamma\|_{L^{p}(\R_{>0})}=1\)
    \item[Nonlocal reach, integrability:] \(\eta\in\R_{>0}\), \(p\in(0,\infty)\)
\end{description}
\end{ass}
\begin{rem}[The meaning of \(q_{\min}\), \(V\), \(\gamma\)]
Assuming that the initial datum is bounded away from zero by \(q_{\min}\) in \cref{ass:input_datum_and_velocity} is crucial for the proof of existence and uniqueness of solutions. Thanks to a later-derived maximum principle in \cref{theo:existence_uniqueness_maximum_principle} the solution can never become smaller than \(q_{\min}\) so that the nonlocal operator, i.e., the \(p-\)norm integral, remains Lipschitz in space and is thus Lipschitz continuous, a key property when using the characteristics and applying Banach's fixed-point theorem. 

The requirements on the velocity \(V\) and the kernel \(\gamma\) are imposed for obtaining a maximum and minimum principle. From a traffic perspective, a decreasing velocity is quite natural, and the kernel being monotonically decreasing means that traffic density closer to the current position is taken into account more than traffic further away.
\end{rem}
Next, we will state the definition of a weak solution, which as given here, is quite classical:
\begin{defi}[Weak solution for \cref{defi:model_class}]\label{defi:weak_solution}
  For \(T\in\R_{>0}\), we call the function \(q\in C\big([0,T];L^{1}_{\text{loc}}(\R)\big)\) a weak solution to the Cauchy problem in \cref{defi:model_class} given \cref{ass:input_datum_and_velocity} iff \(\forall\phi\in C^{1}_{\text{c}}((-42,T)\times\R)\) it holds that
  \begin{gather*}
      \iint_{\OT}\Big(\phi_{t}(t,x)+V(W_{p,\eta}[q,\gamma](t,x))\phi_{x}(t,x)\Big)q(t,x)\dd x\dd t+\int_{\R} q_{0}(x)\phi(0,x)\dd x=0
  \end{gather*}
  with \(W_{p,\eta}[q,\gamma]\) being the nonlocal operator as in \cref{eq:nonlocal_operator_2}.
\end{defi}
Having stated the required assumptions, we aim now to prove the existence and uniqueness of solutions. This type of proof has already been demonstrated for different but similar nonlocal conservation laws (see \cite{pflug,Friedrich2024,Keimer2023}) in the literature, so we only sketch it here. The approach goes back to \cite{wang,pflug}, replacing the nonlocal term with a time- and space-dependent velocity field $w$, constructing a solution of the corresponding linear conservation law through the method of characteristics and then deducing a fixed-point equation for the nonlocal term.
%%%%%%%%%%%%%%%%%%%%%%%%%%%%%%%%%%%%%%%%%%%%%%
\begin{theo}[Existence, uniqueness, maximum principle and continuity w.r.t.\ initial datum]\label{theo:existence_uniqueness_maximum_principle}
Let \cref{ass:input_datum_and_velocity} and \(T\in\R_{>0}\) hold. Then, the considered Cauchy problem in \cref{defi:model_class} admits a unique weak solution in the sense of \cref{defi:weak_solution} \[q\in C\big([0,T];L^{1}_{\loc}(\R)\big)\cap L^{\infty}((0,T);L^{\infty}(\R)\cap TV(\R))\] and the maximum principle is satisfied, i.e., it holds that
\begin{equation}
    (t,x)\in(0,T)\times\R \text{ a.e: } q_{\min}\leq \essinf_{\tilde{x}\in\R} q_{0}(\tilde{x})\leq q(t,x)\leq \|q_{0}\|_{L^{\infty}(\R)}.\label{eq:min_max}
\end{equation}
Moreover, choosing a mollifier of the initial datum and calling it \(\tilde{q}_{0}\in TV(\R)\cap L^{\infty}(\R)\cap C^{1}(\R)\) and the corresponding solution \(\tilde{q}\in C\big([0,T];L^{1}_{\text{loc}}(\R)\big)\), we have the following stability result:
\[
\forall \eps\in\R_{>0} \ \exists \delta\in\R_{>0}:\ \|q_{0}-\tilde{q}_{0}\|_{L^{1}(\R)}\leq \delta\implies \|q-\tilde{q}\|_{C([0,T];L^{1}(\R))}\leq \eps,
\]
and \(\tilde{q}\in C^{1}(\OT),\) i.e.,\ \(\tilde{q}\) is a classical solution.
\end{theo}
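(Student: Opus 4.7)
The plan is to adopt the fixed-point-via-characteristics method of \cite{wang,pflug,Keimer2023}, which the authors explicitly point to. First I would fix a candidate velocity field $w\in C([0,T^{*}];W^{1,\infty}(\R))$ lying in a closed ball of radius determined by the data, and consider the \emph{linear} transport problem $q_t+(wq)_x=0$ with $q(0,\cdot)=q_0$. Integration along the characteristic ODE $\dot\xi=w(\cdot,\xi)$ yields the explicit representation
\[
q[w](t,x)=q_0\bigl(\xi[w](0;t,x)\bigr)\exp\Bigl(-\int_0^t w_x\bigl(\tau,\xi[w](\tau;t,x)\bigr)\,\dd\tau\Bigr),
\]
which is the unique weak/classical solution of the linear Cauchy problem. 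This defines a nonlinear map $\mathcal F:w\mapsto V\bigl(W_{p,\eta}[q[w],\gamma]\bigr)$, whose fixed points are precisely the solutions of \cref{eq:pnorm_problem}.

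The crucial observation for the $p$-norm setting is that the representation above yields the strict positivity $q[w](t,x)\geq q_{\min}e^{-t\|w_x\|_{L^{\infty}}}>0$, so that $q[w]$ takes values in a set on which $r\mapsto r^{p}$ is smooth. Using this together with the regularity of $\gamma$ and the change of variables $z=(y-x)/\eta$, one checks that $W_{p,\eta}[q[w],\gamma]\in C([0,T^{*}];W^{1,\infty}(\R))$ with bounds depending only on $q_{\min}$, $\|q_0\|_{L^{\infty}}$, $\|\gamma\|_{W^{1,1}(\R_{>0})}$, $\eta$ and $p$. Combined with $V\in W^{2,\infty}_{\loc}$, this both self-maps the chosen ball and produces, via standard stability estimates for ODE flows, a Lipschitz bound of the form $\|\mathcal F(w_1)-\mathcal F(w_2)\|\leq CT^{*}\|w_1-w_2\|$ on a short horizon. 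Thus $\mathcal F$ is a strict contraction for sufficiently small $T^{*}$, and since the length of this horizon depends only on \emph{a priori} quantities, iterating the argument covers the whole of $[0,T]$ with a finite number of steps.

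From the fixed point one recovers the maximum principle by plugging $w=V(W_{p,\eta}[q,\gamma])$ back into the characteristic formula and exploiting $V'\leq 0$ together with the monotonicity $\gamma'\leq 0$ to obtain the sign of $w_x$ needed to conclude $q_{\min}\leq \essinf q_0\leq q(t,x)\leq \|q_0\|_{L^{\infty}}$ pointwise a.e., essentially as in \cite{pflug,Keimer2023}. The $TV$ bound is obtained by differentiating the representation along a smooth mollification $\tilde q_0$ and applying Gronwall in $L^{1}$; the same computation, now with $\tilde q_0\in C^{1}$, shows that $\tilde q$ has $C^{1}$ characteristics, hence is a classical solution. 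For the $L^{1}$ stability, I would compare $q$ and $\tilde q$ via their characteristic representations: their difference satisfies a linear transport equation whose forcing is controlled by the Lipschitz dependence of $W_{p,\eta}$ (again using the uniform lower bound $q,\tilde q\geq q_{\min}$), and Gronwall combined with the contraction estimate produces the continuous dependence with an explicit modulus in $\|q_0-\tilde q_0\|_{L^{1}(\R)}$.

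The main obstacle is the subcritical regime $p\in(0,1)$, where $r\mapsto r^{p}$ is merely $p$-Hölder near the origin. All the Lipschitz estimates driving both the self-mapping and the contraction in the fixed-point scheme degenerate as $r\to 0^{+}$, and the argument breaks down unless one can propagate a uniform strict lower bound on $q$. This is exactly what the assumption $q_0\geq q_{\min}>0$ combined with the characteristic representation provides: it pins $q$ away from zero throughout $[0,T]$ and restores full Lipschitz control of $W_{p,\eta}$ in the $x$-variable. Without this positivity the scheme fails and a separate treatment, as indeed carried out in \cref{sec:existence_zero_initial_datum}, becomes necessary.
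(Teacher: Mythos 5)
Your proposal follows the same architecture as the paper's proof: fix a velocity field, write the linearised solution via characteristics, form a fixed-point map, establish self-mapping and contraction on a short horizon, and iterate. (Whether one takes the fixed-point variable to be $w=W_{p,\eta}[q,\gamma]$ as the paper does, or $w=V(W_{p,\eta}[q,\gamma])$ as you do, is a harmless change of variables.) Your identification of the $p<1$ regime as the critical obstacle, and of the strict positivity $q_0\geq q_{\min}$ as the resolving hypothesis, also matches the paper.

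There is, however, one step that would not go through as you wrote it. You assert that the maximum/minimum principle follows from ``plugging $w$ back into the characteristic formula and exploiting $V'\leq 0$ together with $\gamma'\leq 0$ to obtain the sign of $w_x$''. But $w_x = V'(W_{p,\eta})\,\partial_x W_{p,\eta}$ has no global sign: from the identity $pW^{p-1}\partial_x W = -\tfrac{1}{\eta}\gamma(0)^p q^p - \tfrac{p}{\eta^2}\int_x^\infty \gamma^{p-1}\gamma' q^p\,\dd y$ the two contributions have opposite signs and neither dominates uniformly. Consequently the characteristic representation by itself only gives $q\geq q_{\min}e^{-t\|w_x\|_{L^\infty}}$, a lower bound that degrades in $t$. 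The way the paper (and its references) actually establishes \cref{eq:min_max} is by evaluating $\partial_t\tilde q$ at a spatial extremum $\tilde x$ of the (smoothed) solution, where $\partial_x\tilde q(t,\tilde x)=0$; only \emph{there} does the combination of $V'\leq 0$, $\gamma'\leq 0$, and the fact that $\tilde q(t,\tilde x)$ majorises (resp.\ minorises) $\tilde q(t,\cdot)$ force $\partial_x W_{p,\eta}(t,\tilde x)\leq 0$ (resp.\ $\geq 0$), hence $\partial_t\tilde q(t,\tilde x)\leq 0$ (resp.\ $\geq 0$). This is not a cosmetic detail: the sharp, time-uniform lower bound $q\geq q_{\min}$ is precisely what keeps the Lipschitz constants of $W_{p,\eta}$ (which involve $W^{1-p}$ and therefore blow up as $W\to 0^+$ when $p>1$) uniform across the time-clustering iteration; with only the degrading bound from the characteristic formula, the short horizons $T^*$ would shrink at each step. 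You should replace the ``sign of $w_x$'' argument with the extremal-point computation to make the iteration close.
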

\begin{proof}
As in \cite{pflug,Keimer2023} the idea is to consider a fixed-point mapping in the nonlocal term \(w\coloneqq W_{p}[q,\gamma_\eta]\). Assume that this term is given and  belongs to \(L^{\infty}\big((0,T^{*});W^{1,\infty}_{\text{loc}}(\R)\big)\) such that \(\partial_{2}W_{p}[q,\gamma_\eta]\in L^{\infty}((0,T);L^{\infty}(\R))\). Then, we have a Lipschitz continuous velocity field and can write the solution to the conservation law in terms of characteristics (see \cite{pflug})  as
\begin{equation}
q(t,x)=q_{0}\big(\xi_{w}(t,x;0)\big)\,\partial_{2}\xi_{w}(t,x;0),\ (t,x)\in\OT,\label{eq:solution_formula}
\end{equation}
where \(\xi:\OT\times(0,T)\) satisfies the integral equality (characteristics in integral form)
\begin{equation}
\xi_{w}(t,x;\tau)=x+\int_{t}^{\tau}V\left(w\big(s,\xi_{w}(t,x;s)\big)\right)\dd s,\  (t,x,\tau)\in\OT\times (0,T).\label{eq:characteristics}
\end{equation}
Plugging the solution formula in \cref{eq:solution_formula} into the nonlocal operator for \((t,x)\in\OT\) leads to
\begin{align}
w(t,x)&=\left(\tfrac{1}{\eta}\int_{x}^{\infty}\Big(\gamma\big(\tfrac{y-x}{\eta}\big)q(t,y)\Big)^{p}\dd y\right)^{\frac{1}{p}}=\left(\tfrac{1}{\eta}\int_{x}^{\infty}\Big(\gamma\big(\tfrac{y-x}{\eta}\big)q_{0}(\xi_{w}(t,y;0))\,\partial_{2}\xi_{w}(t,y;0)\Big)^{p}\,\dd y\right)^{\frac{1}{p}},\label{eq:fixed_point_1}
\intertext{and substituting \(z=\xi_{w}(t,y;0)\)}
&=\left(\tfrac{1}{\eta}\int_{\xi_{w}(t,x;0)}^{\infty}\Big(\gamma\big(\tfrac{\xi_{w}(0,z;t)-x}{\eta}\big)q_{0}(z)\,\partial_{2}\xi_{w}(t,\xi_{w}(0,z;t);0)\Big)^{p}\partial_{2}\xi_{w}(0,z;t)\dd z\right)^{\frac{1}{p}},
\intertext{which simplifies further because \(\partial_{2}\xi_{w}(t,\xi_{w}(0,z;t);0)=\tfrac{1}{\xi_{w}(0,z;t)} \forall (t,z)\in\OT\):}
&=\left(\tfrac{1}{\eta}\int_{\xi_{w}(t,x;0)}^{\infty}\Big(\gamma\big(\tfrac{\xi_{w}(0,z;t)-x}{\eta}\big)q_{0}(z)\Big)^{p}\partial_{2}\xi_{w}(0,z;t)^{1-p}\dd z\right)^{\frac{1}{p}}.\label{eq:fixed_point_2}
\end{align}
%%%%%%%%%%%%%%%%%%%%%%%%%%
This is a fixed-point equation in the nonlocal term, as the right-hand side is a function of \(w\). In the next steps, we will analyze the dependency of the characteristics on the nonlocal term \(w\) to finally prove the existence and uniqueness of a nonlocal term \(w\) in the previously mentioned topology with additional \(TV\) regularity on the spatial derivative of \(w\). 
This is done through Banach's fixed-point theorem.  
%%%%%%%%%%%%%%%%%%%%%%%%%%%%%%%%%%%%%%%%
We can show the self-mapping property and the contraction property in the topology \(L^{\infty}(L^{\infty})\) achieved on a significantly small time horizon \(T^{*}\in(0,T]\). Then, we will be able to prove the stability result, which is just a revisiting of our fixed-point mapping. In this way, we can approximate the given solution smoothly on a significantly small time horizon \([0,T^{*}]\). At this point, we can prove the maximum principle on the smoothed solution \(\tilde{q}\) on \([0,T^{*}]\)  as follows (for details, refer to \cite{Friedrich2024,friedrich2022conservation}): Let us consider, for a given time \(t\in[0,T^{*}]\) an \(\tilde{x}\in\R:\ \tilde{q}(t,\tilde{x})=\|\tilde{q}(t,\cdot)\|_{L^{\infty}(\R)}\), i.e., a point where the solution is maximal, meaning that \(\partial_{2}q(t,\tilde{x})=0\). Then, one can compute
\begin{align*}
    \partial_{t}\tilde{q}(t,x)&=-\partial_{x}\big(V(W_{p,\eta}[\tilde{q},\gamma,](t,x))\tilde{q}(t,x)\big)\\
    &=-V'(W_{p,\eta}[\gamma,\tilde{q}](t,x))\,\partial_{2}W_{p,\eta}[\tilde{q},\gamma](t,x)\tilde{q}(t,x)+ V\big(W_{p, \eta}[\tilde{q},\gamma](t,x)\big)\,\partial_{2}\tilde{q}(t,x)
    \intertext{which, evaluated at \(x=\tilde{x}\),}
    &=-V'\big(W_{p, \eta}[\tilde{q},\gamma](t,\tilde{x})\big)\ \tilde{q}(t,\tilde{x})\ \partial_{2}W_{p, \eta}[\tilde{q},\gamma](t,\tilde{x}).
\end{align*}
As the first two terms together with the minus sign are positive, we look at the third term and compute its derivative as follows:
\begin{align}
\partial_{2}W_{p,\eta}[\tilde{q},\gamma](t,\tilde{x})=\tfrac{1}{p}W_{p,\eta}[\tilde{q},\gamma](t,\tilde{x})^{1-p}\bigg(-\tfrac{p}{\eta^{2}}\int_{\tilde{x}}^{\infty}\gamma(\tfrac{y-\tilde{x}}{\eta})^{p-1}\gamma'(\tfrac{y-\tilde{x}}{\eta})\tilde{q}(t,y)^{p}\dd y-\tfrac{1}{\eta}\gamma(0)^{p}\tilde{q}(t,\tilde{x})^{p} \bigg)\label{eq:partial_2_W_reformulations}
\intertext{in which the first factor is positive, and \(\gamma'\leqq 0\), so we can estimate at the spatial point \(\tilde{x}\) where the solution is maximal as follows:}
\leq \tfrac{1}{p}W_{p,\eta}[\tilde{q},\gamma](t,\tilde{x})^{1-p}\bigg(-\tilde{q}(t,\tilde{x})^{p}\tfrac{p}{\eta^{2}}\int_{\tilde{x}}^{\infty}\gamma(\tfrac{y-\tilde{x}}{\eta})^{p-1}\gamma'(\tfrac{y-\tilde{x}}{\eta})\dd y-\tfrac{1}{\eta}\gamma(0)^{p}\tilde{q}(t,\tilde{x})^{p} \bigg)\notag
&=0.
\end{align}
This means that the solution cannot increase when it is at its maximum point (for the case where the supremum is attended at \(\pm\infty\) we refer the reader to \cite{friedrich2022conservation}).
Analogously, one can perform estimates for the minimum and show that the minimum can only increase. Altogether, this argument yields uniform constants on the smoothed solution for \(t\in[0,T^{*}]\) and thus on the original solution as well. By a time-clustering argument and recalling that the solution can be evaluated at \(t=T^{*}\) thanks to the specified regularity, we can cover any finite time horizon with our fixed-point approach and show the existence and the stability of solutions on such time horizon.

The uniqueness of the weak solution is analogous to what has been proven in \cite{pflug} and comes from the fact that the fixed-point mapping gives us a unique solution for our nonlocal term together with a Lipschitz-continuous velocity field,  and we can show that any weak solution can be posed in terms of characteristics.
\end{proof}

%%%%%%%%%%%%%%%%%%%%%%%%%%%%%%%%%%%%%%%%%%%%%%%%%%%%%%%%%%%%%%%%%%%%%%%
 \section{The singular limit problem} \label{sec:singular_limit}
In this section, we show the limit convergence of the nonlocal solution to the unique entropy solution of the corresponding local problem when the integral kernel converges to a Dirac distribution. To simplify the analysis, we focus merely on the exponential kernel, though it might be possible to generalize to convex kernels as was done in \cite{Marconi2023} or to kernels with finite support as in \cite{keimerpflug23fixed}.
We require \cref{lem:approx}, as follows, on approximating functions in \(L^{1}(\R)\) when assuming \(TV\) bounds:
\begin{lem}[Approximation of bounded functions with bounded total variation on  \(\R\)] \label{lem:approx}
Let \(q\in L^{\infty}\cap TV(\R)\). Then, \(q\) can be approximated by a smooth function in \(L^1(\R)\), i.e.,
\begin{equation*}
    \forall \varepsilon \in \R_{>0}\, \exists \psi_{\varepsilon}\in C^{\infty}(\R): \|\psi_\varepsilon-q\|_{L^1(\R)}\leq \varepsilon.
\end{equation*}
\end{lem}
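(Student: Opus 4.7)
The plan is to use a standard mollification argument combined with the classical $BV$ translation estimate. Fix a non-negative mollifier $\rho\in C_{\text{c}}^{\infty}(\R)$ with $\int_{\R}\rho=1$ and $\supp(\rho)\subseteq[-1,1]$, and set $\rho_{\delta}(\cdot):=\delta^{-1}\rho(\cdot/\delta)$ for $\delta\in\R_{>0}$. Define the candidate approximation
\[
\psi_{\varepsilon}\:= q\ast\rho_{\delta},
\]
with $\delta$ to be chosen at the end in terms of $\varepsilon$ and $TV(q)$.

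The first step is smoothness: since $q\in L^{\infty}(\R)\subseteq L^{1}_{\loc}(\R)$ and $\rho_{\delta}\in C_{\text{c}}^{\infty}(\R)$, standard properties of convolution give $\psi_{\varepsilon}\in C^{\infty}(\R)$. The second step is the $L^{1}$-estimate: writing
\[
\psi_{\varepsilon}(x)-q(x)=\int_{\R}\rho_{\delta}(y)\bigl(q(x-y)-q(x)\bigr)\dd y
\]
and applying Tonelli's theorem reduces matters to the classical $BV$ translation bound
\[
\|q(\cdot-y)-q(\cdot)\|_{L^{1}(\R)}\leq |y|\cdot TV(q),\quad y\in\R,
\]
which, together with $\supp(\rho_{\delta})\subseteq[-\delta,\delta]$, yields $\|\psi_{\varepsilon}-q\|_{L^{1}(\R)}\leq\delta\cdot TV(q)$. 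If $TV(q)=0$, then $q$ is (a.e.) constant and we simply take $\psi_{\varepsilon}\equiv q$; otherwise choose $\delta:=\varepsilon/TV(q)$ to conclude.

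The only mildly subtle point, which I expect to be the main obstacle worth remarking on, is that $q$ itself need not belong to $L^{1}(\R)$ (in fact, under \cref{ass:input_datum_and_velocity} and the maximum principle \cref{eq:min_max}, densities of interest are bounded below by $q_{\min}>0$ and are therefore not in $L^{1}(\R)$). Nevertheless, the \emph{difference} $\psi_{\varepsilon}-q$ does lie in $L^{1}(\R)$: this is exactly the content of the translation bound above, which one proves by expressing $q(x-y)-q(x)$ as an integral of the (finite) distributional derivative measure $Dq$ over an interval of length $|y|$ and applying Fubini. Since no $L^{1}$-integrability of $q$ is required at any point in the argument, the construction is well-defined and the estimate suffices to prove the lemma.
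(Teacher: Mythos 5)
Your proof is correct and follows essentially the same route as the paper's, which simply refers to a standard mollifier argument (citing \cite[Lemma 4.15]{pflug4} and \cite{leoni}); you have spelled out the details — in particular the $BV$ translation bound $\|q(\cdot-y)-q(\cdot)\|_{L^{1}(\R)}\leq |y|\,TV(q)$, the choice $\delta=\varepsilon/TV(q)$, and the observation that $q$ itself need not lie in $L^{1}(\R)$ while the difference $q\ast\rho_\delta-q$ does — all of which is accurate and exactly what the cited references do.
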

\begin{proof}
The proof is along the lines of \cite[Lemma 4.15]{pflug4} and consists of using, as an approxmation, a standard mollifier as defined in~\cite{leoni}.
\end{proof}
%%%%%%%%%%%%%%%%%%%%%%%%%%%%%%%%%%%%%%%%%%%%%%%%%%%%%%%%%%%%%%%%%%%%%%
We will show convergence by means of a \(TV\) estimate. 
As is well known for nonlocal conservation laws, the \(TV\) estimates on the nonlocal term seem to give generally stronger results than \(TV\) estimates on the solution (which might not even exist \cite{ColomboCrippaMarconiSpinolo2021} uniformly with respect to the integral kernel) or some Oleinik estimates, as the latter require primarily points-wise estimations on the solution which is more involved for nonlocal dynamics (compare \cite{coclite2023oleinik}).
As stated before, we will focus on the exponential kernel.
\begin{ass}[Exponential kernel]\label{ass:exponential}
We assume that the nonlocal operator in \cref{eq:nonlocal_operator_2} is of exponential type\footnote{The factor $p$ in front of the following integral is due to the kernel's normalization in the given \(p\) norm as postulated in \cref{ass:input_datum_and_velocity}}, i.e., for \(\eta\in\R_{>0}\ni p\) 
\begin{align}
W_{p,\eta}[q,\gamma]\coloneqq W_{p,\eta}[q,\exp(-\cdot)]&\equiv\left(\tfrac{p}{\eta}\int_{\cdot}^{\infty}\Big(\exp\big(\tfrac{\cdot-y}{\eta}\big)q(t,y)\Big)^{p}\dd y\right)^{\frac{1}{p}}\text{ on } \OT.\label{eq:nonlocal_operator_exponential}
\end{align}
\end{ass}

\begin{lem}[A PDE entirely in the nonlocal term]\label{lem:PDE_nonlocal}
    Let \cref{ass:exponential} hold and abbreviate, for simplicity, \(W_{\eta}\coloneqq W_{p,\eta}[q,\exp(-\cdot)]\). Then, the following identity holds
\begin{equation}
q_{\eta}^{p}(t,x)=W_{\eta}^{p}(t,x)-\eta W_{\eta}^{p-1}(t,x)\,\partial_{x}W_{\eta}(t,x),\ (t,x)\in\OT.\label{eq:nonlocal_identity}
\end{equation}
Moreover, the nonlocal operator satisfies its own Cauchy problem:
\begin{equation}
\begin{aligned}
    \partial_{t}W_{\eta}(t,x)&=-V\big(W_{\eta}(t,x)\big)\partial_{x}W_{\eta}(t,x)-W_{\eta}^{1-p}(t,x)\tfrac{p}{\eta}\int_{x}^{\infty}\exp(\tfrac{px-py}{\eta})V'\big(W_{\eta}(t,y)\big)\,\partial_{y}W_{\eta}(t,y)W_{\eta}(t,y)^{p}\dd y\\
    &\quad +W_{\eta}(t,x)^{1-p}(p-1)\int_{x}^{\infty}\!\!\!\!\!\!\exp(\tfrac{px-py}{\eta})V'(W_{\eta}(t,y))W_{\eta}^{p-1}(t,y)\big(\partial_{y}W_{\eta}(t,y)\big)^{2}\dd y,\ (t,x)\in\OT\\
    W_{\eta}(0,x)&=\left(\tfrac{p}{\eta}\int_{x}^{\infty}\Big(\exp\big(\tfrac{x-y}{\eta}\big)q_{0}(y)\Big)^{p}\dd y\right)^{\frac{1}{p}}\!\!,\ x\in\R.
\end{aligned}\label{eq:nonlocal_PDE}
\end{equation}
\end{lem}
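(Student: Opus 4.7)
The statement reduces to differentiating the integral representation
\(W_{\eta}^{p}(t,x)=\tfrac{p}{\eta}\int_{x}^{\infty}\exp\!\bigl(\tfrac{p(x-y)}{\eta}\bigr)q(t,y)^{p}\dd y\)
once in \(x\) for \eqref{eq:nonlocal_identity} and once in \(t\) for \eqref{eq:nonlocal_PDE}. Because \cref{theo:existence_uniqueness_maximum_principle} furnishes classical solutions for mollified initial data, I would derive the two identities pointwise on that regular level and then propagate them to the weak solution via the \(L^{1}\)-stability in the same theorem. The maximum principle \eqref{eq:min_max} moreover ensures \(W_{\eta}\geq q_{\min}>0\), legitimizing every division by \(W_{\eta}^{p-1}\) used below.

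\textbf{The spatial identity.} For \eqref{eq:nonlocal_identity} I would apply Leibniz' rule to \(W_{\eta}^{p}\) in \(x\): differentiating the lower limit \(y=x\) yields the boundary contribution \(-\tfrac{p}{\eta}q(t,x)^{p}\) (since \(\exp(0)=1\)), while differentiating the exponent inside the integrand reproduces \(\tfrac{p}{\eta}W_{\eta}^{p}\). Equating the result with \(pW_{\eta}^{p-1}\partial_{x}W_{\eta}\) and dividing by \(\tfrac{p}{\eta}\) gives exactly \(q^{p}=W_{\eta}^{p}-\eta W_{\eta}^{p-1}\partial_{x}W_{\eta}\).

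\textbf{The temporal PDE.} Differentiating \(W_{\eta}^{p}\) in \(t\) produces \(pW_{\eta}^{p-1}\partial_{t}W_{\eta}=\tfrac{p}{\eta}\int_{x}^{\infty}e^{p(x-y)/\eta}\partial_{t}(q^{p})\dd y\). Using the original equation \(\partial_{t}q+\partial_{x}(V(W_{\eta})q)=0\) and the chain rule gives the pointwise relation \(\partial_{t}(q^{p})=-V(W_{\eta})\partial_{y}(q^{p})-pq^{p}V'(W_{\eta})\partial_{y}W_{\eta}\). I would then integrate by parts the \(V(W_{\eta})\partial_{y}(q^{p})\) integral: the boundary at \(y=x\) produces \(\tfrac{p}{\eta}V(W_{\eta}(t,x))q(t,x)^{p}\), and the derivative of \(e^{p(x-y)/\eta}V(W_{\eta})\) in \(y\) contributes both a \(\tfrac{p}{\eta}\)-multiple of \(\int e^{p(x-y)/\eta}V(W_{\eta})q^{p}\) and an extra \(V'(W_{\eta})\partial_{y}W_{\eta}\) term. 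At this stage I would substitute the identity of part 1, \(q^{p}=W_{\eta}^{p}-\eta W_{\eta}^{p-1}\partial_{y}W_{\eta}\), into every remaining occurrence of \(q^{p}\); the \(\int e^{p(x-y)/\eta}V(W_{\eta})W_{\eta}^{p-1}\partial_{y}W_{\eta}\) piece produced thereby can be integrated by parts one more time (rewriting \(pW_{\eta}^{p-1}\partial_{y}W_{\eta}=\partial_{y}(W_{\eta}^{p})\)), after which all \(V(W_{\eta})W_{\eta}^{p}\)-type terms telescope away. What remains is a single boundary term \(-pV(W_{\eta})W_{\eta}^{p-1}\partial_{x}W_{\eta}\) together with two integral contributions whose \((\partial_{y}W_{\eta})^{2}V'(W_{\eta})W_{\eta}^{p-1}\)-coefficients add to \(p(p-1)\) and whose \(V'(W_{\eta})\partial_{y}W_{\eta}W_{\eta}^{p}\)-coefficient is \(-p^{2}/\eta\). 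Dividing by \(pW_{\eta}^{p-1}\) yields precisely \eqref{eq:nonlocal_PDE}. The initial datum formula is immediate from evaluating the definition of \(W_{\eta}\) at \(t=0\) with \(q(0,\cdot)=q_{0}\).

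\textbf{Main obstacle.} The analytical content is elementary; the real difficulty is bookkeeping. After the integration-by-parts and the substitution via the spatial identity one has roughly half a dozen boundary and integral contributions, and one must track signs carefully to see the \(V(W_{\eta})W_{\eta}^{p}\)-terms telescope and the \(V'\)-terms recombine with the correct coefficients \(-p^{2}/\eta\) and \(p(p-1)\). A minor secondary issue is to justify the integration by parts rigorously: the exponential decay of the kernel together with the \(L^{\infty}\) bound on \(q\) guarantees that the boundary term at \(y=+\infty\) vanishes, and the regularity of the classical (mollified) solution ensures that all derivatives written are meaningful before passing to the limit in the approximation.
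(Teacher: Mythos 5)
Your proposal is correct and follows essentially the same strategy as the paper, with one cosmetic reorganization of the computation that is worth noting. For the spatial identity \eqref{eq:nonlocal_identity} you differentiate $W_\eta^p$ in $x$ via Leibniz, exactly as the paper does. For the temporal PDE, the paper's route is to first use the spatial identity to rewrite $\partial_t(q_\eta^p)$ \emph{entirely} in terms of $W_\eta$, which necessarily introduces $\partial_x^2 W_\eta$; it then plugs this into the integral representation of $\partial_t W_\eta$ (giving five integral contributions) and removes the offending second derivative with a single integration by parts. You instead plug the raw expression $\partial_t(q^p)=-V(W_\eta)\partial_y(q^p)-pq^pV'(W_\eta)\partial_y W_\eta$ into the integral, integrate by parts the $V(W_\eta)\partial_y(q^p)$ term (collecting a boundary term at $y=x$ and shifting $\partial_y$ onto the weight), substitute the spatial identity into the remaining $q^p$ occurrences, and then do a second integration by parts on the resulting $\int e^{p(x-y)/\eta}V(W_\eta)W_\eta^{p-1}\partial_y W_\eta$ term. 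I have checked that your bookkeeping closes: the boundary term $\frac{p}{\eta}V(W_\eta)W_\eta^p$, the $-\frac{p^2}{\eta^2}\int e^{p(x-y)/\eta}V(W_\eta)W_\eta^p$ term, and the two pieces produced by the second integration by parts cancel exactly, leaving $-pV(W_\eta)W_\eta^{p-1}\partial_x W_\eta-\frac{p^2}{\eta}\int e^{p(x-y)/\eta}V'(W_\eta)\partial_y W_\eta\,W_\eta^p+p(p-1)\int e^{p(x-y)/\eta}V'(W_\eta)W_\eta^{p-1}(\partial_y W_\eta)^2$, which is precisely $pW_\eta^{p-1}$ times the claimed right-hand side. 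Your variant never writes $\partial_y^2 W_\eta$, which is marginally cleaner; the paper's variant substitutes earlier but pays with the extra second derivative that it must then integrate away. The regularization/stability scaffolding you invoke to justify the classical-solution manipulations is the same as the paper's (mollify $q_0$ via \cref{lem:approx}, argue on classical solutions, pass to the limit via the $L^1$-stability in \cref{theo:existence_uniqueness_maximum_principle}).
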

\begin{proof}
Computing the spatial derivative, we obtain, for \((t,x)\in\OT\)
\begin{align*}
    \partial_x W_{\eta}(t,x)&= \tfrac{1}{p} \Big(\tfrac{p}{\eta}\int_x^\infty \left(\exp(\tfrac{x-y}{\eta})q(t,y)\right)^p \dd y\Big)^{\tfrac{1}{p}-1} \left(\tfrac{p^{2}}{\eta^{2}} \int_x^{\infty}  (\exp\big(\tfrac{x-y}{\eta})\big)^p q^p(t,y) \dd y- \tfrac{p}{\eta} q^p(t,x)\right)\\
    &=\tfrac{1}{\eta} W_{\eta}(t,x)- \tfrac{1}{\eta}\Big( W_{\eta}(t,x)\Big)^{1-p} q_{\eta}^p(t,x)
\end{align*}
which can be rewritten
\begin{equation}
    q_{\eta}^{p}(t,x)=W_{\eta}^{p}(t,x)-\eta W_{\eta}^{p-1}(t,x)\,\partial_{x}W_{\eta}(t,x).
\label{eq:relation_q_W}
\end{equation}
This is exactly the identity claimed in \cref{eq:nonlocal_identity}.

For the second part, let us consider strong or even classical solutions of \cref{defi:model_class}, which is possible thanks to the approximation result in \cref{theo:existence_uniqueness_maximum_principle} and \cref{lem:approx}. Then, 
\begin{align}
    \partial_{t}q_{\eta}^{p}(t,x)&=pq_{\eta}(t,x)^{p-1}\partial_{t} q_{\eta}(t,x)\notag
    \intertext{using that \(q_{\eta}\) is a strong solution}
    &=-p q_{\eta}(t,x)^{p-1}\partial_{x}(V(W_{\eta}(t,x))q_{\eta}(t,x))\notag\\
    &=-p q_{\eta}(t,x)^p V'\big(W_{\eta}(t,x)\big) \, \partial_x W_{\eta}(t,x)-p q_{\eta}(t,x)^{p-1} V\big(W_{\eta}(t,x)\big) \partial_{x}q_{\eta}(t,x)\notag
    \intertext{taking advantage of the identity for \(q_{\eta}\) in \cref{eq:relation_q_W} to eliminate \(q_{\eta}\)}
    &=-pV'\big(W_{\eta}(t,x)\big) \partial_x W_{\eta}(t,x)\big(W_{\eta}(t,x)^{p}-\eta W_{\eta}(t,x)^{p-1}\partial_{x}W_{\eta}(t,x)\big)\notag\\
    & \quad-V(W_{\eta}(t,x))\Big(pW_{\eta}(t,x)^{p-1}\partial_{x}W_{\eta}(t,x)\label{eq:time_derivative_q}
\\
    &\qquad-\eta W_{\eta}(t,x)^{p-1}\partial_{x}^{2}W_{\eta}(t,x)-\eta (p-1)W_{\eta}(t,x)^{p-2}\big(\partial_{x}W_{\eta}(t,x)\big)^{2}\Big).\notag
\end{align}
Now, we can compute the time derivative of the nonlocal term \(W_{\eta}\) recalling its definition in \cref{eq:nonlocal_operator_exponential} as follows
\begin{align*}
    \partial_{t}W_{\eta}(t,x)&=W_{\eta}(t,x)^{1-p}\tfrac{1}{\eta}\int_{x}^{\infty}\exp(\tfrac{px-py}{\eta})\partial_{t}\big(q_{\eta}(t,y)^{p}\big)\dd y\\
    \intertext{plugging in the terms around \cref{eq:time_derivative_q}}
    &=-pW_{\eta}(t,x)^{1-p}\tfrac{1}{\eta}\int_{x}^{\infty}\exp(\tfrac{px-py}{\eta})W_{\eta}(t,y)^{p}V'(W_{\eta}(t,y))\partial_{y}W_{\eta}(t,y)\dd y\\
    &\quad +pW_{\eta}(t,x)^{1-p}\int_{x}^{\infty}\exp(\tfrac{px-py}{\eta}) W_{\eta}^{p-1}(t,y)\big(\partial_{y}W_{\eta}(t,y)\big)^{2}V'(W_{\eta}(t,y))\dd y\\
    &\quad -pW_{\eta}(t,x)^{1-p}\tfrac{1}{\eta}\int_{x}^{\infty}\exp(\tfrac{px-py}{\eta})W_{\eta}(t,y)^{p-1}\partial_{y}W_{\eta}(t,y)V(W_{\eta}(t,y))\dd y\\
    &\quad +W_{\eta}(t,x)^{1-p}(p-1)\int_{x}^{\infty}\exp(\tfrac{px-py}{\eta})W_{\eta}(t,y)^{p-2}\big(\partial_{y}W_{\eta}(t,y)\big)^{2}V(W_{\eta}(t,y))\dd y\\
    &\quad +W_{\eta}(t,x)^{1-p}\int_{x}^{\infty}\exp(\tfrac{px-py}{\eta})W_{\eta}(t,y)^{p-1}\partial_{y}^{2}W_{\eta}(t,y)V(W_{\eta}(t,y))\dd y.\\
    \intertext{Now an integration by parts in the last term gets rid of the second spatial derivative of \(W_{\eta}\) to yield}
 &=-pW_{\eta}(t,x)^{1-p}\tfrac{1}{\eta}\int_{x}^{\infty}\exp(\tfrac{px-py}{\eta})W_{\eta}(t,y)^{p}V'\big(W_{\eta}(t,y)\big)\partial_{y}W_{\eta}(t,y)\dd y\\
    &\quad +(p-1)W_{\eta}(t,x)^{1-p}\int_{x}^{\infty}\exp(\tfrac{px-py}{\eta}) W_{\eta}^{p-1}(t,y)\big(\partial_{y}W_{\eta}(t,y)\big)^{2}V'\big(W_{\eta}(t,y)\big)\dd y\\
    &\quad -V\big(W_{\eta}(t,x)\big)\partial_{x}W_{\eta}(t,x),
\end{align*}
which is the identity in \cref{eq:nonlocal_PDE}. The initial datum follows from the definition of the nonlocal operator in \cref{eq:nonlocal_operator_exponential} when setting \(t=0\) and recalling that \(q(0,\cdot)\equiv q_{0}\). Finally, one can use the stability result in \cref{theo:existence_uniqueness_maximum_principle} to conclude that the result also holds for an initial datum \(q_{0}\) satisfying only \cref{ass:input_datum_and_velocity}, i.e., being total variation bounded and essentially bounded but not smooth.
\end{proof}
It is also possible to identify a PDE in the \(p\)-th power of the nonlocal operator. This is what the following remark sketches.
\begin{rem}[A PDE on \(W_{\eta}^{p},\) the special case \(p=1\) and on minimal/maximal points of \(W_{\eta}\)]\label{rem:PDE_nonlocal_p}
Multiplying the PDE in \cref{eq:nonlocal_PDE} with \(W_{\eta}^{p-1}\), we obtain for \((t,x)\in\OT\)
\begin{align*}
        W^{p-1}_{\eta}(t,x)\partial_{t}W_{\eta}(t,x)&=-V\big(W_{\eta}(t,x)\big)W^{p-1}_{\eta}(t,x)\,\partial_{x}W_{\eta}(t,x)\\
        &\quad -\tfrac{p}{\eta}\int_{x}^{\infty}\!\!\exp(\tfrac{px-py}{\eta})V'\big(W_{\eta}(t,y)\big)\,\partial_{y}W_{\eta}(t,y)W_{\eta}(t,y)^{p}\dd y\\
    &\quad +(p-1)\int_{x}^{\infty}\!\!\exp(\tfrac{px-py}{\eta})V'\big(W_{\eta}(t,y)\big)W_{\eta}^{p-1}(t,y)\big(\partial_{y}W_{\eta}(t,y)\big)^{2}\dd y,
\end{align*}
which can be written as
\begin{equation}
\begin{aligned}
\partial_{t}W^{p}_{\eta}(t,x)&=-V\big(W_{\eta}(t,x)\big)\,\partial_{x}W^{p}_{\eta}(t,x)\\
        &\quad -\tfrac{p^{2}}{\eta}\int_{x}^{\infty}\!\!\exp(\tfrac{px-py}{\eta})V'\big(W_{\eta}(t,y)\big)\,\partial_{y}W_{\eta}(t,y)W_{\eta}(t,y)^{p}\dd y\\
    &\quad +p(p-1)\int_{x}^{\infty}\!\!\exp(\tfrac{px-py}{\eta})V'\big(W_{\eta}(t,y)\big)W_{\eta}^{p-1}(t,y)\big(\partial_{y}W_{\eta}(t,y)\big)^{2}\dd y.
\end{aligned}
\label{eq:nonlocal_W_p}
\end{equation}
Supplementing this with the proper initial datum,
\[
W^{p}_{\eta}(0,x)=\tfrac{p}{\eta}\int_{x}^{\infty}\!\!\Big(\exp\big(\tfrac{x-y}{\eta}\big)q_{0}(y)\Big)^{p}\dd y,\ x\in\R,
\]
we have obtained a PDE in \(W^{p}_{\eta}\) (involving, however, still \(\partial_{y}W_{\eta}\)) which will become handy in the following \cref{theo:TV_bounds_Wp} when obtaining uniform \(TV\) bounds.

Furthermore, on setting \(p=1\) in \cref{eq:nonlocal_PDE}, the previously obtained nonlocal dynamics in \(W_{\eta}^{p}\) result in the well-known identity \cite[Lemma 3.1]{coclite2022general} which plays a crucial role in the singular limit case for \(p=1\)
\begin{align*}
\partial_{t}W_{\eta}(t,x)&=-V\big(W_{\eta}(t,x)\big)\,\partial_{x}W_{\eta}(t,x) -\tfrac{1}{\eta}\int_{x}^{\infty}\!\!\exp(\tfrac{x-y}{\eta})V'\big(W_{\eta}(t,y)\big)\,\partial_{y}W_{\eta}(t,y)\,W_{\eta}(t,y)\dd y,\quad (t,x)\in\OT,
\end{align*}
as it guarantees uniform (in \(\eta\)) \(TV\) bounds of the nonlocal operator (for which see \cite[Theorem 3.2]{coclite2022general}). A similar strategy will be used (compare in particular \cref{theo:TV_bounds_Wp}) in this contribution for the general \(p\in(0,\infty)\) case.

Another point worth mentioning is the behavior of \(W_{\eta}\) at a minimum/maximum for \(p\neq 1\). To this end, define \(F(\cdot)\coloneqq \int_{0}^{\cdot} V'(s)s^{p}\dd s\) and rewrite the nonlocal PDE in \cref{eq:nonlocal_PDE} as
\begin{align*}
    \partial_{t}W_{\eta}(t,x)&=-V\big(W_{\eta}(t,x)\big)\,\partial_{x}W_{\eta}(t,x)-W_{\eta}^{1-p}(t,x)\tfrac{p}{\eta}\int_{x}^{\infty}\!\!\exp(\tfrac{px-py}{\eta})\tfrac{\dd}{\dd y}F\big(W_{\eta}(t,y)\big)\dd y\\
    &\quad +W_{\eta}(t,x)^{1-p}(p-1)\int_{x}^{\infty}\!\!\exp(\tfrac{px-py}{\eta})V'\big(W_{\eta}(t,y)\big)W_{\eta}^{p-1}(t,y)\big(\partial_{y}W_{\eta}(t,y)\big)^{2}\dd y.
    \intertext{Integrating by parts in the second term,}
    &=-V\big(W_{\eta}(t,x)\big)\,\partial_{x}W_{\eta}(t,x)-W_{\eta}^{1-p}(t,x)\tfrac{p}{\eta}\Big(\tfrac{p}{\eta}\int_{x}^{\infty}\!\!\exp(\tfrac{px-py}{\eta})F\big(W_{\eta}(t,y)\big)\dd y-F\big(W_{\eta}(t,x)\big)\Big)\\
    &\quad +W_{\eta}(t,x)^{1-p}(p-1)\int_{x}^{\infty}\!\!\exp(\tfrac{px-py}{\eta})V'\big(W_{\eta}(t,y)\big)\,W_{\eta}^{p-1}(t,y)\big(\partial_{y}W_{\eta}(t,y)\big)^{2}\dd y.
    \intertext{At a maximal point, viz., \(\tilde{x}\in\R: W_{\eta}(t,\tilde{x})=\|W_{\eta}(t,\cdot)\|_{L^{\infty}(\R)},\ \partial_{2}W_{\eta}(t,\tilde{x})=0\) it holds that}
    \partial_{t}W_{\eta}(t,\tilde{x})&=-W_{\eta}(t,\tilde{x})^{1-p}\tfrac{p}{\eta}\Big(\tfrac{p}{\eta}\int_{\tilde{x}}^{\infty}\!\!\exp(\tfrac{p\tilde{x}-py}{\eta})F\big(W_{\eta}(t,y)\big)\dd y-F(W_{\eta}(t,\tilde{x}))\Big)\\
    &\quad +W_{\eta}(t,\tilde{x})^{1-p}(p-1)\int_{\tilde{x}}^{\infty}\!\!\exp(\tfrac{p\tilde{x}-py}{\eta})V'\big(W_{\eta}(t,y)\big)W_{\eta}^{p-1}(t,y)\big(\partial_{y}W_{\eta}(t,y)\big)^{2}\dd y
    \intertext{and, as \(W(t,\tilde{x})\) is maximal and \(F\) is monotonically decreasing,}
    & \leq -W_{\eta}(t,\tilde{x})^{1-p}\tfrac{p}{\eta}\Big(F(W_{\eta}(t,x))\tfrac{p}{\eta}\int_{\tilde{x}}^{\infty}\!\!\exp(\tfrac{p\tilde{x}-py}{\eta})\dd y-F\big(W_{\eta}(t,\tilde{x})\big)\Big)\\
    &\quad +W_{\eta}(t,\tilde{x})^{1-p}(p-1)\int_{\tilde{x}}^{\infty}\!\!\exp(\tfrac{p\tilde{x}-py}{\eta})V'\big(W_{\eta}(t,y)\big)W_{\eta}^{p-1}(t,y)\big(\partial_{y}W_{\eta}(t,y)\big)^{2}\dd y\\
    &=W_{\eta}(t,\tilde{x})^{1-p}(p-1)\int_{\tilde{x}}^{\infty}\!\!\exp(\tfrac{p\tilde{x}-py}{\eta})V'\big(W_{\eta}(t,y)\big)W_{\eta}^{p-1}(t,y)\big(\partial_{y}W_{\eta}(t,y)\big)^{2}\dd y.
\end{align*}
This last term is, in general, strictly negative (as long as the solution is not constant for \(x\geq\tilde{x}\)) for \(p>1\) so that the maximal value of \(W_{\eta}(0,\cdot)\) is, for \(t\in\R_{>0}\), never attained and decreasing. Similarly, if \(\hat{x}\) is chosen so that we are at a point where the solution is minimal, we have the inequality
\[
\partial_{t}W_{\eta}(t,\hat{x})\geq W_{\eta}(t,\hat{x})^{1-p}(p-1)\int_{\hat{x}}^{\infty}\!\!\exp(\tfrac{p\hat{x}-py}{\eta})V'\big(W_{\eta}(t,y)\big)W_{\eta}^{p-1}(t,y)\big(\partial_{y}W_{\eta}(t,y)\big)^{2}\dd y.
\]
Thus, even at a minimal point, if \(p\in (1,\infty)\) then the solution will decay further. This might sound contradictory, however, following the maximum/minimum principle in \cref{theo:existence_uniqueness_maximum_principle}, the solution can never become smaller than \(q_{\min}\in\R_{>0}\). For \(p\in (0,1),\) quite the opposite is true: the minimum increases (but not beyond \(\|q_{0}\|_{L^{\infty}(\R)}\)) as well as the maximum decreasing.

Interestingly, the case \(p=1\) is ``equilibrated'' in the sense that this last signed term vanishes so that we end up with
\[
\partial_{t}W_{\eta}(t,\tilde{x})\leq 0 \text{ and } \partial_{t}W_{\eta}(t,\hat{x})\geq 0,
\]
i.e., for \(p=1\) (and only in this case (as long as the solution is not constant)) it indeed holds that
\[
\essinf_{\tilde{x}\in\R}W_{\eta}(t,\tilde{x})\leq W_{\eta}(t,x)\leq \|W_{\eta}(0,\cdot)\|_{L^{\infty}(\R)},\ (t,x)\in\OT \text{ a.e.}
\]
\end{rem}

\subsection{\texorpdfstring{\(TV\)}{TV} estimates on the nonlocal operator}
With the previous identity in \cref{lem:PDE_nonlocal,{rem:PDE_nonlocal_p}} for the \textit{exponential nonlocal term} in mind, we are well equipped to show total variation bounds for the \(p-\)th power of \(W_{\eta}\), \(W^{p}_{\eta}, p\in(0,\infty)\) uniformly in \(\eta\in\R_{>0}\). 
\begin{theo}[\(TV\) bounds on {\(W^{p}_{\eta}\equiv W_{p,\eta}^{p}[q,\exp(-\cdot)]\)}]\label{theo:TV_bounds_Wp}
Let \cref{ass:input_datum_and_velocity} and \cref{ass:exponential} hold. If either \begin{itemize}
    \item \(p\in\R_{\geq 1}\)
    \item or \(p<1\) and the initial datum satisfies \(\tfrac{\essinf_{x\in\R}q_{0}(x)}{\|q_{0}\|_{L^{\infty}(\R)}}\geq \sqrt[p]{1-p}\)
\end{itemize}
it holds that the total variation of \(W_{\eta}^{p}\coloneqq W_{p,\eta}[q,\exp(-\cdot)]^{p}\) as in \cref{lem:PDE_nonlocal} diminishes over time, uniformly in \(\eta\in\R_{>0}\), i.e. that
\[
\forall (\eta,t)\in\R_{>0}\times[0,T]: \quad \big|W_{\eta}^{p}(t,\cdot)\big|_{TV(\R)}\leq \big|W_{\eta}^{p}(0,\cdot)\big|_{TV(\R)}\leq|q_{0}^{p}|_{TV(\R)}.
\]
\end{theo}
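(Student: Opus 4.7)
The plan is to regularize $q_0$ via \cref{lem:approx} and the stability statement in \cref{theo:existence_uniqueness_maximum_principle} so that the manipulations may be carried out on classical solutions $q_\eta, W_\eta$ and then transferred to the $TV$-class at the end. Abbreviating $u_\eta\equiv W_\eta^p$, I would use the Cauchy problem for $u_\eta$ stated in \cref{eq:nonlocal_W_p} of \cref{rem:PDE_nonlocal_p} in the form $\partial_t u_\eta + V(W_\eta)\partial_x u_\eta = I_1+I_2$, differentiate once more in $x$, multiply by $\sgn(\partial_x u_\eta)$ and integrate over $\R$ to produce a formula for $\tfrac{d}{dt}|u_\eta(t,\cdot)|_{TV}$. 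The transport part yields the familiar cancellation: $-V'(W_\eta)(\partial_x W_\eta)(\partial_x u_\eta)\sgn(\partial_x u_\eta)$ is exactly offset after one integration by parts against $-V(W_\eta)\partial_x^2 u_\eta\sgn(\partial_x u_\eta)$. What survives are the two derivatives $\partial_x I_1, \partial_x I_2$, each of which splits by Leibniz into a boundary term at $y=x$ and a bulk term $\tfrac{p}{\eta}I_j$ coming from differentiating the exponential. A Fubini swap converts the bulk terms into expressions involving the weighted-average
\[
\beta(t,y)\coloneqq\tfrac{p}{\eta}\!\int_{-\infty}^{y}\!\exp\!\big(\tfrac{p(x-y)}{\eta}\big)\sgn(\partial_x u_\eta(t,x))\,\dd x\in[-1,1].
\]

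A careful algebraic rearrangement then shows that the $I_1$-block reduces to $-\tfrac{p}{\eta}\int(-V'(W_\eta))W_\eta|\partial_x u_\eta|(1-\beta\,\sgn(\partial_x u_\eta))\,\dd x$ and the $I_2$-block to $\tfrac{p-1}{p}\int(-V'(W_\eta))W_\eta^{1-p}(\partial_x u_\eta)|\partial_x u_\eta|(1-\beta\,\sgn(\partial_x u_\eta))\,\dd x$. The crucial structural point is that both contributions share the same nonnegative prefactor $(-V'(W_\eta))|\partial_x u_\eta|(1-\beta\,\sgn(\partial_x u_\eta))$, so their sum factors as
\[
\tfrac{d}{dt}|u_\eta(t,\cdot)|_{TV}=\!\int_{\R}\!(-V'(W_\eta))|\partial_x u_\eta|\big(1-\beta\,\sgn(\partial_x u_\eta)\big)\Big[\!-\tfrac{p}{\eta}W_\eta+\tfrac{p-1}{p}W_\eta^{1-p}\partial_x u_\eta\Big]\dd x,
\]
and the sign of the time derivative is determined entirely by the bracket. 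At this point I would invoke the pointwise identity $\partial_x u_\eta=\tfrac{p}{\eta}(W_\eta^p-q_\eta^p)$ — a direct rearrangement of \cref{eq:nonlocal_identity} — to simplify the bracket to $-\tfrac{1}{\eta}\big(W_\eta+(p-1)W_\eta^{1-p}q_\eta^p\big)$.

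For $p\geq 1$ this quantity is at most $-W_\eta/\eta<0$ and the $TV$-nonincreasingness follows without further restriction. For $p\in(0,1)$ the bracket is nonpositive precisely when $W_\eta^p\geq(1-p)q_\eta^p$, and the maximum principle in \cref{theo:existence_uniqueness_maximum_principle} (yielding $W_\eta\geq q_{\min}$ and $q_\eta\leq\|q_0\|_{L^\infty(\R)}$) reduces this to the stated hypothesis $q_{\min}/\|q_0\|_{L^\infty(\R)}\geq\sqrt[p]{1-p}$. The initial-datum bound $|W_\eta^p(0,\cdot)|_{TV}\leq|q_0^p|_{TV}$ then follows at once from writing $u_\eta(0,x)=\int_0^{\infty}\tfrac{p}{\eta}e^{-pz/\eta}q_0(x+z)^p\,\dd z$ as a probability-weighted average of translates of $q_0^p$ and using $TV$-subadditivity under convolution with a probability density. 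The main obstacle I anticipate is the careful sign and Fubini bookkeeping that isolates the \emph{shared} factor $(1-\beta\,\sgn(\partial_x u_\eta))$ in both the $I_1$- and $I_2$-blocks; once that factorization is recognized, the pointwise identity from \cref{lem:PDE_nonlocal} reduces the problem to the elementary sign analysis of a single scalar bracket.
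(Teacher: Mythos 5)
Your proposal is correct and follows essentially the same route as the paper: differentiate the $W^p_\eta$-PDE in $x$, multiply by $\sgn(\partial_x W^p_\eta)$, integrate, invoke the identity $\partial_x W^p_\eta=\tfrac{p}{\eta}(W^p_\eta-q^p_\eta)$, and reduce everything to the sign of $W^p_\eta+(p-1)q^p_\eta$, which the maximum principle controls under the stated hypotheses. The only cosmetic difference is that you keep the exact exponential average $\beta\in[-1,1]$ and thereby obtain the time derivative of the $TV$-seminorm as an equality with a signed integrand (and use the probability-density/translation argument for the $t=0$ bound), whereas the paper replaces the inner sign-average by its crude bound $|\,\cdot\,|\le 1$ and then observes that the two resulting terms cancel; the decisive sign condition and all invoked ingredients are identical.
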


\begin{proof}
    Before we start computing the \(TV\) estimate, we require the time and space derivatives of \(W_{\eta}^{p}\) with the notation introduced in \cref{lem:PDE_nonlocal}, following \cref{rem:PDE_nonlocal_p} and assuming again that the solutions are smooth which is possible thanks to \cref{lem:approx}. For \((t,x)\in\OT\), \(\eta\in\R_{>0}\ni p\) we take the spatial derivative in \cref{eq:nonlocal_W_p}
    \begin{equation}
    \begin{aligned}
\partial_{t}\partial_{x}W_{\eta}^{p}(t,x)&=-\tfrac{\dd}{\dd x}\big(V(W_{\eta}(t,x))\partial_{x}W_{\eta}^{p}(t,x)\big)+\tfrac{p^{2}}{\eta}V'\big(W_{\eta}(t,x)\big)\partial_{x}W_{\eta}(t,x)W_{\eta}(t,x)^{p}\\
&\quad-\tfrac{p^{3}}{\eta^{2}}\int_{x}^{\infty}\!\!\!\!\!\exp(\tfrac{px-py}{\eta})V'\big(W_{\eta}(t,y)\big)\,\partial_{y}W_{\eta}(t,y)\,W_{\eta}(t,y)^{p}\dd y\\
    &\quad-p(p-1)V'\big(W_{\eta}(t,x)\big)W_{\eta}(t,x)^{p-1}\big(\partial_{x}W_{\eta}(t,x)\big)^{2}\\
    &\quad +\tfrac{p^{2}(p-1)}{\eta}\int_{x}^{\infty}\!\!\exp(\tfrac{px-py}{\eta})V'\big(W_{\eta}(t,y)\big)\,W_{\eta}^{p-1}(t,y)\big(\partial_{y}W_{\eta}(t,y)\big)^{2}\dd y.
\end{aligned}
\label{eq:partial_t_x_W_p}
\end{equation}
With this identity, we can now estimate the temporal change in the total variation of \(W_{\eta}^{p}\) as follows
\begin{align*}
    &\tfrac{\dd}{\dd t}\int_{\R}\big|\tfrac{\dd}{\dd x} W^{p}(t,x)\big|\dd x=\int_{\R}\sgn\big(\tfrac{\dd}{\dd x}W_{\eta}(t,x)^{p}\big)\partial_{t}\partial_{x} W_{\eta}(t,x)^{p}\dd x.
    \intertext{Plugging in the previous identity \cref{eq:partial_t_x_W_p},}
    &= -\tfrac{p^{3}}{\eta^{2}}\int_{\R}\sgn\big(\tfrac{\dd}{\dd x}W_{\eta}(t,x)^{p}\big)\int_{x}^{\infty}\exp(\tfrac{px-py}{\eta})V'\big(W_{\eta}(t,y)\big)\partial_{y}W_{\eta}(t,y)W_{\eta}(t,y)^{p}\dd y \dd x\\
    &\quad+\tfrac{p^{2}}{\eta}\int_{\R}\sgn\big(\tfrac{\dd}{\dd x}W_{\eta}(t,x)^{p}\big)V'\big(W_{\eta}(t,x)\big)\partial_{x}W_{\eta}(t,x)W_{\eta}(t,x)^{p}\dd x\\
    &\quad +\tfrac{p^{2}(p-1)}{\eta}\int_{\R}\sgn\big(\tfrac{\dd}{\dd x}W_{\eta}(t,x)^{p}\big)\int_{x}^{\infty}\exp(\tfrac{px-py}{\eta})V'\big(W_{\eta}(t,y)\big)W_{\eta}^{p-1}(t,y)\big(\partial_{y}W_{\eta}(t,y)\big)^{2}\dd y\dd x\\
    &\quad -p(p-1)\int_{\R}\sgn\big(\tfrac{\dd}{\dd x}W_{\eta}(t,x)^{p}\big)V'\big(W_{\eta}(t,x)\big)W_{\eta}(t,x)^{p-1}\big(\partial_{x}W_{\eta}(t,x)\big)^{2} \dd x\\
    &\quad -p\int_{\R}\sgn\Big(\tfrac{\dd}{\dd x}W_{\eta}(t,x)^{p}\Big)\tfrac{\dd}{\dd x} \Big(V\big(W_{\eta}(t,x)\big)W_{\eta}(t,x)^{p-1}\partial_{x}W_{\eta}(t,x)\Big)\dd x.
    \intertext{Integrating by parts in the last term,}
    &= -\tfrac{p^{3}}{\eta^{2}}\int_{\R}\sgn\big(\tfrac{\dd}{\dd x}W_{\eta}(t,x)^{p}\big)\int_{x}^{\infty}\exp(\tfrac{px-py}{\eta})V'\big(W_{\eta}(t,y)\big)\partial_{y}W_{\eta}(t,y)W_{\eta}(t,y)^{p}\dd y \dd x\\
    &\quad+\tfrac{p}{\eta}\int_{\R}\big|\tfrac{\dd}{\dd x}W_{\eta}^{p}(t,x)\big|W_{\eta}(t,x)V'\big(W_{\eta}(t,x)\big)\dd x\\
    &\quad +\tfrac{p^{2}(p-1)}{\eta}\int_{\R}\sgn\big(\tfrac{\dd}{\dd x}W_{\eta}(t,x)^{p}\big)\int_{x}^{\infty}\exp(\tfrac{px-py}{\eta})V'\big(W_{\eta}(t,y)\big)W_{\eta}(t,y)^{p-1}\big(\partial_{y}W_{\eta}(t,y)\big)^{2}\dd y\dd x\\
    &\quad -(p-1)\int_{\R}\big|\tfrac{\dd}{\dd x}W_{\eta}^{p}(t,x)\big|V'\big(W_{\eta}(t,x)\big)\partial_{x}W_{\eta}(t,x)\dd x\\
    &\quad+\int_\R \delta\big(\tfrac{\dd}{\dd x}W_{\eta}(t,x)^{p}\big)
    V\big(W_\eta(t,x)\big) \tfrac{\dd}{\dd x}W_{\eta}(t,x)^{p}\partial_{x}^{2}W_{\eta}(t,x)^{p} \dd x
    \intertext{and, as the last term is zero, we obtain by combining the first and third and second and fourth terms}
    &= -\tfrac{p^{2}}{\eta^{2}}\int_{\R}\sgn\big(\tfrac{\dd}{\dd x}W_{\eta}(t,x)^{p}\big)\int_{x}^{\infty}\exp(\tfrac{px-py}{\eta})V'\big(W_{\eta}(t,y)\big)\partial_{y}W_{\eta}(t,y)\\
    &\quad\quad \cdot\Big(pW_{\eta}(t,y)^{p}-(p-1)\eta W_{\eta}(t,y)^{p-1}\partial_{y}W_{\eta}(t,y)\Big)\dd y \dd x\\
    &\quad+\tfrac{1}{\eta}\int_{\R}\big|\tfrac{\dd}{\dd x}W_{\eta}^{p}(t,x)\big|V'\big(W_{\eta}(t,x)\big)\big(pW_{\eta}(t,x)-(p-1)\eta \partial_{x}W_{\eta}(t,x)\big)\dd x.\\
    \intertext{Changing the order of integration, recalling \(V'\leqq\), and estimating the first term yields}
    &\leq -\tfrac{p^{2}}{\eta^{2}}\int_{\R}V'\big(W_{\eta}(t,y)\big)|\partial_{y}W_{\eta}(t,y)|\big|pW_{\eta}(t,y)^{p}-(p-1)\eta W_{\eta}(t,y)^{p-1}\,\partial_{y}W_{\eta}(t,y)\big|\\
    &\qquad\qquad \cdot\int_{-\infty}^{y}\big|\sgn\big(\tfrac{\dd}{\dd x}W_{\eta}(t,x)^{p}\big)\big|\exp(\tfrac{px-py}{\eta})\dd x\dd y\\
    &\quad+\tfrac{1}{\eta}\int_{\R}\big|\tfrac{\dd}{\dd x}W_{\eta}^{p}(t,x)\big|V'\big(W_{\eta}(t,x)\big)\big(pW_{\eta}(t,x)-(p-1)\eta \partial_{x}W_{\eta}(t,x)\big)\dd x.
    \intertext{Carrying out the integral in \(x\) as \(|\sgn(\cdot)|\equiv 1\) in the first term,}
    &\leq -\tfrac{p}{\eta}\int_{\R}V'(W_{\eta}(t,y))|\partial_{y}W_{\eta}(t,y)|\big|pW_{\eta}(t,y)^{p}-(p-1)\eta W_{\eta}(t,y)^{p-1}\partial_{y}W_{\eta}(t,y)\big|\dd y\\
    &\quad+\tfrac{1}{\eta}\int_{\R}\big|\tfrac{\dd}{\dd x}W_{\eta}^{p}(t,x)\big|V'\big(W_{\eta}(t,x)\big)\big(pW_{\eta}(t,x)-(p-1)\eta \partial_{x}W_{\eta}(t,x)\big)\dd x
    \intertext{and, as \(W_{\eta}\geqq 0\), we can use \(\big|\tfrac{\dd}{\dd x}W_{\eta}^{p}(t,x)\big|=pW^{p-1}_{\eta}(t,x)|\partial_{x} W_{\eta}(t,x)|,\ (t,x)\in\OT\) in the second term}
    &= -\tfrac{p}{\eta}\int_{\R}|\partial_{y}W_{\eta}(t,y)|V'\big(W_{\eta}(t,y)\big)\big|pW_{\eta}(t,y)^{p}-(p-1)\eta W_{\eta}(t,y)^{p-1}\partial_{y}W_{\eta}(t,y)\big|\dd y\\
    &\quad+\tfrac{p}{\eta}\int_{\R}\big|\partial_{x}W_{\eta}(t,x)\big|V'\big(W_{\eta}(t,x)\big)\big(pW_{\eta}(t,x)^{p}-(p-1)\eta W_{\eta}^{p-1}(t,x)\partial_{x}W_{\eta}(t,x)\big)\dd x.
    \intertext{Assume for now that \(pW_{\eta}^{p}-(p-1)\eta W_{\eta}^{p-1}\partial_{2}W_{\eta}\geqq 0\), which will be justified below we continue.}
    &= -\tfrac{p}{\eta}\int_{\R}|\partial_{y}W_{\eta}(t,y)|V'\big(W_{\eta}(t,y)\big)\big(pW_{\eta}(t,y)^{p}-(p-1)\eta W_{\eta}(t,y)^{p-1}\partial_{y}W_{\eta}(t,y)\big)\dd y\\
    &\quad+\tfrac{p}{\eta}\int_{\R}\big|\partial_{x}W_{\eta}(t,x)\big|V'\big(W_{\eta}(t,x)\big)\big(pW_{\eta}(t,x)^{p}-(p-1)\eta W_{\eta}^{p-1}(t,x)\partial_{x}W_{\eta}(t,x)\big)\dd x\\
    &=0.
\end{align*}   
This proves the \(TV\) diminishing property in the case that 
\[
pW_{\eta}^{p}-(p-1)\eta W_{\eta}^{p-1}\partial_{2}W_{\eta}\geqq 0 \text{ on } \OT.
\]
Using the exponential identity in \cref{eq:nonlocal_identity}, this can be reformulated as
\[
 W_{\eta}^{p}+(p-1)W_{\eta}^{p}-(p-1)\eta W_{\eta}^{p-1}\partial_{2}W_{\eta}\geqq 0 \Longleftrightarrow W_{\eta}^{p}+(p-1)q_{\eta}^{p}\geqq 0 \text{ on } \OT.
\]
Thanks to \cref{theo:existence_uniqueness_maximum_principle}, this inequality is always satisfied for \(p\geq 1\), which is guaranteed by the postulated assumption. For \(p<1\), we can perform a ``worst-case'' estimate when taking advantage of the previously derived min/max principle in \cref{theo:existence_uniqueness_maximum_principle}, namely in \cref{eq:min_max}  to arrive at
\[
W_{\eta}^{p}\geqq (1-p)q_{\eta}^{p}\Longleftarrow \essinf_{x\in\R}q_{0}(x)^{p}\geqq (1-p)\|q_{0}\|_{L^{\infty}(\R)}^{p}\Longleftrightarrow \tfrac{\essinf_{x\in\R}q_{0}(x)}{\|q_{0}\|_{L^{\infty}(\R)}}\geq \sqrt[p]{1-p}.
\]
This is exactly the requirement that we postulated in the assumptions of the proof for \(p\in (0,1)\).
As, in both cases \(p\geq 1,\ 0<p<1,\) the time-derivative of the total variation is less than or equal to zero, we obtain 
\[
\big|W_{\eta}^{p}(t,\cdot)\big|_{TV(\R)}\leq \big|W_{\eta}^{p}(0,\cdot)\big|_{TV(\R)}.
\]
Recalling the exponential operator, we have, for \(t=0\)
\begin{equation}
\begin{aligned}
    \int_{\R}\big|\partial_{x}W_{\eta}^{p}(0,x)\big|\dd x&=\int_{\R}\Big|\partial_{x}\tfrac{p}{\eta}\int_{x}^{\infty}\exp\big(\tfrac{px-py}{\eta}\big)q_{0}(y)^{p}\dd y\Big|\dd x=\int_{\R}\Big|\tfrac{p}{\eta}\int_{x}^{\infty}\exp\big(\tfrac{px-py}{\eta}\big)\tfrac{\dd}{\dd y}q_{0}(y)^{p}\dd y\Big|\dd x\\
    &\leq \int_{\R}\tfrac{p}{\eta}\int_{x}^{\infty}\exp\big(\tfrac{px-py}{\eta}\big)\big|\tfrac{\dd}{\dd y}q_{0}(y)^{p}\big|\dd y\dd x
    =\int_{\R}\big|\tfrac{\dd}{\dd y}q_{0}(y)^{p}\big|\tfrac{p}{\eta}\int_{-\infty}^{y}\exp\big(\tfrac{px-py}{\eta}\big)\dd x\dd y\\
    &=|q_{0}^{p}|_{TV(\R)}\leq \begin{cases} p\|q_{0}\|^{p-1}_{L^{\infty}(\R)}|q_{0}|_{TV(\R)}& \text{ for } p\geq 1\\
    p\|\tfrac{1}{q_{0}}\|_{L^{\infty}(\R)}^{1-p} |q_{0}|_{TV(\R)} &\text{ for } p\leq 1
    \end{cases}
\end{aligned}
\label{eq:uniform_TV_bound_initial_uniform_eta}
\end{equation}
which is uniform in \(\eta\) as claimed and depends only on the \(L^{\infty}\)-norm and \(TV\)-semi-norm of \(q_{0}\), which is bounded as guaranteed in \cref{ass:input_datum_and_velocity}.
\end{proof}
\begin{rem}[\(TV\) bounds on the nonlocal term, zero initial data and the \(p<1\) case]\label{rem:TV_bounds_generalization}
Note that in the proof of \cref{theo:TV_bounds_Wp}, we would not require any further assumption on the initial datum being bounded away from zero, if \(p\geq 1\). This is due to the fact that we were studying the \(TV\) estimate for \(W_{\eta}^{p}\) which behaves better than \(W_{\eta}\) itself.
This fact will play a crucial role in \cref{sec:existence_zero_initial_datum} when we consider, for \(p\in[1,\infty)\), the existence of solutions also for an initial datum which can be zero.

The same argumentation is not true, however, for the case \(0<p<1\) because the assumption required on the initial datum is rather restrictive and prevents the initial datum from being zero to have \(TV\) estimates uniformly in \(\eta\). This can also be seen by the latter derived uniform bound in \cref{eq:uniform_TV_bound_initial_uniform_eta} on the initial datum, which would not hold if \(p<1\) and \(\essinf_{x\in\R}q_{0}=0\).
It is also worth mentioning that the result in \cref{theo:TV_bounds_Wp} might be generalizable to broader kernel classes following the argument in \cite{Marconi2023,keimerpflug23fixed}. As we are mainly focused on ``first results'' regarding the \(p-\)norm, we will not look further into such a generalization here.

Finally, the total variation bound of \(W_{\eta}^{p}\) implies also a total variation bound on \(W_{\eta}\) as, for \(t\in(0,T)\) and \(p\in(1,\infty)\), it holds that
\begin{equation}
\begin{split}
 |W_{\eta}(t,\cdot)|_{TV(\R)}&=\int_{\R}|\partial_{x}W_{\eta}(t,x)|\dd x=\tfrac{1}{p}\int_{\R}\bigg|\tfrac{pW_{\eta}^{p-1}(t,x)}{W_{\eta}^{p-1}(t,x)}\partial_{x}W_{\eta}(t,x)\bigg|\dd x\\
 &\leq \tfrac{1}{pq_{\min}^{p-1}}\int_{\R}\big|pW_{\eta}(t,x)^{p-1}\partial_{x} W_{\eta}(t,x)\big|\dd x=\tfrac{1}{pq_{\min}^{p-1}}\int_{\R}|\partial_{x}W_{\eta}(t,x)^{p}|\dd x=\tfrac{|W_{\eta}^{p}(t,\cdot)|_{TV(\R)}}{pq_{\min}^{p-1}}.
\end{split}
\label{rem:total_variation_bound_W}
\end{equation}
This is guaranteed by \(q_{\eta},W_{\eta}\) being bounded away from zero by \(q_{\min}\) as assumed in \cref{ass:input_datum_and_velocity} and guaranteed in \cref{theo:existence_uniqueness_maximum_principle}.
\end{rem}

So far, we have a uniform \(TV\) estimate in space for the nonlocal operator's \(p\)th power, but any time regularity, and compactness with respect to both space and time variables, are missing. These are provided by \cref{cor:compactnessWp}.
\begin{cor}[Compactness of {\(W^{p}_{\eta}\) in \(C([0,T];L^{1}_{\loc}(\R))\)}] \label{cor:compactnessWp}
Under the assumptions of \cref{theo:TV_bounds_Wp} on \(p\in\R_{>0}\) and on the initial datum \(q_{0}\), we obtain for all \(\Omega\subset\R\) open and bounded that 
\[
\Big\{W^{p}_{\eta}\big|_{(0,T)\times\Omega}\in C\big([0,T];L^{1}(\Omega)\big):\ \eta\in\R_{>0}\Big\}\overset{\text{c}}{\hookrightarrow} C\big([0,T];L^{1}(\Omega)\big),
\]
i.e., the set \(\{W^{p}_{\eta}:\ \eta\in\R_{>0}\}\) is compactly embedded into the space \(C\big([0,T];L^{1}(\Omega)\big)\).
\end{cor}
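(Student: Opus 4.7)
The plan is to apply the Arzelà--Ascoli theorem in $C([0,T];L^1(\Omega))$. Two ingredients must be verified, uniformly in $\eta\in\R_{>0}$: (i) pointwise-in-$t$ relative compactness of the family $\{W^p_\eta(t,\cdot)\}_\eta$ in $L^1(\Omega)$, and (ii) equicontinuity of $t\mapsto W^p_\eta(t,\cdot)$ in $L^1(\Omega)$.

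For (i), I would combine the uniform spatial $TV$ bound from \cref{theo:TV_bounds_Wp} with the $L^\infty$ bound on $W^p_\eta$ coming from the maximum principle of \cref{theo:existence_uniqueness_maximum_principle} (using that $W_\eta\in[q_{\min},\|q_0\|_{L^\infty(\R)}]$). Since each slice $W^p_\eta(t,\cdot)$ then lies in a bounded subset of $BV(\Omega)\cap L^\infty(\Omega)$, Helly's selection theorem yields the desired relative compactness in $L^1(\Omega)$.

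For (ii), the strategy is to prove that $\partial_t W^p_\eta$ is bounded in $L^\infty(0,T;L^1(\Omega))$ uniformly in $\eta$, which immediately implies the uniform Lipschitz-in-time continuity. Appealing to the smooth approximation provided by \cref{theo:existence_uniqueness_maximum_principle} and \cref{lem:approx}, I would work with the PDE \cref{eq:nonlocal_W_p} and estimate its three summands separately in $L^1(\Omega)$. The transport term $V(W_\eta)\partial_x W^p_\eta$ is controlled by $\|V\|_{L^\infty}\,|W^p_\eta|_{TV(\R)}$. For the first nonlocal integral, Fubini combined with the kernel normalization $\int_{-\infty}^{y}\tfrac{p}{\eta}\exp(\tfrac{p(x-y)}{\eta})\dd x=1$ reduces its $L^1(\Omega)$-norm to a constant multiple of $|W_\eta|_{TV(\R)}$.

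The main obstacle is the third summand, which carries the quadratic factor $(\partial_y W_\eta)^2$ and admits no obvious $\eta$-uniform control. The key trick is to revisit the pointwise identity \cref{eq:nonlocal_identity} from \cref{lem:PDE_nonlocal}, which rearranged reads $\eta\,\partial_x W_\eta=W_\eta^{1-p}(W^p_\eta-q^p_\eta)$. Combined with the min/max principle, this yields the uniform estimate $\eta\,|\partial_x W_\eta|\le C$ independent of $\eta$. After Fubini, one factor of $\eta$ therefore absorbs one factor of $\partial_y W_\eta$, and what remains is $\int_\R|\partial_y W_\eta|\,\dd y=|W_\eta|_{TV(\R)}$, which is itself uniformly bounded thanks to \cref{rem:TV_bounds_generalization}; the strict positivity $q_\eta\ge q_{\min}$ enters essentially at this final step. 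Assembling the three estimates gives the uniform Lipschitz bound, and Arzelà--Ascoli concludes the argument.
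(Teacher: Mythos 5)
Your proposal is correct and takes essentially the same route as the paper: pointwise-in-time compactness via the uniform $TV$ bound of \cref{theo:TV_bounds_Wp} plus the $L^\infty$ bound, equicontinuity in time via the PDE \cref{eq:nonlocal_W_p} with the three summands estimated separately, and—crucially—the identity \cref{eq:nonlocal_identity} together with the min/max principle to bound $\eta\,\partial_x W_\eta$ uniformly so that the quadratic term can be controlled. The paper cites Simon's lemma rather than Arzelà--Ascoli and states the final estimate in terms of $|W_\eta^p|_{TV}$ rather than $|W_\eta|_{TV}$, but these are equivalent under the lower bound $q_{\min}>0$, so the argument is the same in substance.
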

\begin{proof}
We consider the Banach space \(L^1(\Omega)\) with \(\Omega \subset \R\) open bounded and, for $t\in [0,T]$, the set of functions
\[F(t)\coloneqq \{W_{\eta}^p(t, \cdot)\in L^1(\Omega),\,\eta\in\R_{>0}\}.
\] 
This set is, according to the classical argument that \(BV\) is compactly embedded in \(L^{1}\) as stated in \cite[Theorem 13.35]{leoni} for all \(t\in[0,T]\), compact in \(L^1(\Omega)\) thanks to the uniform (in \(\eta\in\R_{>0}\)) total variation bounds as demonstrated in \cref{theo:TV_bounds_Wp}.
According to \cite[Lemma 1]{Simon1986}, demonstrating the compactness in space-time in \(C([0,T];L^{1}(\Omega))\) for \(\Omega\subset\R\) open and bounded boils down to showing equicontinuity in time of the set of functions considered, and is what we will prove next: 
Let \((t_{1},t_{2})\in [0,T]^{2}\) be given and assume, without loss of generality, that \(t_{1}\geq t_{2}\). We obtain
    \begin{align}
        &\big\|W_{\eta}(t_{1},\cdot)^{p}-W_{\eta}(t_{2},\cdot)^{p}\big\|_{L^{1}(\R)}\notag\\
        &\leq \int_{\R}\int_{t_{2}}^{t_{1}}|\partial_{t}W_{\eta}(s,x)^{p}|\dd s\dd x\notag
        \intertext{using \cref{rem:PDE_nonlocal_p}, i.e., the dynamics derived for \(W_{\eta}^{p}\),}
        &\leq \int_{\R}\int_{t_{2}}^{t_{1}}\big|V\big(W_{\eta}(s,x)\big)\partial_{x}W_{\eta}(s,x)^{p}\big|\dd s\dd x\notag\\
        &\quad +\tfrac{p^{2}}{\eta}\int_{\R}\int_{t_{2}}^{t_{1}}\!\!\int_{x}^{\infty}\exp\big(\tfrac{px-py}{\eta}\big)\big|V'\big(W_{\eta}(s,y)\big)\partial_{y}W_{\eta}(s,y) W_{\eta}(s,y)^{p}\big|\dd y\dd s\dd x\notag\\
        &\quad +\tfrac{p(p-1)}{\eta}\int_{\R}\int_{t_{2}}^{t_{1}}\!\!\int_{x}^{\infty}\exp\big(\tfrac{px-py}{\eta}\big)\big|V'\big(W_{\eta}(s,y)\big)W_{\eta}(s,y)^{p-1}\partial_{y}W_{\eta}(s,y)\eta \partial_{y}W_{\eta}(s,y)\big|\dd s\dd y\dd x.\notag
        \intertext{Exchanging the order of integration in the second and third term, rearranging terms and pulling out \(V,V'\) uniformly (which is possible thanks to the maximum principle in \cref{theo:existence_uniqueness_maximum_principle}) yields}
        &\leq \|V\|_{L^{\infty}((0,\|q_{0}\|_{L^{\infty}(\R)}))}|t_{2}-t_{1}|\big|W_{\eta}^{p}\big|_{L^{\infty}((0,T);TV(\R))}\notag\\
        &\quad +\tfrac{p}{\eta}\|V'\|_{L^{\infty}((0,\|q_{0}\|_{L^{\infty}(\R)}))}\int_{t_{2}}^{t_{1}}\int_{\R}\big|\partial_{y}W_{\eta}(s,y)^{p}\big| W_{\eta}(s,y)\int_{-\infty}^{y}\exp\big(\tfrac{px-py}{\eta}\big)\dd x\dd y\dd s\notag\\
        &\quad +\tfrac{|p-1|}{\eta}\|V'\|_{L^{\infty}((0,\|q_{0}\|_{L^{\infty}(\R)}))}\int_{t_{2}}^{t_{1}}\int_{\R}\big|\partial_{y}W_{\eta}(s,y)^{p}\big||\eta \partial_{y}W_{\eta}(s,y)|\int_{-\infty}^{y}\exp\big(\tfrac{px-py}{\eta}\big)\dd y\dd x\dd s\notag\\
        &= \|V\|_{L^{\infty}((0,\|q_{0}\|_{L^{\infty}(\R)}))}|t_{2}-t_{1}|\big|W_{\eta}^{p}\big|_{L^{\infty}((0,T);TV(\R))}\notag\\
        &\quad +\|V'\|_{L^{\infty}((0,\|q_{0}\|_{L^{\infty}(\R)}))}\int_{t_{2}}^{t_{1}}\int_{\R}\big|\partial_{y}W_{\eta}(s,y)^{p}\big| W_{\eta}(s,y)\dd y\dd s\notag\\
        &\quad +\tfrac{|p-1|}{p}\|V'\|_{L^{\infty}((0,\|q_{0}\|_{L^{\infty}(\R)}))}\int_{t_{2}}^{t_{1}}\int_{\R}\big|\partial_{y}W_{\eta}(s,y)^{p}\big||\eta \partial_{y}W_{\eta}(s,y)|\dd x\dd s\notag\\
        &\leq \|V\|_{L^{\infty}((0,\|q_{0}\|_{L^{\infty}(\R)}))}|t_{2}-t_{1}|\big|W_{\eta}^{p}\big|_{L^{\infty}((0,T);TV(\R))}\label{eq:time_compactness_1}\\
        &\quad +\|V'\|_{L^{\infty}((0,\|q_{0}\|_{L^{\infty}(\R)}))}|t_{2}-t_{1}|\|W_{\eta}\|_{L^{\infty}((0,T);L^{\infty}(\R))}\big|W_{\eta}^{p}\big|_{L^{\infty}((0,T);TV(\R))}\\
        &\quad +\tfrac{|p-1|}{p}\|V'\|_{L^{\infty}((0,\|q_{0}\|_{L^{\infty}(\R)}))}|t_{2}-t_{1}|\|\eta \partial_{2}W_{\eta}\|_{L^{\infty}((0,T);L^{\infty}(\R))}\big|W_{\eta}^{p}\big|_{L^{\infty}((0,T);TV(\R))}.\label{eq:time_compactness_3}
    \end{align}
    According to \cref{theo:TV_bounds_Wp}, the total variation  on \(W_{\eta}^{p}\) is uniformly bounded in \(\eta\). 
    Thanks to the maximum principle in \cref{theo:existence_uniqueness_maximum_principle}, we have also that the following estimate holds uniformly in \(\eta\)
    \[
    \|W_{\eta}\|_{L^{\infty}((0,T);L^{\infty}(\R))}\leq \|q_{0}\|_{L^{\infty}(\R)}.
    \]
    
Finally, the term \(\eta\partial_{x}W_{\eta}\) is also uniformly bounded as long as the initial datum \(q_0\) is bounded away from zero. This can be understood when recalling \cref{eq:nonlocal_identity} and \cref{ass:input_datum_and_velocity}, as we then have
    \[
\eta \partial_{2}W_{\eta}\equiv\tfrac{W_{\eta}^{p}-q_{\eta}^{p}}{W_{\eta}^{p-1}}\implies \|\eta \partial_{2}W_{\eta}\|_{L^{\infty}((0,T);L^{\infty}(\R))}\leq \max\Big\{\|q_{0}\|_{L^{\infty}(\R)},\tfrac{\|q_{0}\|_{L^{\infty}(\R)}^{p}}{q_{\min}^{p-1}}\Big\}.
    \]
    All three summands in \crefrange{eq:time_compactness_1}{eq:time_compactness_3} are uniformly bounded with respect to \(\eta\) and converge to zero for \(|t_{1}-t_{2}|\rightarrow0\), so that we obtain even Lipschitz continuity with respect to \(|t_{1}-t_{2}|\). Altogether, this gives the claimed compactness in \(C\big([0,T];L^{1}_{\text{loc}}(\R)\big)\).
\end{proof}
So far, we have only looked into \(W_{\eta}^{p}\), even though the crucial term in the nonlocal conservation law in \cref{defi:model_class} is actually \(W_{\eta}\). This is why we will establish, in the following theorem, compactness also for the set of nonlocal terms and solutions in \(C([0,T];L^{1}(\Omega))\) for a given \(\Omega\subset\R\) open and bounded.

\begin{theo}[Convergence in \(C(L^{1}_{\text{loc}})\) of the nonlocal operator and the nonlocal solution]
\label{thm:strong_convergence}
Let the assumptions of \cref{theo:TV_bounds_Wp} hold, i.e., that \cref{ass:input_datum_and_velocity},\ \cref{ass:exponential} and either
\begin{itemize}
    \item \(p\in\R_{\geq 1}\)
    \end{itemize}
    or
    \begin{itemize}
    \item \(p<1\) and the initial datum satisfying \(\tfrac{\essinf_{x\in\R}q_{0}(x)}{\|q_{0}\|_{L^{\infty}(\R)}}\geq \sqrt[p]{1-p}\)
\end{itemize}
being satisfied. Then, for every sequence \(\{\eta_k\}_{k\in \N}\subset \R_{>0}\) with \(\displaystyle{\lim_{k\to\infty}}\eta_k=0,\) there exists a subsequence (again denoted by \(\{\eta_k\}_{k\in \N}\) for convenience) and a function \(q_\ast\in C\big([0,T]; L^1_{\text{loc}}(\R)\big)\) such that 
\[
\lim_{k\rightarrow\infty}\|q_{\eta_k}-q_{*}\|_{C([0,T];L^{1}_{\text{loc}}(\R))}=0\ \text{ and } \ \lim_{k\rightarrow\infty}\|W_{\eta_{k}}-q_{*}\|_{C([0,T];L^{1}_{\text{loc}}(\R))}=0 \text{ in \cref{defi:model_class}},
\]
i.e., the nonlocal term and nonlocal solution converge in \(C\big([0,T];L^{1}_{\text{loc}}(\R)\big)\) to \(q_\ast\in C\big([0,T];L^{1}_{\text{loc}}(\R)\big)\).
\end{theo}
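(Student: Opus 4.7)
The plan is to first extract a subsequence along which $W_{\eta_k}^p$ converges in $C([0,T];L^1_{\loc}(\R))$, then transfer the convergence to $q_{\eta_k}^p$ via the identity \cref{eq:nonlocal_identity}, and finally lift from the $p$-th powers back to $W_{\eta_k}$ and $q_{\eta_k}$ themselves by using that $s\mapsto s^{1/p}$ is Lipschitz on intervals bounded away from zero. The common limit will be $q_{*}:=(W^{*})^{1/p}$, where $W^{*}$ is the limit of $W_{\eta_k}^p$.

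For the first step I would invoke \cref{cor:compactnessWp}, which yields relative compactness of $\{W_\eta^p:\eta\in\R_{>0}\}$ in $C([0,T];L^1(\Omega))$ for every bounded open $\Omega\subset\R$. A standard Cantor diagonal extraction along the exhaustion $\Omega_n=(-n,n)\uparrow\R$ produces a (non-relabeled) subsequence $\{\eta_k\}$ and a function $W^{*}\in C([0,T];L^1_{\loc}(\R))$ with $W_{\eta_k}^p\to W^{*}$ in $C([0,T];L^1_{\loc}(\R))$. The maximum principle from \cref{theo:existence_uniqueness_maximum_principle} moreover forces $W^{*}(t,x)\in[q_{\min}^p,\|q_0\|_{L^\infty(\R)}^p]$ almost everywhere, an inclusion that will be needed for the Lipschitz lifting step.

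The key calculation is next. Rewriting the algebraic identity \cref{eq:nonlocal_identity} as
\[
q_\eta^p-W_\eta^p=-\eta W_\eta^{p-1}\,\partial_x W_\eta=-\tfrac{\eta}{p}\,\partial_x W_\eta^p,
\]
and combining with the uniform-in-$\eta$ total-variation bound of \cref{theo:TV_bounds_Wp} gives
\[
\sup_{t\in[0,T]}\|q_\eta^p(t,\cdot)-W_\eta^p(t,\cdot)\|_{L^1(\R)}\leq\tfrac{\eta}{p}\,|q_0^p|_{TV(\R)}\xrightarrow{\eta\to 0}0,
\]
which is stronger than $C([0,T];L^1_{\loc}(\R))$ convergence and therefore yields $q_{\eta_k}^p\to W^{*}$ in that topology as well.

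Finally, since both $W_{\eta_k}$ and $q_{\eta_k}$ take values in the compact interval $[q_{\min},\|q_0\|_{L^\infty(\R)}]$ (bounded away from zero by the maximum principle), the map $s\mapsto s^{1/p}$ is Lipschitz there with some constant $L$, so the estimate $\|W_{\eta_k}(t,\cdot)-q_{*}(t,\cdot)\|_{L^1(\Omega)}\leq L\|W_{\eta_k}^p(t,\cdot)-W^{*}(t,\cdot)\|_{L^1(\Omega)}$ holds for every bounded $\Omega\subset\R$ and uniformly in $t\in[0,T]$, and the analogous estimate holds for $q_{\eta_k}$. Defining $q_{*}:=(W^{*})^{1/p}$ then gives the claimed convergence for both sequences. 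I do not foresee a genuine obstacle: the only ingredient beyond the already-available compactness and the Lipschitz lifting is the elementary rewriting of $q_\eta^p-W_\eta^p$ as $-\tfrac{\eta}{p}\partial_x W_\eta^p$, which turns the difference into a quantity of order $\eta$ uniformly in $(t,x)\in\OT$.
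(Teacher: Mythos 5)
Your proof is correct and follows essentially the same route as the paper's: compactness of $\{W_\eta^p\}$ from \cref{cor:compactnessWp}, the identity $q_\eta^p - W_\eta^p = -\tfrac{\eta}{p}\partial_x W_\eta^p$ combined with the uniform $TV$ bound from \cref{theo:TV_bounds_Wp} to close the gap, and Lipschitz lifting of $s\mapsto s^{1/p}$ on $[q_{\min}^p,\|q_0\|_{L^\infty}^p]$. The only slip is the closing phrase ``a quantity of order $\eta$ uniformly in $(t,x)\in\OT$'': the estimate you proved is an $L^1(\R)$ bound uniform in $t$, not a pointwise-in-$x$ bound, but this does not affect the argument.
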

\begin{proof}
Thanks to the compactness in \(C\big([0,T];L^1_{\text{loc}}(\R)\big)\) of the set of nonlocal terms \(W^p_\eta\) in \cref{cor:compactnessWp}, we know that there exists a limit function \(q_\ast^{p}\in C\big([0,T];L^1_{\text{loc}}(\R)\big)\) such that 
\begin{equation*}
    \displaystyle{\lim_{k\to \infty}} \|W^p_{\eta_k}-q_\ast^{p} \|_{C([0,T]; L^1_{\text{loc}}(\R))}=0.
\end{equation*}
The \(p\)th power can be chosen here as limit as it is nonnegative due to \(q_{\eta},W_{\eta}\geqq 0\).

Thanks to the identity in \cref{eq:nonlocal_identity} \(q^p_{\eta}\equiv W^p_{\eta}-\eta W^{p-1}\partial_{2}W_{\eta}\ \forall\eta\in\R_{>0}\), we can write, for \(t\in[0,T]\),
\begin{align*}
    \|W^p_{\eta_k}(t,\cdot)- q^p_{\eta_k}(t,\cdot)\|_{L^1(\R)}\leq \tfrac{\eta_k}{p} |W_{\eta_{k}}^{p}|_{L^{\infty}((0,T);TV(\R))}
    \intertext{and, according to \cref{theo:TV_bounds_Wp},}
    \leq \tfrac{\eta_k}{p} \|W_{\eta_k}(0,\cdot)\|_{L^{\infty}((0,T);TV(\R))}\leq \eta_{k}\begin{cases} \|q_{0}\|^{p-1}_{L^{\infty}(\R)}|q_{0}|_{TV(\R)}& \text{ for } p\geq 1\\
    \big\|\tfrac{1}{q_{0}}\big\|_{L^{\infty}(\R)}^{1-p} |q_{0}|_{TV(\R)} &\text{ for } p\leq 1.
    \end{cases}
\end{align*}
The right hand side goes to zero for \(\eta_{k}\rightarrow 0\) 
and as \(W_{\eta_{k}}^{p}\) converges to a limit \(q^{p}_{*}\), so does \(q^{p}_{\eta_{k}}\).
From this and the Lipschitz continuity of \(x\mapsto x^{\frac{1}{p}}\) for \(x\in\R_{\geq \eps}\) and an \(\eps\in\R_{>0}\), the convergence of \(q_{\eta_{k}}, W_{\eta_{k}}\) to \(q_{*}\) in \(C\big([0,T];L^{1}_{\text{loc}}(\R)\big)\) follows.
\end{proof}

\subsection{The corresponding local conservation law and results on existence and uniqueness}
As we aim for the singular limit convergence, we first state against which solution we expect convergence to. Looking into the nonlocal dynamics in \cref{eq:pnorm_problem}, it becomes apparent that, when the nonlocal kernel converges to a Dirac distribution, we formally end up with the following (local) conservation law.
\begin{defi}[The corresponding local conservation laws]\label{defi:local_conservation_law}
We call the following local conservation law subject to the velocity \(V\in W^{1,\infty}_{\text{loc}}(\R)\) and the initial condition \(q_{0}\in L^{\infty}(\R)\cap TV(\R)\) satisfying \cref{ass:input_datum_and_velocity}
\begin{equation}
\begin{aligned}
q_{t}(t,x)+\partial_{x}\big(V\big( q(t,x)\big) q(t,x)\big)&=0,&& (t,x)\in\OT,\\
q(0,x)      &=q_{0}(x),      && x\in\R.
\end{aligned}
    \label{eq:pnorm_local_problem}
\end{equation}
the \textbf{corresponding} (to the nonlocal) local conservation law.
\end{defi}
The theory on local conservation laws is quite extensive, and in the following we present some rather standard results concerning existence and uniqueness and proper definition of entropy solutions which we require in our later established analysis.
We start with weak solutions although they are not necessarily unique.
\begin{defi}[Weak solutions for local conservation laws]\label{defi:weak_solution_local}
  We call, for \(T\in\R_{>0}\), the function \(q\in C\big([0,T];L^{1}_{\text{loc}}(\R)\big)\) a weak solution to the Cauchy problem in \cref{defi:local_conservation_law} given that \(V\in W^{1,\infty}_{\text{loc}}(\R)\) and the initial condition \(q_{0}\in L^{\infty}(\R)\cap TV(\R)\) iff \(\forall\phi\in C^{1}_{\text{c}}((-42,T)\times\R)\) it holds that
  \begin{gather}
      \iint_{\OT}\Big(\phi_{t}(t,x)+V\big(q(t,x)\big)\phi_{x}(t,x)\Big)q(t,x)\dd x\dd t+\int_{\R} q_{0}(x)\phi(0,x)\dd x=0.\label{eq:weak_solution_local}
  \end{gather}
     
\end{defi}
As weak solutions for local conservation laws are typically not unique, an additional condition is prescribed which is usually referred to as entropy condition:
\begin{defi}[Entropy condition for the local conservation law.] \label{def:entropysol_local}
Let us consider for the local conservation law as in \cref{defi:local_conservation_law}
\begin{equation}
    \alpha \in C^2(\R) \text{ convex, } \beta'\equiv\alpha' f' \text{ with } f\equiv(\cdot) V(\cdot) \text{ on } \R. \label{eq:entropy_flux_pair}
\end{equation}
Then, we call for $\varphi \in C^1_\text{c}((-42,T)\times \R; \R_{\geq 0})$ and \(q\in C\big([0,T]; L^1_{\text{loc}}(\R)\big)\), the expression
\begin{align*}
    \mathcal{EF}[\varphi, \alpha, q]\coloneqq\iint_{\Omega_T} \alpha\big(q(t,x)\big) \varphi_t(t,x)+ \beta\big(q(t,x)\big) \varphi_x(t,x) \dd x \dd t + \int_\R \alpha\big(q_0(x)\big) \varphi(0,x) \dd x
\end{align*}
the \textbf{entropy condition}.
Furthermore, $q\in C\left([0,T]; L^1_{loc}(\R; \R)\right)$ is called an entropy solution of the local conservation law in \cref{defi:local_conservation_law} if it satisfies 
\begin{equation}
    \mathcal{EF}[\varphi, \alpha, q]\geq 0 \quad \forall \varphi \in C^1_{\text{c}}\left((-42,T)\times \R; \R_{\geq 0}\right),\  \forall (\alpha,\beta)\in C^{2}(\R) \text{ as in \cref{eq:entropy_flux_pair}}.\label{eq:entropy_condition}
\end{equation} 
\end{defi}
With the entropy condition in hand,  we recall a well known result for local conservation laws: the existence and uniqueness of entropy solutions.
\begin{theo}[Existence, Uniqueness \& Maximum principle for the local conservation law]\label{theo:local_existence_uniqueness_max_principle}
Let \(V\in W^{1,\infty}_{\text{loc}}(\R)\) and the initial condition \(q_{0}\in L^{\infty}(\R)\cap TV(\R)\) hold. Then, there exists a unique, weak entropy solution \(q\in C\left([0,T]; L^1_{\text{loc}}(\R)\right)\cap L^\infty \left((0,T); L^\infty(\R)\right)\) in the sense of \cref{def:entropysol_local} of the local conservation law as stated in \cref{defi:local_conservation_law}, namely \cref{eq:pnorm_local_problem}. In addition, the following maximum principle holds:
\begin{equation*}
    \essinf_{x\in\R} q_{0}(x)\leq q(t,x)\leq \|q_{0}\|_{L^{\infty}(\R)}, \quad (t,x)\in \Omega_T \text{ a.e. }
\end{equation*}
Additionally, if the flux \(f\equiv (\cdot) V(\cdot)\) is strictly concave, uniqueness is guaranteed if \cref{eq:entropy_condition} holds for \textbf{one} strictly convex entropy \(\alpha\) (and corresponding \(\beta\)) only.
\end{theo}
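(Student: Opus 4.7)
The plan is to reduce this to classical Kru\v{z}kov theory. Since \cref{ass:input_datum_and_velocity} gives \(q_0\in L^{\infty}(\R)\) with \(\essinf_{x\in\R}q_0(x)\in\R_{>0}\), I would first truncate the velocity: choose \(\tilde V\in W^{1,\infty}(\R)\) that coincides with \(V\) on an interval containing \([\essinf_{\R}q_0,\|q_0\|_{L^{\infty}(\R)}]\) and is constant outside, so that \(\tilde f(u)\coloneqq u\tilde V(u)\) is globally Lipschitz. For the truncated flux, Kru\v{z}kov's theorem (see e.g.\ the standard references, which already are implicit in the singular-limit literature cited in the excerpt) yields existence of a unique entropy solution \(q\in L^{\infty}(\OT)\cap C([0,T];L^{1}_{\loc}(\R))\) satisfying \(\mathcal{EF}[\varphi,|\cdot-k|,q]\ge0\) for all \(k\in\R\) and all admissible \(\varphi\). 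The equivalence between the Kru\v{z}kov family \(\alpha(u)=|u-k|\) and the \(C^{2}\)-convex family used in \cref{def:entropysol_local} is standard: any \(C^{2}\) convex \(\alpha\) can be written as a nonnegative superposition of \(|\cdot-k|\)'s (Riesz representation of \(\alpha''\)), and conversely \(|\cdot-k|\) can be approximated in \(C^{0}\) by smooth strictly convex entropies. The shift in the time-testing window from \([0,T)\) to \((-42,T)\) is cosmetic because any \(\varphi\in C^{1}_{\mathrm{c}}((-42,T)\times\R)\) can be approximated with test functions that equal a smooth cutoff near \(t=0\).

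For the maximum principle, the constants \(m\coloneqq\essinf_{\R}q_0\) and \(M\coloneqq\|q_0\|_{L^{\infty}(\R)}\) are stationary entropy solutions. By the \(L^{1}\)-comparison principle from Kru\v{z}kov's doubling of variables (if \(q_{0,1}\le q_{0,2}\) a.e.\ then the entropy solutions satisfy \(q_{1}\le q_{2}\) a.e.\ for all \(t\)), one immediately obtains
\begin{equation*}
m\le q(t,x)\le M \qquad (t,x)\in\OT\text{ a.e.}
\end{equation*}
In particular the solution takes values only in the interval where \(\tilde V\equiv V\), so \(q\) is also an entropy solution of the original, untruncated problem in \cref{defi:local_conservation_law}. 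The regularity \(q\in C([0,T];L^{1}_{\loc}(\R))\) is a byproduct of the Kru\v{z}kov \(L^{1}\)-stability estimate combined with \(TV\)-bounds on the initial datum (which transfer to spatial translates and, via finite propagation speed times Lipschitz constant, to temporal translates).

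Finally, for the concluding claim about strictly concave flux and a single strictly convex entropy, I would invoke the well-known \emph{single-entropy principle} of Panov (and the related results of De~Lellis--Otto--Westdickenberg): when \(f\equiv(\cdot)V(\cdot)\) is strictly concave (hence genuinely nonlinear), any bounded weak solution satisfying \(\mathcal{EF}[\varphi,\alpha,q]\ge0\) for one strictly convex \(C^{2}\) entropy \(\alpha\) automatically satisfies all Kru\v{z}kov entropy inequalities, and therefore agrees with the unique entropy solution constructed above.

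The main obstacle, if any, is purely bookkeeping: verifying that the entropy formulation of \cref{def:entropysol_local} (with \(C^{2}\) convex entropies and the unusual time-support condition) genuinely coincides with the Kru\v{z}kov formulation to which the off-the-shelf existence/uniqueness theorems apply. All other steps are direct invocations of established results.
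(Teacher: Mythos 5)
Your proposal is correct and takes essentially the same route as the paper: the paper's own proof is a direct citation to the classical Kru\v{z}kov theory (via Bressan, Dafermos, Kru\v{z}kov) for existence, uniqueness and the maximum principle, and to Panov and De~Lellis--Otto--Westdickenberg for the single-entropy reduction under strictly concave flux. The additional details you supply (flux truncation, comparison principle for the bounds, equivalence of the Kru\v{z}kov and $C^2$-convex entropy families, handling of the \((-42,T)\) test-function window) are exactly the bookkeeping that the cited references encapsulate.
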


\begin{proof}
    Regarding the existence of a unique weak entropy solution for the local problem~\cref{eq:pnorm_local_problem}, we refer to the classical theory in \cite{bressan, Dafermos2010,kruzkov}. Moreover, the reduction to one strict entropy in the case of strictly concave (or convex) flux has been proven in \cite{Panov1994,DeLellisOttoWestdickenberg}.
\end{proof}
\subsection{The singular limit of the nonlocal conservation law for \texorpdfstring{\(\eta\rightarrow 0\)}{η to zero} in the case of exponential kernels}

In the following, we first show, without the further assumption on the flux, that the nonlocal solution as well as the nonlocal term converge to a weak solution of the local conservation law. As in our previous analysis in \cref{sec:singular_limit}, we restrict ourselves to the exponential kernel in the nonlocal operator with the remark that it might be possible to generalize to a broad class of kernels as in \cite{Marconi2023,keimerpflug23fixed} and as already discussed in \cref{rem:TV_bounds_generalization}.

\begin{theo}[Convergence to weak solutions of the local conservation law]\label{theo:convergence_nonlocal_local_weak}
Let the assumptions of \cref{theo:TV_bounds_Wp} hold and the nonlocal term be of exponential type, i.e., satisfying \cref{ass:exponential}. Then, the solution \(q_{\eta}\) and the nonlocal term \(W_{\eta}\coloneqq W_{\eta}[q,\exp(-\cdot)]\) of the nonlocal conservation law as in \cref{defi:model_class} converge modulo subsequences for any \(p\in\R_{>0}\) to a weak solution (in the sense of \cref{defi:weak_solution_local}) of the corresponding local conservation law as in \cref{defi:local_conservation_law}. In other words, there exists \(q_{*}\in C\big([0,T];L^{1}_{\text{loc}}(\R)\big)\cap L^{\infty}((0,T);L^{\infty}(\R))\) so that \(q_{*}\) satisfies \cref{eq:weak_solution_local} and \((\eta_{k})_{k\in\N}: \lim_{k\rightarrow \infty}\eta_{k}=0\)
\[
\lim_{k\rightarrow\infty} \|q_{*}-q_{\eta_{k}}\|_{C([0,T];L^{1}_{\text{loc}}(\R))}=0=\lim_{k\rightarrow\infty} \|q_{*}-W_{\eta_{k}}[q_{\eta_{k}},\exp(-\cdot)]\|_{C([0,T];L^{1}_{\text{loc}}(\R))}.
\]

\end{theo}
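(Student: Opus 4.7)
The plan is a direct passage-to-the-limit argument in the weak formulation of \cref{defi:weak_solution}, using the strong convergence already established in \cref{thm:strong_convergence} together with the uniform maximum principle from \cref{theo:existence_uniqueness_maximum_principle} and the local Lipschitz continuity of $V$.

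First, I would fix an arbitrary sequence $\eta_k \downarrow 0$ and invoke \cref{thm:strong_convergence} to extract a subsequence (not relabeled) and a limit $q_* \in C([0,T]; L^1_{\loc}(\R))$ with
\[
q_{\eta_k} \to q_*,\qquad W_{\eta_k}[q_{\eta_k},\exp(-\cdot)] \to q_* \qquad \text{in } C([0,T]; L^1_{\loc}(\R)).
\]
Because each $q_{\eta_k}$ and each $W_{\eta_k}$ lies in $[q_{\min}, \|q_0\|_{L^\infty(\R)}]$ by the maximum principle of \cref{theo:existence_uniqueness_maximum_principle}, passing to a further subsequence (diagonal in compact subsets of $\R$) yields pointwise a.e.\ convergence on any bounded $\Omega \subset \R$, and the limit satisfies $q_* \in L^\infty((0,T); L^\infty(\R))$ with the same bounds.

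Next, I would take an arbitrary test function $\phi \in C^1_{\mathrm c}((-42,T) \times \R)$ and write the weak formulation of \cref{defi:weak_solution} for $q_{\eta_k}$:
\[
\iint_{\OT}\!\!\Big(\phi_t(t,x) + V\big(W_{\eta_k}(t,x)\big)\phi_x(t,x)\Big)q_{\eta_k}(t,x)\dd x\dd t + \int_\R q_0(x)\phi(0,x)\dd x = 0.
\]
The initial-data term is independent of $k$ and requires no treatment. The term $\phi_t\, q_{\eta_k}$ converges to $\phi_t\, q_*$ in $L^1(\OT)$ since $\phi_t$ has compact support and $q_{\eta_k}\to q_*$ in $C([0,T];L^1_{\loc}(\R))$.

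The main (and only nontrivial) step is the nonlinear term $V(W_{\eta_k})\,\phi_x\, q_{\eta_k}$. Since $W_{\eta_k}\to q_*$ a.e.\ on the support of $\phi$ (after subsequence extraction) and since $V \in W^{2,\infty}_{\loc}$ restricted to the uniform range $[q_{\min}, \|q_0\|_{L^\infty(\R)}]$ is Lipschitz, I obtain
\[
V(W_{\eta_k}) \to V(q_*) \quad \text{pointwise a.e.\ on } \supp(\phi),\qquad \|V(W_{\eta_k})\|_{L^\infty} \leq \|V\|_{L^\infty([q_{\min},\|q_0\|_{L^\infty}])}.
\]
Combining this uniform bound with the $L^1_{\loc}$ convergence $q_{\eta_k}\to q_*$ and dominated convergence on $\supp(\phi)$ yields
\[
V(W_{\eta_k})\,\phi_x\, q_{\eta_k} \longrightarrow V(q_*)\,\phi_x\, q_* \quad \text{in } L^1(\OT).
\]
Passing to the limit in the weak equation gives exactly \cref{eq:weak_solution_local} for $q_*$, which is the desired weak-solution identity from \cref{defi:weak_solution_local}. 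I expect no genuine obstacle here: the uniform $L^\infty$ bound from the maximum principle is precisely what allows the strong $L^1_{\loc}$ convergence to upgrade to convergence of the nonlinear flux, and the compact support of $\phi$ eliminates any difficulty coming from the unbounded spatial domain. The only subtlety worth spelling out is that $q_*$ is a priori the same limit for both $q_{\eta_k}$ and $W_{\eta_k}$ by \cref{thm:strong_convergence}, which is exactly what collapses the nonlocal product $V(W_{\eta_k})q_{\eta_k}$ to the local one $V(q_*)q_*$.
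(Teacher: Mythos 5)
Your proof is correct and follows essentially the same route as the paper: invoke \cref{thm:strong_convergence} for the common limit $q_*$ of $q_{\eta_k}$ and $W_{\eta_k}$, then pass to the limit in the weak formulation of \cref{defi:weak_solution} using the uniform $L^\infty$ bounds from \cref{theo:existence_uniqueness_maximum_principle} and the local Lipschitz continuity of $V$. The only difference is that you spell out the dominated-convergence and a.e.-subsequence mechanics, whereas the paper compresses this into one sentence; one could also bypass the a.e. extraction entirely by noting that Lipschitz continuity of $V$ on $[q_{\min},\|q_0\|_{L^\infty}]$ directly upgrades $W_{\eta_k}\to q_*$ in $L^1_{\loc}$ to $V(W_{\eta_k})\to V(q_*)$ in $L^1_{\loc}$.
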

\begin{proof}
    The existence of such \(q_{*}\in C\big([0,T];L^{1}_{\text{loc}}(\R)\big)\cap L^{\infty}\big((0,T);L^{\infty}(\R)\big)\) as the limit point is ensured by \cref{thm:strong_convergence} as well as the convergence of \(q_{\eta},\ W_{\eta}\coloneqq W_{p,\eta}[q_{\eta},\exp(-\cdot)]\) in \(C([0,T];L^{1}(\Omega))\) for \(\Omega\subset\R\) open and bounded. As \(q_{\eta}\) is a weak solution (in the sense of \cref{defi:weak_solution}) of the nonlocal conservation law, it holds for \(\phi\in C^{1}_{\text{c}}((-42,T)\times\R)\) that
    \begin{align*}
              0&=\iint_{\OT}\Big(\phi_{t}(t,x)+V\big(W_{\eta}(t,x)\big)\phi_{x}(t,x)\Big)q_{\eta}(t,x)\dd x\dd t+\int_{\R} q_{0}(x)\phi(0,x)\dd x.
    \intertext{
    Thanks to \(V\) being locally Lipschitz and the strong convergence in \(C([0,T];L^{1}_{\text{loc}}(\R))\) of  \(q_{\eta}, W_{\eta}\) to the same limit point as well as their uniform boundedness, we have}
            0&=\lim_{\eta\rightarrow 0}\iint_{\OT}\Big(\phi_{t}(t,x)+V\big(W_{\eta}(t,x)\big)\phi_{x}(t,x)\Big)q_{\eta}(t,x)\dd x\dd t+\int_{\R} q_{0}(x)\phi(0,x)\dd x\\
              &=\iint_{\OT}\Big(\phi_{t}(t,x)+V\big(q_{*}(t,x)\big)\phi_{x}(t,x)\Big)q_{*}(t,x)\dd x\dd t+\int_{\R} q_{0}(x)\phi(0,x)\dd x.
    \end{align*}
    This indeed matches the definition of a weak solution as stated in \cref{defi:weak_solution_local}. Thanks to \cref{thm:strong_convergence}, \(W_{\eta_{k}}\) converges also to the limit \(q_{*}\) in \(C([0,T];L^{1}_{\text{loc}}(\R)),\) concluding the proof.
\end{proof}
As pointed out before, weak solutions of local conservation laws are not necessarily unique, so the previous result alone does not guarantee convergence to the ``correct'' entropy-admissible local solution. This convergence to the local entropy solution is established in the following under more restrictive assumptions on flux and velocity function. The key idea is due to \cite{bressan2021entropy} where such convergence for the exponential kernel is carried out for \(p=1\). However, in the given setup, the arbitariness of \(p\) makes the analysis more involved. Moreover, we will also give an alternative proof in \cref{thm:entropy_admissibility_nonlocal}, directly working on the nonlocal term and not the solution, similarly to \cite[Theorem 1.2]{Marconi2023} for the case \(p=1\).
\begin{theo}[Entropy admissibility of the nonlocal solution] \label{thm:entropy_admissibility}
    Let the assumptions of \cref{thm:strong_convergence} hold and assume in addition that \([q_{\min},\|q_{0}\|_{L^{\infty}(\R)}]\ni x\mapsto xV(x)\) is strictly concave. 
    Then, for any \(p\in\R_{>0}\), the nonlocal term \(W_{\eta}\) and the corresponding nonlocal solution \(q_\eta\in C\big([0,T];L^1_{\text{loc}}(\R)\big)\) of the nonlocal problem in \cref{eq:pnorm_problem} converge in \(C\big([0,T];L^1_{\text{loc}}(\R)\big)\) to the entropy solution of the corresponding local Cauchy problem in \cref{eq:pnorm_local_problem}.
\end{theo}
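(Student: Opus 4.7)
The plan is to upgrade the weak convergence from \cref{theo:convergence_nonlocal_local_weak} to entropy admissibility, following the strategy of \cite{bressan2021entropy} (developed there for $p=1$) and exploiting the uniqueness reduction in \cref{theo:local_existence_uniqueness_max_principle}: since the flux $f\equiv(\cdot)V(\cdot)$ is strictly concave on the invariant range $[q_{\min},\|q_{0}\|_{L^{\infty}(\R)}]$ by hypothesis, it suffices to verify the entropy inequality \eqref{eq:entropy_condition} for a \emph{single} strictly convex $\alpha\in C^{2}(\R)$ with paired flux $\beta$ satisfying $\beta'\equiv\alpha' f'$. Once this is done for every subsequential limit $q_{*}$ of $q_{\eta}$ (equivalently, of $W_{\eta}$), uniqueness of the local entropy solution promotes the subsequential convergence of \cref{thm:strong_convergence} to convergence of the full family.

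Fix such an $\alpha,\beta$ and work with the smooth approximation supplied by \cref{theo:existence_uniqueness_maximum_principle} and \cref{lem:approx}. Multiplying the nonlocal PDE by $\alpha'(q_{\eta})$ and using the chain rule together with $\beta'\equiv\alpha' f'$ gives the pointwise identity
\[
\partial_{t}\alpha(q_{\eta})+\partial_{x}\beta(q_{\eta})=\alpha'(q_{\eta})\bigl(V(q_{\eta})-V(W_{\eta})\bigr)\partial_{x}q_{\eta}-\alpha'(q_{\eta})\,q_{\eta}\,V'(W_{\eta})\,\partial_{x}W_{\eta}.
\]
Testing against $\phi\in C^{1}_{\text{c}}((-42,T)\times\R;\R_{\geq 0})$ and integrating by parts on the left-hand side reduces the task to showing that the right-hand side converges, as $\eta\to 0^{+}$, to a non-positive distribution. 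The first remainder is controlled by the pointwise estimate $|q_{\eta}-W_{\eta}|\leq C_{p}\,\eta\,|\partial_{x}W_{\eta}|$, obtained from the identity $q_{\eta}^{p}-W_{\eta}^{p}=-\eta W_{\eta}^{p-1}\partial_{x}W_{\eta}$ of \cref{lem:PDE_nonlocal} and the mean value theorem applied to $x\mapsto x^{1/p}$ on the compact interval $[q_{\min}^{p},\|q_{0}\|_{L^{\infty}(\R)}^{p}]$ (which is bounded away from zero by \cref{theo:existence_uniqueness_maximum_principle}). Combined with the uniform $L^{\infty}$-bound on $\eta\,\partial_{x}W_{\eta}$ from \cref{rem:TV_bounds_generalization} and the uniform $TV$-bound on $W_{\eta}^{p}$ from \cref{theo:TV_bounds_Wp}, this yields $\|q_{\eta}-W_{\eta}\|_{L^{1}_{\text{loc}}(\OT)}=O(\eta)$, which suffices to kill the first remainder in the limit via the local Lipschitz continuity of $V$ and the uniform $TV$-bound on $q_{\eta}$.

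The second remainder is where the real work lies. Using \cref{eq:nonlocal_identity} to rewrite $\eta\,\partial_{x}W_{\eta}=(W_{\eta}^{p}-q_{\eta}^{p})\,W_{\eta}^{1-p}$, then integrating by parts in $x$ to move the derivative onto $\alpha'(q_{\eta})\,q_{\eta}$ and $\phi$, and finally exploiting the sign $V'\leqq 0$ together with the strict convexity of $\alpha$, one reorganizes the expression so that, in the limit $\eta\to 0$ (via the strong convergence $q_{\eta},W_{\eta}\to q_{*}$ of \cref{thm:strong_convergence}), the residual contribution is a non-positive distribution, with the apparently wrong-signed terms cancelling through the pairing $\beta'\equiv\alpha' f'$. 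The entropy inequality $\mathcal{EF}[\phi,\alpha,q_{*}]\geq 0$ then holds, and \cref{theo:local_existence_uniqueness_max_principle} identifies $q_{*}$ uniquely as the Kruzhkov entropy solution of \eqref{eq:pnorm_local_problem}.

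The main obstacle is precisely this last reorganization. For $p=1$ the relation $q_{\eta}-W_{\eta}=-\eta\,\partial_{x}W_{\eta}$ is linear and cancels directly against the entropy flux as in \cite{bressan2021entropy}; for $p\neq 1$ the $p$-power identity is intrinsically nonlinear, so one must keep careful track of correction factors of the form $W_{\eta}^{p-1}\partial_{x}W_{\eta}$ and exploit the uniform lower bound $q_{\min}>0$ (from \cref{theo:existence_uniqueness_maximum_principle}) to regularize the negative powers that appear. This difficulty is precisely what motivates the alternative proof to be given in \cref{thm:entropy_admissibility_nonlocal}, which works directly with $W_{\eta}^{p}$ via the PDE of \cref{rem:PDE_nonlocal_p}, along the lines of \cite[Theorem 1.2]{Marconi2023}.
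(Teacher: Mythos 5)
Your framework is sound up to a point — you correctly identify that strict concavity of the flux lets you invoke the single-entropy uniqueness criterion from \cref{theo:local_existence_uniqueness_max_principle}, and that one should work with the smooth approximation, exploit the identity $q_\eta^{p} = W_\eta^{p} - \eta W_\eta^{p-1}\partial_x W_\eta$, and pass to the limit using the strong $C(L^1_{\mathrm{loc}})$ convergence of both $q_\eta$ and $W_\eta$ to the same limit. However, your proposal contains a genuine gap: at the step you call ``where the real work lies,'' you assert without argument that after integration by parts and using $V'\leqq 0$ the residual is a non-positive distribution in the limit, and you then candidly admit you cannot carry out the ``reorganization'' for $p\neq 1$. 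This is not a detail to be filled in — it is the crux of the theorem, and for a \emph{generic} strictly convex $\alpha$ the wrong-signed terms do not in fact cancel.

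The missing idea, which the paper supplies, is the \emph{specific} choice of entropy $\alpha(\cdot) = (\cdot)^{mp}/(mp)$ with $mp\in\N_{>1}$, tailored to the $p$-power structure. With that choice, $\alpha'(q_\eta) = q_\eta^{mp-1}$ factors cleanly against the identity $q_\eta^p = W_\eta^p - \eta W_\eta^{p-1}\partial_x W_\eta$, which one then substitutes \emph{iteratively} ($m$ times) to convert $q_\eta^{mp}$ into $W_\eta^{mp}$, at each stage discarding a term of the form $-\eta\,\phi\,(\ldots)V'(W_\eta)(\partial_x W_\eta)^2$ whose sign is favourable because $V'\leqq 0$ and $mp-1\geq 0$. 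What survives is then exactly $\tfrac{mp-1}{mp}\iint\phi\,\partial_x\bigl(F(W_\eta)-F(q_\eta)\bigr)$ with $F(\cdot)=\int_0^{\cdot}s^{mp}V'(s)\,\dd s$, which vanishes as $\eta\to0$ by strong convergence and Lipschitz continuity of $F$. The parameter $m$ is precisely what rescues the case $p\in(0,1)$, ensuring $mp>1$ so that the chosen $\alpha$ is strictly convex and the dropped terms keep their sign. None of this appears in your sketch. Separately, there is a computational slip: your pointwise identity omits the term $\alpha'(q_\eta)\,q_\eta\,V'(q_\eta)\,\partial_x q_\eta$ arising from $\beta' = \alpha' f'$ with $f(\cdot) = (\cdot)V(\cdot)$; expanding $-\alpha'(q_\eta)\partial_x\bigl(q_\eta(V(W_\eta)-V(q_\eta))\bigr)$ produces three terms, not two.
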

\begin{proof}
We consider the entropy condition in \cref{def:entropysol_local} with smooth initial datum and, as a result, smooth solution, which is due to  \cref{lem:approx} being possible and there being no restriction. Then, we will be dealing with classical solutions, and, suppressing the dependency on the smoothing parameter, we obtain for \(\phi\in C^{1}_{\text{c}}((-42,T)\times\R;\R_{\geq 0})\)
\begin{align*}
\mEF[\phi,\alpha,q_{\eta}]&=\iint_{\OT}\alpha\big(q_{\eta}(t,x)\big)\phi_{t}(t,x)+\beta\big(q_{\eta}(t,x)\big)\phi_{x}(t,x)\dd x\dd t+\int_{\R}\alpha\big(q_{0}(x)\big)\phi(0,x)\dd x.\notag
\intertext{An integration by parts, recalling that most boundary evaluations cancel due to the assumption on \(\phi\) being compactly supported in the spatial variable and zero at \(t=T\), yields}
&=-\iint_{\OT}\phi(t,x)\Big(\alpha'\big(q_{\eta}(t,x)\big)\partial_{t}q_{\eta}(t,x)+\beta'\big(q_{\eta}(t,x)\big)\partial_{x}q_{\eta}(t,x)\Big)\dd x\dd t.\notag\\
\intertext{Plugging in the strong form for \(\partial_{t}q_{\eta}\) in \cref{defi:model_class} and using the identity for \(\beta'\) in \cref{eq:entropy_flux_pair} results in}
&=\iint_{\OT}\phi(t,x)\alpha'\big(q_{\eta}(t,x)\big)\partial_{x}\Big(V\big(W_{\eta}(t,x)\big)q_{\eta}(t,x)-V\big(q_{\eta}(t,x)\big)q_{\eta}(t,x)\Big)\dd x\dd t\notag\\
\intertext{for \(W_{\eta}\coloneqq W_{p,\eta}[q_{\eta},\exp(-\cdot)].\) Choosing next, as entropy, \(\alpha\equiv\tfrac{(\cdot)^{mp}}{mp},\ x\in\R,\) for \(mp\in\N_{>1}\), yields}
&=\iint_{\OT}\phi(t,x)q_{\eta}(t,x)^{m p-1}\partial_{x}\Big(q_{\eta}(t,x)\big(V(W_{\eta}(t,x))-V(q_{\eta}(t,x))\big)\Big)\dd x\dd t.\notag
\intertext{Carrying out the derivative in front of the bracket}
&=\iint_{\OT}\phi(t,x)q_{\eta}^{m p-1}(t,x)\partial_{x}q_{\eta}(t,x)\Big(V\big(W_{\eta}(t,x)\big)-V\big(q_{\eta}(t,x)\big)\Big)\dd x\dd t\notag\\
&\quad+ \iint_{\OT}\phi(t,x) q_{\eta}(t,x)^{m p}\partial_{x}\Big(V\big(W_{\eta}(t,x)\big)-V\big(q_{\eta}(t,x)\big)\Big)\dd x\dd t\notag\\
\intertext{and integrating by parts in the first term gives}
&=-\tfrac{1}{mp}\iint_{\OT}\partial_{x}\phi(t,x) q_{\eta}(t,x)^{mp}\Big(V\big(W_{\eta}(t,x)\big)-V\big(q_{\eta}(t,x)\big)\Big)\dd x\dd t\\
&\quad+\tfrac{mp-1}{mp} \iint_{\OT}\phi(t,x)q_{\eta}(t,x)^{mp}\partial_{x}\Big(V\big(W_{\eta}(t,x)\big)-V\big(q_{\eta}(t,x)\big)\Big)\dd x\dd t.
\intertext{Carrying out the derivative in the second term and splitting up yields}
&=-\tfrac{1}{mp}\iint_{\OT}\partial_{x}\phi(t,x)q_{\eta}(t,x)^{mp}\Big(V\big(W_{\eta}(t,x)\big)-V\big(q_{\eta}(t,x)\big)\Big)\dd x\dd t\\
&\quad+\tfrac{mp-1}{mp} \iint_{\OT}\phi(t,x)q_{\eta}(t,x)^{mp} V'\big(W_{\eta}(t,x)\big)\partial_x W_{\eta}(t,x) \dd x\dd t\\
&\quad-\tfrac{mp-1}{mp}\iint_{\OT} \phi(t,x)q_{\eta}(t,x)^{mp} V'\big(q_{\eta}(t,x)\big)\partial_{x}q_{\eta}(t,x)\dd x\dd t
\intertext{and, applying the well-used identity \( q^p_{\eta}\equiv W_{\eta}^p-\eta W_{\eta}^{p-1} \partial_2 W_{\eta}\) (see \cref{eq:nonlocal_identity}) in the second term, we have}
&=-\tfrac{1}{mp}\iint_{\OT}\partial_{x}\phi(t,x)q_{\eta}(t,x)^{mp}\Big(V\big(W_{\eta}(t,x)\big)-V\big(q_{\eta}(t,x)\big)\Big)\dd x\dd t\\
&\quad+\tfrac{mp-1}{mp} \iint_{\OT}\phi(t,x)q_{\eta}(t,x)^{mp-p} W_{\eta}(t,x)^p V'\big(W_{\eta}(t,x)\big)\partial_x W_{\eta}(t,x) \dd x\dd t\\
&\quad-\tfrac{\eta (mp-1)}{m p} \!\!\!\iint_{\OT}\!\!\!\!\!\phi(t,x)q_{\eta}(t,x)^{mp-p} \Big(W_{\eta}(t,x)^{p-1} \partial_x W_{\eta}(t,x)\Big) V'(W_{\eta}(t,x))\partial_x W_{\eta}(t,x) \dd x\dd t\\
&\quad-\tfrac{mp-1}{mp}\iint_{\OT} \phi(t,x)q_{\eta}(t,x)^{mp} V'(q_{\eta}(t,x))\partial_{x}q_{\eta}(t,x)\dd x\dd t
\intertext{leaving off the third term as it is positive if \(mp-1\geq 0\) recalling that \(V'\leqq 0\)}
&\geq -\tfrac{1}{mp}\iint_{\OT}\partial_{x}\phi(t,x)q_{\eta}(t,x)^{mp}\Big(V\big(W_{\eta}(t,x)\big)-V\big(q_{\eta}(t,x)\big)\Big)\dd x\dd t\\
&\quad+\tfrac{mp-1}{mp} \iint_{\OT}\phi(t,x)q_{\eta}(t,x)^{mp-p} W_{\eta}(t,x)^p V'\big(W_{\eta}(t,x)\big)\partial_x W_{\eta}(t,x) \dd x\dd t\\
&\quad-\tfrac{mp-1}{mp}\iint_{\OT} \phi(t,x)q_{\eta}(t,x)^{mp} V'\big(q_{\eta}(t,x)\big)\partial_{x}q_{\eta}(t,x)\dd x\dd t
\intertext{and repeating these steps by plugging in the aforementioned and used identity \( q^p_{\eta}\equiv W_{\eta}^p-\eta W_{\eta}^{p-1} \partial_2 W_{\eta}\) and leaving out in each step the positive term}
&\geq -\tfrac{1}{mp}\iint_{\OT}\partial_{x}\phi(t,x)q_{\eta}(t,x)^{mp}\Big(V\big(W_{\eta}(t,x)\big)-V\big(q_{\eta}(t,x)\big)\Big)\dd x\dd t\\
&\quad+\tfrac{mp-1}{mp} \iint_{\OT}\phi(t,x) W_{\eta}(t,x)^{mp}  V'\big(W_{\eta}(t,x)\big)\partial_x W_{\eta}(t,x) \dd x\dd t\\
&\quad-\tfrac{\eta(mp-1)}{mp} \iint_{\OT}\phi(t,x) W_{\eta}(t,x)^{mp-1} \big(\partial_x W_{\eta}(t,x)\big)^{2} V'\big(W_{\eta}(t,x)\big) \dd x\dd t\\
&\quad-\tfrac{mp-1}{mp}\iint_{\OT} \phi(t,x)q_{\eta}(t,x)^{mp} V'\big(q_{\eta}(t,x)\big)\partial_{x}q_{\eta}(t,x)\dd x\dd t.\\
\intertext{Finally, leaving out again the remaining last positive term and noticing that the second term is entirely in \(W_{\eta},\partial_{2}W_{\eta}\) and the third one entirely in \(q_{\eta},\ \partial_{2}q_{\eta}\) we have with \(F(\cdot)\coloneqq \int_{0}^{\cdot}s^{mp}V'(s)\dd s\)}
&\geq -\tfrac{1}{mp}\iint_{\OT}\partial_{x}\phi(t,x)q_{\eta}(t,x)^{mp}\Big(V\big(W_{\eta}(t,x)\big)-V\big(q_{\eta}(t,x)\big)\Big)\dd x\dd t\\
&\quad+\tfrac{mp-1}{mp} \iint_{\OT} \phi(t,x) \partial_{x}F(W_{\eta}(t,x)) \dd x\dd t-\tfrac{mp-1}{mp}\iint_{\OT} \phi(t,x) \partial_{x}F\big(q_{\eta}(t,x)\big) \dd x\dd t.
\intertext{Integrating by parts yields }
&=-\tfrac{1}{mp}\iint_{\OT}\partial_{x}\phi(t,x)q_{\eta}^{mp}(t,x)\Big(V\big(W_{\eta}(t,x)\big)-V\big(q_{\eta}(t,x)\big)\Big)\dd x\dd t\\
&\quad-\tfrac{mp-1}{mp} \iint_{\OT} \phi_{x}(t,x)\Big(F\big(W_{\eta}(t,x)\big)-F\big(q_{\eta}(t,x)\big)\Big) \dd x\dd t\\
&\overset{\eta\rightarrow 0}{=}0
\end{align*}
where the last implication is due to \(\phi\) having compact support, the strong convergence  of \(W_{\eta},\ q_{\eta}\) in \(C\big([0,T];L^{1}_{\text{loc}}(\R)\big)\) to the same limit point according to \cref{thm:strong_convergence} and the Lipschitz continuity of \(V,F\). Altogether, this gives the entropy admissibility of the nonlocal solution in the limit \(\eta\rightarrow 0\) for the chosen entropy \(\alpha\equiv (\cdot)^{mp}\) which is strictly convex on \([q_{\min},\|q_{0}\|_{L^{\infty}(\R)}]\) for \(mp>1\) as assumed, so that we can apply \cref{theo:local_existence_uniqueness_max_principle} and have \(q^{*}\in C([0,T];L^{1}_{\text{loc}}(\R)),\) a limit point of a subsequence of \(q_{\eta},\ W_{\eta}\) respectively, be a weak entropy solution in the sense of \cref{def:entropysol_local}. As weak entropy solutions are unique, we can afterwards argue that the convergence to the local entropy solution holds for all subsequence.
\end{proof}
\begin{rem}[Comments on the entropy admissibility]
The entropy admissibility for \(p\in(0,1)\) in \cref{thm:entropy_admissibility} is due to the introduction of \(m\in\N_{\geq 0}\) in the choice of strict convex entropy and the choice of \(mp\geq 1\). When only looking for \(p\geq 1\), we could have worked with the entropy \(\alpha(\cdot)\equiv (\cdot)^{p}\).
\end{rem}

In the following theorem, we give an alternative approach to showing entropy admissibility by focusing not on the solution \(q_{\eta}\) but rather on the nonlocal operator \(W_{\eta}[q_{\eta},\exp(-\cdot)]\) directly, however, only for \(p\in(1,\infty)\).
\begin{theo}[Entropy admissibility of the nonlocal operator] \label{thm:entropy_admissibility_nonlocal}
    Let the assumptions of \cref{thm:strong_convergence} hold and assume in addition that \([q_{min},\|q_{0}\|_{L^{\infty}(\R)}]\ni x\mapsto xV(x)\) is strictly concave and \(p\in(1,\infty)\). Then, the nonlocal term \(W_{\eta}\) and the corresponding nonlocal solution \(q_\eta\in C\big([0,T];L^1_{\text{loc}}(\R;\R)\big)\) of the nonlocal problem \cref{eq:pnorm_problem} are entropy admissible for the entropy \(\alpha\equiv (\cdot)^{p}\) in the limit \(\eta\rightarrow0\), i.e.,
    \[\lim_{\eta\rightarrow 0}\mathcal{EF}[\varphi, (\cdot)^{p}, W_{\eta}]\geq 0 \quad \forall \varphi \in C^1_{\text{c}}\left((-42,T)\times \R; \R_{\geq 0}\right),\  \text{with \(\mathcal{EF}\) as in \cref{eq:entropy_flux_pair}}.
    \]
\end{theo}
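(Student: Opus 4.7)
The plan is to exploit the PDE \cref{eq:nonlocal_W_p} for \(W_\eta^p\) directly---it is tailor-made for the entropy \(\alpha\equiv(\cdot)^p\)---instead of going through \(q_\eta\) as in \cref{thm:entropy_admissibility}. After mollifying the initial datum via \cref{lem:approx} to obtain classical regularity of \(W_\eta\) (justified by the stability statement in \cref{theo:existence_uniqueness_maximum_principle}; the smoothing is removed at the end), an integration by parts in \(t\) and \(x\) yields
\[
\mEF[\varphi,(\cdot)^p,W_\eta]=-\iint_{\OT}\varphi\,\big(\partial_t W_\eta^p+\partial_x\beta(W_\eta)\big)\dd x\dd t,
\]
so that the task reduces to rewriting the integrand as the sum of an \(O(\eta)\) piece and a manifestly nonnegative ``dissipative'' piece.

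Introduce the primitive \(F(s)\coloneqq\int_0^s r^pV'(r)\dd r\), so that \(\partial_x F(W_\eta)=W_\eta^pV'(W_\eta)\partial_x W_\eta\), and compute from \(\beta'=\alpha'f'\) that \(\beta(s)=s^pV(s)+(p-1)F(s)\); the ``extra'' contribution in \(\partial_x\beta(W_\eta)\) beyond \(V(W_\eta)\partial_x W_\eta^p\) is exactly \(p\,\partial_x F(W_\eta)\). The first nonlocal integral in \cref{eq:nonlocal_W_p} is then \(-\tfrac{p^2}{\eta}\int_x^\infty e^{p(x-y)/\eta}\partial_y F(W_\eta)\dd y\), and an integration by parts in \(y\)---introducing the exponential average
\[
\widehat F_\eta(t,x)\coloneqq\tfrac{p}{\eta}\int_x^\infty e^{p(x-y)/\eta}F\big(W_\eta(t,y)\big)\dd y
\]
---converts it into \(\tfrac{p^2}{\eta}\big(F(W_\eta)-\widehat F_\eta\big)\). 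The crucial algebraic observation is that \(\widehat F_\eta\) satisfies exactly the same exponential-kernel identity as \cref{eq:nonlocal_identity}, namely \(F(W_\eta)=\widehat F_\eta-\tfrac{\eta}{p}\partial_x\widehat F_\eta\). Collecting every term then collapses the integrand to
\[
\partial_t W_\eta^p+\partial_x\beta(W_\eta)=-\eta\,\partial_x^2\widehat F_\eta+p(p-1)\int_x^\infty e^{p(x-y)/\eta}V'(W_\eta)\,W_\eta^{p-1}\,(\partial_y W_\eta)^2\dd y,
\]
so that, testing against \(\varphi\), one obtains
\[
\mEF[\varphi,(\cdot)^p,W_\eta]=-\eta\iint_{\OT}\varphi_x\,\partial_x\widehat F_\eta\dd x\dd t-p(p-1)\iint_{\OT}\varphi\int_x^\infty e^{p(x-y)/\eta}V'(W_\eta)\,W_\eta^{p-1}\,(\partial_y W_\eta)^2\dd y\dd x\dd t.
\]

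The second term is nonnegative for \(p>1\) since \(\varphi\geq0\), \(V'\leq0\), \(W_\eta^{p-1}\geq0\), \((\partial_y W_\eta)^2\geq0\) and the prefactor \(-p(p-1)\leq0\). For the first term, since \(\widehat F_\eta\) is the convolution of \(F(W_\eta)\) with a probability kernel, \(|\widehat F_\eta(t,\cdot)|_{TV(\R)}\leq|F(W_\eta(t,\cdot))|_{TV(\R)}\), which is itself bounded by the Lipschitz constant of \(F\) on \([q_{\min},\|q_0\|_{L^\infty(\R)}]\) times \(|W_\eta(t,\cdot)|_{TV(\R)}\); the latter is uniformly bounded in \(\eta\) by \cref{theo:TV_bounds_Wp} together with \cref{rem:total_variation_bound_W} (where the lower bound \(q_{\min}>0\) is essential). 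Hence the first term is \(O(\eta)\) and vanishes as \(\eta\to0\), giving \(\liminf_{\eta\to0}\mEF[\varphi,(\cdot)^p,W_\eta]\geq0\); the strong convergence \(W_\eta\to q_*\) in \(C([0,T];L^1_{\loc}(\R))\) from \cref{thm:strong_convergence} upgrades this \(\liminf\) to a \(\lim\). The main obstacle is uncovering the right algebraic reorganization---recognizing the primitive \(F\) and its nonlocal-average companion \(\widehat F_\eta\), and then invoking the elliptic-type identity that absorbs the \(1/\eta\) factor into a harmless \(O(\eta)\) remainder---after which the sign restriction \(p\in(1,\infty)\) is exactly what guarantees the residual term has the correct sign.
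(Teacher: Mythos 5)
Your proof is correct, and the ingredients are the same as the paper's: the PDE \cref{eq:nonlocal_W_p} for \(W_\eta^p\), the primitive \(F\) of \(s^pV'(s)\), integration by parts, and the uniform \(TV\) bound on \(W_\eta\) from \cref{theo:TV_bounds_Wp} via \cref{rem:total_variation_bound_W} to kill the \(O(\eta)\) remainder. However, you organize the algebra more transparently than the paper does. Where the paper performs a sequence of integrations by parts and order-of-integration swaps to arrive at the two-term representation \crefrange{eq:entropy_admissibility_W_1}{eq:entropy_admissibility_W_2}, you introduce the exponential average \(\widehat F_\eta\), observe that it satisfies its own elliptic identity \(F(W_\eta)=\widehat F_\eta-\tfrac{\eta}{p}\partial_x\widehat F_\eta\) (the exact analogue of \cref{eq:nonlocal_identity}), and use this to collapse everything into the single pointwise identity
\[
\partial_t W_\eta^p+\partial_x\beta(W_\eta)=-\eta\,\partial_x^2\widehat F_\eta+p(p-1)\int_x^\infty e^{p(x-y)/\eta}V'(W_\eta)\,W_\eta^{p-1}\,(\partial_y W_\eta)^2\,\dd y.
\]
This cleanly separates an \(O(\eta)\) error term (divergence of a \(TV\)-bounded object) from a manifestly signed dissipation term, whereas the paper's final expression presents the error as a double integral against \(\partial_x\varphi\) whose \(O(\eta)\) size is less immediate. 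Both bounds reduce to \(\|F'\|_{L^\infty}|W_\eta|_{TV}\|\partial_2\varphi\|_{L^\infty}\cdot\tfrac{\eta}{p}\), so the two routes give the same estimate; your packaging simply isolates the algebraic mechanism---that the same elliptic structure governs both \(W_\eta\) and \(\widehat F_\eta\)---more explicitly, and may be reusable for other kernels admitting an analogous resolvent identity. One small plus: you correctly observe that the strong convergence \(W_\eta\to q_*\) in \(C([0,T];L^1_{\loc})\) upgrades the \(\liminf\geq 0\) to an actual limit, which the paper asserts without comment.
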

\begin{proof}
Similar to the proof of \cref{thm:entropy_admissibility_nonlocal}, we show the entropy admissibility of the nonlocal operator, i.e., the following entropy admissibility (see \cref{eq:entropy_condition})
\begin{align*}
        \mathcal{EF}[\varphi, \alpha, W_{\eta}]\coloneqq\iint_{\Omega_T} \alpha\big(W_{\eta}(t,x)\big) \varphi_t(t,x)+ \beta\big(W_{\eta}(t,x)\big) \varphi_x(t,x) \dd x \dd t + \int_\R \alpha\big(W_{\eta}(0,x)\big) \varphi(0,x) \dd x\geq 0.
\end{align*}
Integrating by parts yields, for the entropy \(\alpha'(\cdot)\coloneqq (\cdot)^{p-1}\) after a multiplication with \(p\), 
\begin{align}
    p\mathcal{EF}[\varphi,\alpha,W_{\eta}]&=-\iint_{\OT} \Big(\partial_{1}W_{\eta}^{p}+ pW_{\eta}^{p-1}\big(V(W_{\eta})+V'(W_{\eta})W_{\eta}\big) \partial_{2}W_{\eta}\Big)\phi\dd x\dd t\notag
    \intertext{and plugging in the expression for \(\partial_{1}(W^{p})\) in \cref{rem:PDE_nonlocal_p}}
    &=\iint_{\OT}\!\!\! \bigg(V(W_{\eta})\partial_{2}W^{p}_{\eta}+\tfrac{p^{2}}{\eta}\int_{x}^{\infty}\!\!\!\!\e^{\frac{px-py}{\eta}}V'(W_{\eta}(t,y))\partial_{y}W_{\eta}(t,y) W_{\eta}(t,y)^{p}\dd y-V(W_{\eta})\partial_{2}W_{\eta}^{p}\notag\\
    &\quad  -p(p-1)\!\int_{x}^{\infty}\!\!\!\!\e^{\frac{px-py}{\eta}}V'(W_{\eta}(t,y))W_{\eta}(t,y)^{p-1}\big(\partial_{y}W_{\eta}(t,y)\big)^{2}\dd y -pW_{\eta}^{p}V'(W_{\eta})\partial_{2}W_{\eta} \bigg)\phi\dd x\dd t\notag
    \intertext{which is equivalent to}
    &=\iint_{\OT} \bigg(\tfrac{p^{2}}{\eta}\int_{x}^{\infty}\exp(\tfrac{px-py}{\eta})V'(W_{\eta}(t,y))\partial_{y}W_{\eta}(t,y) W_{\eta}(t,y)^{p}\dd y\notag\\
    &\quad - p(p-1)\!\!\int_{x}^{\infty}\!\!\!\e^{\frac{px-py}{\eta}}V'(W_{\eta}(t,y))W_{\eta}(t,y)^{p-1}\big(\partial_{y}W_{\eta}(t,y)\big)^{2}\!\dd y -pW_{\eta}^{p}V'(W_{\eta})\partial_{2}W_{\eta} \bigg)\phi\dd x\dd t\notag
    \intertext{and, with the definition \(F\equiv p\int_{0}^{\cdot} s^{p}V'(s)\dd s\), we can rewrite the expression to}
    &= \iint_{\OT} \bigg(- \tfrac{\dd}{\dd x}F(W_{\eta}(t,x)) +\tfrac{p}{\eta}\int_{x}^{\infty}\exp(\tfrac{px-py}{\eta})\tfrac{\dd}{\dd y}F(W_{\eta}(t,y))\dd y\notag\\
    &\qquad -p(p-1)\int_{x}^{\infty}\!\!\!\!\!\!\exp(\tfrac{px-py}{\eta})V'(W_{\eta}(t,y))W_{\eta}(t,y)^{p-1}\big(\partial_{y}W_{\eta}(t,y)\big)^{2}\dd y  \bigg)\phi\dd x\dd t.\notag
    \intertext{Integration by parts in the first term and changing the order of integration in the second term gives}
    &= \iint_{\OT}\partial_{2}\phi F(W_{\eta})\dd x\dd t +\tfrac{p}{\eta}\iint_{\OT}\tfrac{\dd}{\dd y}F(W_{\eta}(t,y)) \int_{-\infty}^{y}\exp(\tfrac{px-py}{\eta})\phi\dd x\dd y\dd t\notag\\
    &\qquad  -p(p-1)\iint_{\OT}\int_{x}^{\infty}\!\!\!\!\!\!\exp(\tfrac{px-py}{\eta})V'(W_{\eta}(t,y))W_{\eta}(t,y)^{p-1}\big(\partial_{y}W_{\eta}(t,y)\big)^{2}\dd y \phi\dd x\dd t\notag
    \intertext{and, integrating by parts in the inner integration of the second term,}
    &= \iint_{\OT}\partial_{2}\phi F(W_{\eta})\dd x\dd t +\iint_{\OT}\tfrac{\dd}{\dd y}F(W_{\eta}) \phi - \tfrac{\dd}{\dd y}F(W_{\eta}) \int_{-\infty}^{y}\exp(\tfrac{px-py}{\eta})\partial_{x}\phi\dd x\dd y\dd t\notag\\
    &\qquad - p(p-1)\iint_{\OT}\int_{x}^{\infty}\!\!\exp(\tfrac{px-py}{\eta})V'(W_{\eta})W_{\eta}^{p-1}\big(\partial_{y}W_{\eta}(t,y)\big)^{2}\dd y \phi\dd x\dd t\notag
    \intertext{which is, after another integration by parts in the second term, equivalent to}
    &= -\iint_{\OT} \tfrac{\dd}{\dd y}F(W_{\eta}) \int_{-\infty}^{x}\exp(\tfrac{px-py}{\eta})\partial_{x}\phi\dd x\dd y\dd t\label{eq:entropy_admissibility_W_1}\\
    &\qquad - p(p-1)\iint_{\OT}\int_{x}^{\infty}\!\!\exp(\tfrac{px-py}{\eta})V'(W_{\eta})W_{\eta}^{p-1}\big(\partial_{y}W_{\eta}(t,y)\big)^{2}\dd y \phi\dd x\dd t.\label{eq:entropy_admissibility_W_2}
\end{align}
However, as \(F\in W^{1,\infty}_{\text{loc}}(\R),\) \(0\leqq W_{\eta}\leqq \|q_{0}\|_{L^{\infty}(\R)},\) we can estimate the first term as follows
\[
\bigg|\iint_{\OT} \tfrac{\dd}{\dd y}F(W) \int_{-\infty}^{x}\exp(\tfrac{px-py}{\eta})\partial_{x}\phi \dd x\dd y\dd t\bigg|\leq \big\|F'\big\|_{L^{\infty}((0,q_{\max}))}|W_{\eta}|_{L^{\infty}((0,T);TV(\R))}\|\partial_2\phi\|_{L^{\infty}(\OT)}\tfrac{\eta}{p}.
\]
Thanks to the uniform \(TV\) bounds on \(W_{\eta}\) as explicated in \cref{rem:total_variation_bound_W}, the first term in\cref{eq:entropy_admissibility_W_1} converges to zero for \(\eta\rightarrow 0\), while the second term in \cref{eq:entropy_admissibility_W_2} is, by construction, non-negative as long as \(p\geq 1\). Thus, we have indeed obtained that \(\lim_{\eta\rightarrow 0}\mathcal{EF}[\varphi, (\cdot)^{p}, W_{\eta}]\geq 0\) as claimed.
\end{proof}
\begin{rem}[The case \(p=1\)]
Note that we have excluded \(p=1\) in \cref{thm:entropy_admissibility_nonlocal} although the calculations go through there as well. However, \(x\mapsto x\) is not a \textbf{strictly} convex entropy, preventing us from applying \cref{theo:local_existence_uniqueness_max_principle}. 

Interestingly, the entropy admissibility for the case \(p>1\) of the nonlocal operator is quite straightforward and significantly easier than for \(p=1\) as was carried out in \cite{Marconi2023,Colombo2024}.
\end{rem}

We finalize this section by uniting the previous statements into one theorem:
\begin{theo}[Convergence of nonlocal solutions to local entropy solutions]
Let \cref{ass:input_datum_and_velocity} hold and either
\begin{itemize}
    \item \(p\in\R_{\geq 1}\)
    \end{itemize}
    or
    \begin{itemize}
    \item \(p<1\) and let the initial datum satisfy \(\tfrac{\essinf_{x\in\R}q_{0}(x)}{\|q_{0}\|_{L^{\infty}(\R)}}\geq \sqrt[p]{1-p}\).
\end{itemize}
In addition, assume that
\begin{itemize}
\item the nonlocal operator is of exponential type, i.e., satisfies \cref{ass:exponential}
\item \([q_{\min},\|q_{0}\|_{L^{\infty}(\R)}]\ni x\mapsto xV(x)\) is strictly concave.
\end{itemize}
Then, the nonlocal solution \(q_{\eta}\in C([0,T];L^{1}_{\text{loc}}(\R))\) as well as the nonlocal term \(W_{p,\eta}[q_{\eta},\exp(-\cdot)]\) as in \cref{defi:model_class}, converge in \(C([0,T];L^{1}_{\text{loc}}(\R))\) to the local entropy solution \(q_{*}\in C([0,T];L^{1}_{\text{loc}}(\R))\) of \cref{defi:local_conservation_law}, i.e., for every \(\Omega\subset\R\) open bounded it holds that
\[
\lim_{\eta\rightarrow 0}\|q_{\eta}-q_{*}\|_{C([0,T];L^{1}(\Omega))}=0=\lim_{\eta\rightarrow 0}\|W_{p,\eta}[q_{\eta},\exp(-\cdot)]-q_{*}\|_{C([0,T];L^{1}(\Omega))}.
\]
\end{theo}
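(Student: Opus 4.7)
The plan is to assemble the pieces already established in \cref{sec:singular_limit} rather than redo any hard analysis. First I would apply \cref{thm:strong_convergence}: the hypotheses on \(p\) and the initial datum are exactly those of that theorem, so along every sequence \(\eta_{k}\downarrow 0\) there is a subsequence (not relabelled) along which both \(q_{\eta_{k}}\) and the nonlocal term \(W_{p,\eta_{k}}[q_{\eta_{k}},\exp(-\cdot)]\) converge in \(C([0,T];L^{1}_{\text{loc}}(\R))\) to a common limit \(q_{*}\in C([0,T];L^{1}_{\text{loc}}(\R))\cap L^{\infty}((0,T);L^{\infty}(\R))\). By \cref{theo:convergence_nonlocal_local_weak}, this limit \(q_{*}\) is automatically a weak solution, in the sense of \cref{defi:weak_solution_local}, of the local Cauchy problem of \cref{defi:local_conservation_law}; this step holds for every \(p\in\R_{>0}\) and needs no refinement.

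The second step is entropy admissibility. Invoking \cref{thm:entropy_admissibility} (or, alternatively for \(p\in(1,\infty)\), the nonlocal-operator-based \cref{thm:entropy_admissibility_nonlocal}), the added strict concavity of \([q_{\min},\|q_{0}\|_{L^{\infty}(\R)}]\ni x\mapsto xV(x)\) allows the entropy inequality for the strictly convex entropy \(\alpha(\cdot)=(\cdot)^{mp}/(mp)\) with \(mp>1\) to pass to the limit. By the final assertion of \cref{theo:local_existence_uniqueness_max_principle}, a single strictly convex entropy inequality is already enough to characterize the unique weak entropy solution when the flux is strictly concave. Hence \(q_{*}\) is the unique entropy solution of \cref{eq:pnorm_local_problem}, and in particular it is independent of the chosen subsequence.

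The final step is to promote subsequential convergence to convergence of the full family \(\{q_{\eta}\}_{\eta>0}\). Because every sequence \(\eta_{k}\downarrow 0\) now has a sub-subsequence converging in \(C([0,T];L^{1}(\Omega))\) to the \emph{same} limit \(q_{*}\) (determined only by \(q_{0}\) and \(V\)), the standard subsequence-of-subsequences argument yields \(\lim_{\eta\to 0}\|q_{\eta}-q_{*}\|_{C([0,T];L^{1}(\Omega))}=0\) for every open bounded \(\Omega\subset\R\); the analogous statement for \(W_{p,\eta}[q_{\eta},\exp(-\cdot)]\) follows either directly from \cref{thm:strong_convergence} or from the identity in \cref{eq:nonlocal_identity} combined with the uniform \(TV\) bound on \(W_{\eta}^{p}\). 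I do not expect any serious obstacle at this final assembly: all the genuine analytical work, namely the uniform \(TV\) bound on \(W_{\eta}^{p}\), the time equicontinuity, and the passage of the entropy inequality to the limit, has already been carried out in the preceding theorems, so the remaining task is essentially a packaging of those results together with the uniqueness of the local entropy solution guaranteed by \cref{theo:local_existence_uniqueness_max_principle}.
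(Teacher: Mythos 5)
Your proposal is correct and follows essentially the same route as the paper: the paper's proof is a one-line citation of \cref{thm:entropy_admissibility} and \cref{theo:convergence_nonlocal_local_weak} (resting on \cref{cor:compactnessWp}), and your write-up simply unpacks that chain, including the subsequence-of-subsequences step that the paper folds into the final sentence of the proof of \cref{thm:entropy_admissibility}.
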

\begin{proof}
    This is a consequence of \cref{thm:entropy_admissibility} and \cref{theo:convergence_nonlocal_local_weak}, based on \cref{cor:compactnessWp}.
\end{proof}
\begin{rem}[Strict concavity/convexity of the flux \(x\mapsto xV(x)\)]
    Note that the assumed strict concavity of \(x\mapsto xV(x)\) on \(x\in[q_{\min},\|q_{0}\|_{L^{\infty}(\R)}]\) can be replaced by the strict convexity, and the results in \cite{Panov1994} are still applicable. However, as we assume in this contribution that \(V'\leqq 0\) having traffic flow modelling in mind, this condition is unclassical in traffic, and indeed, when looking at the convexity assumption we would require that
    \[
    x\mapsto 2V'(x)+xV''(x)>0,\ x\in[q_{\min},\|q_{0}\|_{L^{\infty}(\R)}]
    \]
    which is generally more restrictive particularly when \(q_{\min}\) is small as the only term compensating for the negative sign of \(V'\) is \(V''\). This requires also at least that \(V''>0\).
\end{rem}
\subsection{\texorpdfstring{Ole\u inik}{Oleinik} estimates and further singular limit convergence}\label{subsec:Oleinik}
In this section, we shortly look into Ole\u inik estimates for the nonlocal operator. A similar approach for \(p=1\) had been taken in \cite{coclite2023oleinik} and in the present case, we obtain similar estimates for arbitary \(p\in(1,\infty)\) which become more restrictive the larger \(p\) becomes. We also obtain estimates for \(p<1\):
\begin{lem}[Ole\u inik-type inequality for \(W^p_{\eta}\)] \label{thm:oleinik}
Let \ref{ass:input_datum_and_velocity} and let the exponential kernel as in \cref{ass:exponential} be given. 
\begin{itemize}
\item If \(p>1,\) assume furthermore that it holds
\begin{enumerate}
        \item \(V''(\cdot)(\cdot)-(p-2)V'(\cdot)\geq 0\) on \([q_{\min},\|q_{0}\|_{L^{\infty}(\R)}]\)\label{item:1}
        \item \(\essinf_{x\in(q_{\min},\|q_{0}\|_{L^{\infty}(\R)})}|V'(x)x^{1-p}|-\tfrac{1}{p}\Big\|\tfrac{V''(\cdot)(\cdot)-(p-2)V'(\cdot)}{(\cdot)^{p-1}}\Big\|_{L^{\infty}((q_{\min},\|q_{0}\|_{L^{\infty}(\R)}))}\eqqcolon\kappa>0\).\label{item:2}
\end{enumerate}
\item If \(0<p<1,\) assume furthermore that the following holds
\begin{enumerate}
        \item \(V''(\cdot)(\cdot)+V'(\cdot)\geq 0\) on \([q_{\min},\|q_{0}\|_{L^{\infty}(\R)}]\)\label{item:2_1}
        \item \(\essinf_{x\in(q_{\min},\|q_{0}\|_{L^{\infty}(\R)})}
 \left|\tfrac{V'(x)}{x^{p-1}}\right|+\tfrac{1}{p}\left\|\tfrac{V''(\cdot)(\cdot)+V'(\cdot)}{(\cdot)^{p-1}}\right\|_{L^{\infty}((q_{\min},\|q_{0}\|_{L^{\infty}(\R)}))} 
\eqqcolon\kappa>0\).\label{item:2_2}
\end{enumerate}
\end{itemize}
Then, the nonlocal term $W_{\eta}\coloneqq W_{p,\eta}[q_{\eta},\exp(-\cdot)]$ satisfies the following inequality: 
\[
    \tfrac{W_{\eta}^p(t,x)-W_{\eta}^p(t,y)}{x-y}\geq -\tfrac{1}{\kappa t}, \quad \forall (x,y)\in\R^{2},\ x\neq y,\ t\in[0,T]
\]
implying for \(p>1\)
\[
  \tfrac{W_{\eta}(t,x)-W_{\eta}(t,y)}{x-y}\geq -\tfrac{1}{p q_{\max}^{p-1}\kappa t} \quad \forall (x,y)\in\R^{2},\ x\neq y,\ t\in[0,T]
\]
and for \(p<1\)
\[
  \tfrac{W_{\eta}(t,x)-W_{\eta}(t,y)}{x-y}\geq -\tfrac{1}{p q_{\min}^{p-1}\kappa t} \quad \forall (x,y)\in\R^{2},\ x\neq y,\ t\in[0,T].
\]
\end{lem}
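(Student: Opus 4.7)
The strategy is classical: derive a Riccati-type differential inequality for the spatial infimum of the quantity $u := \partial_x W_\eta^p$, then apply the scalar ODE comparison principle. The pointwise bound on $W_\eta$ itself will follow from the bound on $W_\eta^p$ by the mean-value theorem combined with the maximum principle in \cref{theo:existence_uniqueness_maximum_principle}.

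First, to make all pointwise manipulations legal, I would reduce to classical solutions by smoothing the initial datum via \cref{lem:approx} and invoking the stability statement in \cref{theo:existence_uniqueness_maximum_principle}; the resulting one-sided inequality is preserved under the final $L^1_{\loc}$-limit. Starting from the PDE for $W_\eta^p$ recorded in \cref{rem:PDE_nonlocal_p}, I differentiate in $x$, substitute $\partial_x W_\eta = u/(pW_\eta^{p-1})$ in the local terms, and integrate by parts in the two integral terms using $\tfrac{d}{dy}\exp(\tfrac{p(x-y)}{\eta}) = -\tfrac{p}{\eta}\exp(\tfrac{p(x-y)}{\eta})$ so that the resulting boundary contributions at $y=x$ combine cleanly. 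After collecting like terms, the evolution of $u$ takes the schematic form
\[
\partial_t u + V(W_\eta)\,\partial_x u \;=\; B(t,x)\, u^2 + \mathcal I(t,x),
\]
where the local coefficient $B$ is built precisely from the algebraic combination $V''(W_\eta) W_\eta - (p-2)V'(W_\eta)$ divided by $pW_\eta^{2p-1}$ (and, analogously, $V''(W_\eta)W_\eta + V'(W_\eta)$ in the regime $0<p<1$), while $\mathcal I$ is a nonlocal residual that can be written as a nonnegative linear combination of $\exp(\tfrac{p(x-y)}{\eta})$-weighted quantities as soon as the sign hypothesis in item (1) (respectively its $p<1$ counterpart) is imposed.

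Second, set $\phi(t):=\inf_{x\in\R} u(t,x)$ and assume $\phi(t)<0$ (there is nothing to show otherwise). At a spatial minimizer $x^\ast(t)$, $\partial_x u(t,x^\ast)=0$, so the transport term drops out, $\mathcal I(t,x^\ast)\geq 0$ by the sign hypothesis, and the local coefficient is bounded below by $\kappa$ as defined in item (2), where $\kappa$ emerges as a net positive margin after the subtraction between the ``good'' pointwise contribution $|V'(W_\eta) W_\eta^{1-p}|$ and the ``bad'' remainder coming from the integration by parts. Applying a Danskin-type envelope argument to $\phi$ (or working on the smoothed approximants directly) yields $\dot\phi(t)\geq \kappa\,\phi(t)^2$. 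The scalar ODE $\dot z = \kappa z^2$, $z(0^+)=-\infty$, has unique forward solution $z(t)=-\tfrac{1}{\kappa t}$, and the standard comparison principle gives $\phi(t)\geq -\tfrac{1}{\kappa t}$. Integrating $u$ between $y$ and $x$ then transfers this into the claimed lower bound on the difference quotient of $W_\eta^p$.

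Finally, the two companion bounds on $W_\eta$ itself follow from the mean-value identity $W_\eta^p(t,x)-W_\eta^p(t,y) = p\,\xi^{p-1}(W_\eta(t,x)-W_\eta(t,y))$ with $\xi$ between $W_\eta(t,x)$ and $W_\eta(t,y)$, together with the uniform bound $q_{\min}\leq W_\eta\leq \|q_0\|_{L^\infty(\R)}$ (inherited from the maximum principle), dividing by the appropriate extremal value of $p\xi^{p-1}$ in each regime. The main obstacle is the first step: producing the precise algebraic combination that becomes $B$, and verifying that $\mathcal I$ has a favourable sign at the minimum point, requires careful bookkeeping of the two differentiated integral terms in \cref{rem:PDE_nonlocal_p} — and the sign of the curvature factor $(p-1)$ is what forces the two regimes $p>1$ and $p<1$ to be proved separately with the two different combinations $V''(\cdot)(\cdot)-(p-2)V'(\cdot)$ versus $V''(\cdot)(\cdot)+V'(\cdot)$.
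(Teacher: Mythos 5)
Your proposal follows the same route as the paper: differentiate the PDE for \(W_\eta^p\) from \cref{rem:PDE_nonlocal_p} in \(x\), evaluate at a spatial minimizer of \(u=\partial_x W_\eta^p\) (where the transport term drops), derive a Riccati inequality \(\dot\phi\geq\kappa\phi^2\) for \(\phi(t)=\inf_x u(t,x)\), and close via ODE comparison. However, your schematic decomposition \(\partial_t u + V\partial_x u = B u^2 + \mathcal I\) with ``\(\mathcal I(t,x^\ast)\geq 0\)'' does not reflect the actual bookkeeping, and you have also misidentified where the combination \(V''(\cdot)(\cdot)-(p-2)V'(\cdot)\) enters. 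In the paper's argument, the good local quadratic coefficient is \(|V'(W_\eta)|W_\eta^{1-p}\), not the combination you named; that combination appears inside the \emph{nonlocal} integrand after an integration by parts, and the sign hypothesis (item 1) is used to guarantee that, once one replaces \(\partial_y W_\eta^p\) by its spatial minimum \(\partial_x W_\eta^p(t,x)\), the resulting nonlocal term is bounded \emph{below} by \(-\tfrac{1}{p}\|{(V''(\cdot)(\cdot)-(p-2)V'(\cdot))/(\cdot)^{p-1}}\|_{L^\infty}(\partial_x W_\eta^p)^2\), i.e.\ it is a \emph{negative} contribution that must be subtracted from the good local coefficient. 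Your own parenthetical remark that ``\(\kappa\) emerges as a net positive margin after the subtraction'' is the right picture, but it is incompatible with \(\mathcal I\geq 0\); if the nonlocal residual were nonnegative at the minimizer, the strict positivity hypothesis in item (2) would be vacuous. Also worth flagging: the paper's estimate uses the pointwise identity \(p W_\eta - (p-1)\eta\partial_x W_\eta = p W_\eta^{1-p} q_\eta^p / W_\eta^{1-p}\geq 0\) (via \cref{eq:nonlocal_identity}) to factor the integrand before the minimality argument, a step you elide. None of this changes the conclusion, but the ``main obstacle'' you anticipated is exactly these two bookkeeping points, so be precise about them when you write the argument out in full.
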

\begin{proof}
We take the nonlocal equation in \(W^{p}\), i.e.,\ \cref{eq:nonlocal_W_p}
\begin{align*}
\partial_{t}W^{p}_{\eta}(t,x)&=-V(W_{\eta}(t,x))\partial_{x}W^{p}_{\eta}(t,x)\\
        &\quad -\tfrac{p^{2}}{\eta}\int_{x}^{\infty}\exp(\tfrac{px-py}{\eta})V'(W_{\eta}(t,y))\partial_{y}W_{\eta}(t,y)W_{\eta}(t,y)^{p}\dd y\\
    &\quad +p(p-1)\int_{x}^{\infty}\!\!\!\!\!\!\exp(\tfrac{px-py}{\eta})V'(W_{\eta}(t,y))W_{\eta}^{p-1}(t,y)\big(\partial_{y}W_{\eta}(t,y)\big)^{2}\dd y
\end{align*}
and its spatial derivative, which yields
\begin{align*}
\partial_{t}\partial_{x}W^{p}_{\eta}(t,x)&=-\tfrac{\dd}{\dd x}\Big(V(W_{\eta}(t,x))\partial_{x}W_{\eta}^{p}(t,x)\Big)+\tfrac{p^{2}}{\eta}V'(W_{\eta}(t,x))\partial_{x}W_{\eta}(t,x)W_{\eta}(t,x)^{p}\\
&\quad-\tfrac{p^{3}}{\eta^{2}}\int_{x}^{\infty}\!\!\!\!\!\exp(\tfrac{px-py}{\eta})V'(W_{\eta}(t,y))\partial_{y}W_{\eta}(t,y)W_{\eta}(t,y)^{p}\dd y\\
    &\quad-p(p-1)V'(W_{\eta}(t,x))W_{\eta}(t,x)^{p-1}\big(\partial_{x}W_{\eta}(t,x)\big)^{2}\\
    &\quad +\tfrac{p^{2}(p-1)}{\eta}\int_{x}^{\infty}\!\!\!\!\!\!\exp(\tfrac{px-py}{\eta})V'(W_{\eta}(t,y))W_{\eta}^{p-1}(t,y)\big(\partial_{y}W_{\eta}(t,y)\big)^{2}\dd y\\
    &=-V(W_{\eta}(t,x))\partial_{x}^{2}W_{\eta}^{p}-V'(W_{\eta}(t,x))\partial_{x}W_{\eta}\partial_{x}W_{\eta}^{p}+\tfrac{p}{\eta}V'(W_{\eta}(t,x))W_{\eta}(t,x)\partial_{x}W_{\eta}^{p}\\
&\quad-\tfrac{p^{2}}{\eta^{2}}\int_{x}^{\infty}\!\!\!\!\!\exp(\tfrac{px-py}{\eta})V'(W_{\eta}(t,y))W_{\eta}(t,y)\partial_{y}W_{\eta}(t,y)^{p}\dd y\\
    &\quad-(p-1)V'(W_{\eta}(t,x))\partial_{x}W_{\eta}^{p}\partial_{x}W_{\eta}\\
    &\quad +\tfrac{p(p-1)}{\eta}\int_{x}^{\infty}\!\!\!\!\!\!\exp(\tfrac{px-py}{\eta})V'(W_{\eta}(t,y))\partial_{y}W_{\eta}^{p}(t,y)\partial_{y}W_{\eta}(t,y)\dd y.
    &\intertext{Assuming that we are minimal at \((t,x)\),\ i.e. \(\partial_{x}^{2}W^{p}(t,x)=0\) and \(\partial_{x}W^{p}(t,x)<0\)}
    &=-pV'(W_{\eta}(t,x))\partial_{x}W_{\eta}\partial_{x}W_{\eta}^{p}+\tfrac{p}{\eta}V'(W_{\eta}(t,x))W_{\eta}(t,x)\partial_{x}W_{\eta}^{p}\\
    &\quad-\tfrac{p}{\eta^{2}}\int_{x}^{\infty}\!\!\!\!\!\exp(\tfrac{px-py}{\eta})V'(W_{\eta}(t,y))\partial_{y}W_{\eta}(t,y)^{p}\Big(pW_{\eta}-(p-1)\eta \partial_{y}W\Big)\dd y
    \intertext{and as \(p>1\) and \(pW_{\eta}-(p-1)\eta \partial_{y}W_{\eta}\geq 0\)}
    &\geq -pV'(W_{\eta}(t,x))\partial_{x}W_{\eta}\partial_{x}W_{\eta}^{p}+\tfrac{p}{\eta}V'(W_{\eta}(t,x))W_{\eta}\partial_{x}W_{\eta}^{p}\\
    &\quad-\tfrac{p}{\eta^{2}}\partial_{x}W_{\eta}^{p}\int_{x}^{\infty}\!\!\!\!\!\exp(\tfrac{px-py}{\eta})V'(W_{\eta}(t,y))\Big(pW_{\eta}-(p-1)\eta \partial_{y}W\Big)\dd y\\
    &= -pV'(W_{\eta}(t,x))\partial_{x}W_{\eta}\partial_{x}W_{\eta}^{p}+\tfrac{p}{\eta}V'(W_{\eta}(t,x))W_{\eta}\partial_{x}W_{\eta}^{p}\\
    &\quad-\tfrac{p^{2}}{\eta^{2}}\partial_{x}W_{\eta}^{p}\int_{x}^{\infty}\!\!\!\!\!\exp(\tfrac{px-py}{\eta})V'(W_{\eta}(t,y))W_{\eta}\dd y\\
    &\quad+\tfrac{p(p-1)}{\eta}\partial_{x}W_{\eta}^{p}\int_{x}^{\infty}\!\!\!\!\!\exp(\tfrac{px-py}{\eta})V'(W_{\eta}(t,y))\partial_{y}W\dd y.
    \intertext{Integrating by parts in the first integral term}
    &= -pV'(W_{\eta}(t,x))\partial_{x}W_{\eta}\partial_{x}W_{\eta}^{p}\\
    &\quad-\tfrac{p}{\eta}\partial_{x}W_{\eta}^{p}\int_{x}^{\infty}\!\!\!\!\!\exp(\tfrac{px-py}{\eta})\partial_{y}\big(V'(W_{\eta}(t,y))W_{\eta}\big)\dd y\\
    &\quad+\tfrac{p(p-1)}{\eta}\partial_{x}W_{\eta}^{p}\int_{x}^{\infty}\!\!\!\!\!\exp(\tfrac{px-py}{\eta})V'(W_{\eta}(t,y))\partial_{y}W\dd y\\
    &= -pV'(W_{\eta}(t,x))\partial_{x}W_{\eta}\partial_{x}W_{\eta}^{p}\\
    &\quad-\tfrac{p}{\eta}\partial_{x}W_{\eta}^{p}\int_{x}^{\infty}\!\!\!\!\!\exp(\tfrac{px-py}{\eta})W_{\eta,y}\big(V'(W_{\eta}(t,y))+V''(W_{\eta})W_{\eta}-(p-1)V'(W_{\eta})\big)\dd y\\
    &= -pV'(W_{\eta}(t,x))\partial_{x}W_{\eta}\partial_{x}W_{\eta}^{p}-\tfrac{p}{\eta}\partial_{x}W_{\eta}^{p}\int_{x}^{\infty}\!\!\!\!\!\e^{\frac{px-py}{\eta}}\partial_{y}W_{\eta}\big(V''(W_{\eta})W_{\eta}-(p-2)V'(W_{\eta})\big)\dd y\\
    &= \tfrac{-V'(W_{\eta}(t,x))}{W_{\eta}^{p-1}}\Big(\partial_{x}W_{\eta}^{p}\Big)^{2}-\tfrac{1}{\eta}\partial_{x}W_{\eta}^{p}\int_{x}^{\infty}\!\!\!\!\!\e^{\frac{px-py}{\eta}}\tfrac{\partial_{y}(W_{\eta}^{p})}{W^{p-1}_{\eta}}\big(V''(W_{\eta})W_{\eta}-(p-2)V'(W_{\eta})\big)\dd y
    \intertext{and for \(V''(\cdot)(\cdot)-(p-2)V'(\cdot)\geq 0\) and \(I(q_{0})\coloneqq (q_{\min},\|q_{0}\|_{L^{\infty}(\R)})\subset\R\)}
    &\geq \tfrac{-V'(W_{\eta}(t,x))}{W_{\eta}^{p-1}}\Big(\partial_{x}W_{\eta}^{p}\Big)^{2}-\tfrac{1}{\eta}\Big(\partial_{x}W_{\eta}^{p}\Big)^{2}\!\!\!\int_{x}^{\infty}\!\!\!\!\!\e^{\frac{px-py}{\eta}}\tfrac{1}{W^{p-1}_{\eta}}\big(V''(W_{\eta})W_{\eta}-(p-2)V'(W_{\eta})\big)\dd y\\
    &\geq \tfrac{-V'(W_{\eta}(t,x))}{W_{\eta}^{p-1}}\Big(\partial_{x}W_{\eta}^{p}\Big)^{2}-\tfrac{1}{p}\Big(\partial_{x}W_{\eta}^{p}\Big)^{2}\Big\|\tfrac{V''(\cdot)(\cdot)-(p-2)V'(\cdot)}{(\cdot)^{p-1}}\Big\|_{L^{\infty}(I(q_{0}))}\\
&\geq\Big(\partial_{x}W_{\eta}^{p}\Big)^{2} \essinf_{x\in I(q_{0})}|V'(x)x^{1-p}|-\tfrac{1}{p}\Big(\partial_{x}W_{\eta}^{p}\Big)^{2}\Big\|\tfrac{V''(\cdot)(\cdot)-(p-2)V'(\cdot)}{(\cdot)^{p-1}}\Big\|_{L^{\infty}(I(q_{0}))}\\
&=\Big(\partial_{x}W_{\eta}^{p}\Big)^{2}\bigg(\essinf_{x\in I(q_{0})}|V'(x)x^{1-p}|-\tfrac{1}{p}\big\|\tfrac{V''(\cdot)(\cdot)-(p-2)V'(\cdot)}{(\cdot)^{p-1}}\big\|_{L^{\infty}(I(q_{0}))} \bigg).
\end{align*}
        Thus, whenever we have
        \begin{enumerate}
        \item \(V''(\cdot)(\cdot)-(p-2)V'(\cdot)\geq 0\) on \([q_{\min},\|q_{0}\|_{L^{\infty}(\R)}]=I(q_{0})\)
        \item \(\essinf_{x\in(q_{\min},\|q_{0}\|_{L^{\infty}(\R)})}|V'(x)x^{1-p}|-\tfrac{1}{p}\Big\|\tfrac{V''(\cdot)(\cdot)-(p-2)V'(\cdot)}{(\cdot)^{p-1}}\Big\|_{L^{\infty}((q_{\min},\|q_{0}\|_{L^{\infty}(\R)}))}>0\),
        \end{enumerate}
        we have an Ole\u inik estimate as stated for \(p>1\).
        
        Let us analyze now the case $0<p<1.$ Taking advantage of \cref{eq:partial_t_x_W_p},
        we assume that we are minimal at \((t,x)\),\ i.e. \(\partial_{x}^{2}W^{p}(t,x)=0\) and \(\partial_{x}W^{p}(t,x)<0\). Then, we can estimate
\begin{align*}
    \partial_{t}\partial_{x}W^{p}_{\eta}(t,x)&=-V'(W)W_{x}\partial_{x}W^{p}(t,x)+\tfrac{p}{\eta}V'(W)W\partial_{x}W^{p}-(p-1)V'(W)\partial_{x}W^{p}\partial_{x}W\\
&\ -\tfrac{p^{2}}{\eta^{2}}\int_{x}^{\infty}\exp(\tfrac{px-py}{\eta})V'(W)\partial_{y}W^{p}W\dd y  \\
&\ +\tfrac{p(p-1)}{\eta}\int_{x}^{\infty}\exp(\tfrac{px-py}{\eta})V'(W)\partial_{y}W^{p}W_{y}\dd y  \\
&\geq -pV'(W)W_{x}\partial_{x}W^{p}(t,x)+\tfrac{p}{\eta}V'(W)W\partial_{x}W^{p} -\tfrac{p^{2}}{\eta^{2}}\partial_{x}W^{p}\int_{x}^{\infty}\exp(\tfrac{px-py}{\eta})V'(W)W\dd y  \\
&=-pV'(W)W_{x}\partial_{x}W^{p}(t,x)+\tfrac{p}{\eta}V'(W)W\partial_{x}W^{p} -\tfrac{p}{\eta}\partial_{x}W^{p}V'(W)W\\
&\quad -\tfrac{p}{\eta}\partial_{x}W^{p}\int_{x}^{\infty}\exp(\tfrac{px-py}{\eta})\tfrac{\dd}{\dd y}\big(V'(W)W)\dd y\\
&=-pV'(W)W_{x}\partial_{x}W^{p}(t,x)-\tfrac{p}{\eta}\partial_{x}W^{p}\int_{x}^{\infty}\exp(\tfrac{px-py}{\eta})\big(V''(W)W+V'(W))W_{y}\dd y.
\intertext{If \(V''(x)x+V'(x)\geq 0,\)}
&\geq -pV'(W)W_{x}\partial_{x}W^{p}(t,x)-\tfrac{1}{\eta}(\partial_{x}W^{p})^{2}\int_{x}^{\infty}\exp(\tfrac{px-py}{\eta})\tfrac{V''(W)W+V'(W)}{W^{p-1}}\dd y\\
&\geq -\tfrac{V'(W)}{W^{p-1}}\big(\partial_{x}W^{p}(t,x)\big)^{2}-\tfrac{1}{p}(\partial_{x}W^{p})^{2}\Big\|\tfrac{V''(W)W+V'(W)}{W^{p-1}}\Big\|_{L^{\infty}((q_{\min},\|q_{0}\|_{L^{\infty}(\R)}))}.
\end{align*} 
Thus, whenever we have
        \begin{enumerate}
        \item \(V''(\cdot)(\cdot)+V'(\cdot)\geq 0,\)
        \item \(\essinf_{x\in(q_{\min},\|q_{0}\|_{L^{\infty}(\R)})}
 \left|\tfrac{V'(x)}{x^{p-1}}\right|+\tfrac{1}{p}\Big\|\tfrac{V''(\cdot)(\cdot)+V'(\cdot)}{(\cdot)^{p-1}}\Big\|_{L^{\infty}((q_{\min},\|q_{0}\|_{L^{\infty}(\R)}))} 
\eqqcolon\kappa>0\),
\end{enumerate}
        we have an Ole\u inik estimate as stated for \(0<p<1\).\\
        \end{proof}
\begin{rem}[Examples of eligibility]
Consider a linear velocity, i.e., \(V(x)=1-x,\ x\in[0,1]\). Then, \cref{item:1} is satisfied as long as \(p\geq 2\). \cref{item:2} reduces to
\[
\essinf_{x\in(q_{\min},\|q_{0}\|_{L^{\infty}(\R)})}|x^{1-p}|-\tfrac{(p-2)}{p}\big\|\tfrac{1}{(\cdot)^{p-1}}\big\|_{L^{\infty}((q_{\min},\|q_{0}\|_{L^{\infty}(\R)}))}>0
\]
which is clearly satisfied if
\[
\tfrac{1}{\|q_{0}\|_{L^{\infty}(\R)}^{p-1}}>\tfrac{p-2}{p}\tfrac{1}{q_{\min}^{p-1}}\Longleftrightarrow \tfrac{q_{\min}}{q_{\max}}> \sqrt[p-1]{\tfrac{p-2}{p}}
\]
This leaves no restriction for \(p=2\) except for \(q_{\min}>0\). For \(p\rightarrow\infty\) the right hand side converges from below to \(1\), allowing only constant data, however, for smaller \(p\), there are nontrivial initial data satisfying the inequality.
\end{rem}
\begin{theo}[Singular limit in the Ole\u inik case]
Let the assumptions of \cref{thm:oleinik} hold and assume in addition that the ``flux'' \( [q_{\min},\|q_{0}\|_{L^{\infty}(\R)}]\ni x\mapsto xV(x)\) is strictly concave. Then, the nonlocal operator \(W_{\eta}\) as well as the corresponding solution \(q_{\eta}\) converge in \(C\big([0,T];L^{1}_{\text{loc}}(\R)\big)\) toward the unique weak entropy solution of the correpsonding local conservation law in \cref{defi:local_conservation_law}.
\end{theo}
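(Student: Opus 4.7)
The plan is to substitute the one-sided Lipschitz bound of \cref{thm:oleinik} for the \(TV\)-estimate of \cref{theo:TV_bounds_Wp}, then rerun the convergence scheme of \cref{cor:compactnessWp,thm:strong_convergence,theo:convergence_nonlocal_local_weak}, and finally identify the limit as the unique entropy solution via the classical equivalence between the Ole\u inik one-sided Lipschitz condition and Kru\v{z}kov entropy admissibility for scalar laws with strictly concave flux.

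First, I would recover compactness in \(C([0,T];L^{1}_{\loc}(\R))\) from the Ole\u inik bound alone. For any bounded interval \(I\subset\R\) and any \(t\in(0,T]\), the family \(\{W_{\eta}^{p}(t,\cdot)\}_{\eta>0}\) has uniformly bounded \(TV(I)\): its positive variation is controlled by \(\tfrac{|I|}{\kappa t}\) via \cref{thm:oleinik}, and combined with the \(L^{\infty}\)-bound of \cref{theo:existence_uniqueness_maximum_principle} this yields the full total variation. The time-equicontinuity argument of \cref{cor:compactnessWp} requires only \(L^{\infty}\)-control on \(W_{\eta}\) and on \(\eta\partial_{x}W_{\eta}\), the latter being an immediate consequence of the identity \(q_{\eta}^{p}=W_{\eta}^{p}-\eta W_{\eta}^{p-1}\partial_{x}W_{\eta}\) in \cref{eq:nonlocal_identity} and the maximum principle, and therefore carries over verbatim. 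Extracting a subsequence \(\eta_{k}\to 0\) gives \(W_{\eta_{k}}^{p}\to q_{*}^{p}\) in \(C([0,T];L^{1}_{\loc}(\R))\); the Lipschitz continuity of \(x\mapsto x^{1/p}\) on \([q_{\min}^{p},\|q_{0}\|_{L^{\infty}(\R)}^{p}]\), together with \cref{eq:nonlocal_identity} and the uniform bound \(\eta\|\partial_{x}W_{\eta}\|_{L^{\infty}}\leq C\), then promotes this to convergence of both \(q_{\eta_{k}}\) and \(W_{\eta_{k}}\) to the common limit \(q_{*}\), as in \cref{thm:strong_convergence}. Passing to the limit in the weak formulation as in \cref{theo:convergence_nonlocal_local_weak} identifies \(q_{*}\) as a weak solution of \cref{eq:pnorm_local_problem}.

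Second, the one-sided Lipschitz estimate passes to the limit since it is equivalent to the distributional inequality \(\partial_{x}W_{\eta}\geq -C/t\), which is stable under \(L^{1}_{\loc}\)-convergence. Consequently, \(q_{*}(t,\cdot)\) satisfies a one-sided Lipschitz bound with constant \(C/t\) for every \(t\in(0,T]\). Under the additional assumption that \(x\mapsto xV(x)\) is strictly concave on \([q_{\min},\|q_{0}\|_{L^{\infty}(\R)}]\), the classical Ole\u inik result (see \cite{oleinik_english}, or \cite{bressan,Dafermos2010} for modern accounts) asserts that any weak solution of such a genuinely nonlinear scalar conservation law satisfying a \(C/t\) one-sided Lipschitz bound is exactly the unique Kru\v{z}kov entropy solution of \cref{defi:local_conservation_law}. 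Uniqueness of the limit then upgrades the subsequential convergence to convergence of the full family \(\eta\to 0\).

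The main obstacle I expect is the transfer of the Ole\u inik bound from \(W_{\eta}^{p}\) to \(W_{\eta}\) (and via \cref{eq:nonlocal_identity} to \(q_{\eta}\)) in the regime \(p<1\), where the chain rule through \(x\mapsto x^{1/p}\) is only admissible thanks to the strict positivity \(q_{\min}>0\); relaxing this lower bound would break this step. A secondary but purely technical point is to verify that the \(C/t\)-factor in the one-sided Lipschitz bound is genuinely preserved in the distributional passage to the limit, which is standard but needs to be spelled out carefully.
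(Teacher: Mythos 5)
Your argument follows the same route the paper takes, which the paper itself only sketches (it delegates the details to the $p=1$ treatment in \cite{coclite2023oleinik}): use the one-sided Lipschitz bound of \cref{thm:oleinik} to generate uniform local $TV$ control and hence compactness, identify the limit as a weak solution of \cref{eq:pnorm_local_problem}, and then invoke the classical Ole\u inik criterion to single out the entropy solution. Your proposal simply spells out the steps the paper leaves implicit, and your closing remarks correctly flag the two genuinely technical points: the passage from $W_{\eta}^{p}$ to $W_{\eta}$ (which is harmless here precisely because \cref{thm:oleinik} already states the OSL bound for $W_{\eta}$ itself, using the $q_{\min}$/$q_{\max}$ bounds), and the $t$-dependence of the Ole\u inik constant, which degenerates at $t=0$ and therefore has to be combined with the initial-time $TV$ control from \cref{eq:uniform_TV_bound_initial_uniform_eta} when establishing time-equicontinuity near $t=0$. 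One small imprecision: you say the equicontinuity estimate of \cref{cor:compactnessWp} ``requires only $L^{\infty}$-control on $W_{\eta}$ and on $\eta\partial_{x}W_{\eta}$,'' but that estimate in fact also invokes a uniform $TV$ bound on $W_{\eta}^{p}$; the point is that this bound is now supplied locally by the OSL inequality rather than globally by \cref{theo:TV_bounds_Wp}, so the statement should be rephrased but the conclusion stands.
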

\begin{proof}
The OSL condition together with the uniform bounds gives uniform \(TV\) bounds on any bounded set implying, particularly by means of compactness, strong convergence to a weak solution of the local conservation law. \cite{oleinik,oleinik_english} then guarantees by means of the OSL condition the entropy admissibility. For details, we refer to \cite{coclite2023oleinik} where these arguments for the case \(p=1\) have been carried out in greater depth.
\end{proof}

\section{Existence of solutions for initial data not being bounded away from zero}\label{sec:existence_zero_initial_datum}
In this section, we will investigate the existence of solutions in the case where the initial datum is not bounded away from zero as assumed before in \cref{sec:existence_uniqueness}. As we want to keep the nonlocal operator from becoming zero on compact sets (as it then remains locally Lipschitz-continuous in the spatial variable), we focus only on one-sided nonlocal kernels with \textit{infinite} length of support and initial datum, bounded away from zero close to infinity (and zero close to \(-\infty\)). Under these conditions, we are able to show the existence of weak solutions. Regarding the uniqueness of solutions, since we can only work on compact domains (as the lower bound of the nonlocal operator does not hold close to \(-\infty\)), we can only partially use the methodology of \cref{sec:existence_uniqueness} which prevents us from proving a general result of uniqueness.

\begin{theo}[Existence of solutions for initial data becoming zero]\label{theo:existence_zero}
    Let \cref{ass:input_datum_and_velocity} hold but replace the lower bound on the initial datum, i.e., \(q_{0}\geqq q_{\min}\) by either
    \begin{enumerate}
        \item 
        \(
\exists q_{\min}\in\R_{>0},\ \exists R_{\text{l}},R_{\text{r}}\in\R:\ \big(q_{0}\geq q_{\min}\ \text{ for } x\in\R_{>R_{\text{r}}}\ \text{a.e.}\big)\ \wedge\ \big(q_{0}\equiv 0\ \text{a.e.}\ x\in \R_{<R_{\text{l}}}\big)
    \)\label{item:1_zero}
    \item or \(\exists q_{\min}\in\R_{>0},\ K\subset\R \text{ compact}:\ \big(\supp(q_{0})=\R\setminus K\big)\ \wedge\ \big(q_{0}|_{\supp(q_{0})}\geq q_{\min}\big)\).\label{item:2_zero}
    \end{enumerate}
    Assume, in addition, that the nonlocal kernel \(\gamma\) satisfies \(\gamma(x)>0\ \forall x\in\R_{>0}\), i.e., has infinite support and that \(p\in[1,\infty)\). Then, there exists a weak solution \(q_{\eta}\in C\big([0,T];L^{1}_{\text{loc}}(\R)\big)\) in the sense of \cref{defi:weak_solution} to the nonlocal \(p-\)norm conservation law as presented in \cref{defi:model_class}.
    Moreover, the solution satisfies the following min/max principle: 
    \[0\leqq q_{\eta}\leqq \|q_{0}\|_{L^{\infty}(\R)}\ \text{ on } (t,x) \text{ a.e.}
    \]
    In the case of \cref{item:2}, the solution is unique. 
\end{theo}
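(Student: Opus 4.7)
The strategy I would take is an approximation argument that reduces to \cref{theo:existence_uniqueness_maximum_principle}. I would define the truncated initial data \(q_{0,n}\:\max(q_{0},1/n)\), which satisfies \(q_{0,n}\geq 1/n\), \(\|q_{0,n}\|_{L^{\infty}(\R)}\leq \|q_{0}\|_{L^{\infty}(\R)}+1/n\), \(|q_{0,n}|_{TV(\R)}\leq |q_{0}|_{TV(\R)}\) (truncation from below being \(1\)-Lipschitz does not increase the \(TV\)-seminorm), and \(q_{0,n}\to q_{0}\) in \(L^{1}_{\loc}(\R)\). Then \cref{theo:existence_uniqueness_maximum_principle} yields unique solutions \(q_{\eta,n}\in C([0,T];L^{1}_{\loc}(\R))\) obeying \(1/n\leq q_{\eta,n}\leq \|q_{0}\|_{L^{\infty}(\R)}+1/n\).

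The key step is then to obtain uniform-in-\(n\) \(TV\) and temporal regularity bounds on \(q_{\eta,n}\) over compact subsets of \(\OT\). This hinges on the observation that, thanks to the infinite support of \(\gamma\) and the fact that \(q_{0}\geq q_{\min}\) on an unbounded spatial region (namely \((R_{\text{r}},\infty)\) in case \ref{item:1_zero} and \(\R\setminus K\) in case \ref{item:2_zero}), the nonlocal term \(W_{p,\eta}[q_{\eta,n},\gamma]\) admits a strictly positive lower bound on every compact space-time set, \emph{independently} of \(n\). Indeed, discarding the contribution of the integrand for \(y\in(x,R_{\text{r}})\), one obtains
\[
W_{p,\eta}[q_{\eta,n},\gamma](t,x)\geq q_{\min}\Big(\tfrac{1}{\eta}\int_{\max(x,R_{\text{r}})}^{\infty}\gamma\big(\tfrac{y-x}{\eta}\big)^{p}\dd y\Big)^{1/p},
\]
and the fact that \(q_{\eta,n}\geq q_{\min}\) far downstream is preserved along the dynamics via the maximum principle argument of \cref{theo:existence_uniqueness_maximum_principle} applied on a right semi-infinite region. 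This yields uniform-in-\(n\) Lipschitz control of the velocity field \(V(W_{p,\eta}[q_{\eta,n},\gamma])\) on compact sets, whence the characteristics representation and the standard arguments of \cite{pflug,Keimer2023} deliver the required uniform \(TV\) and time-equicontinuity bounds.

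A diagonal compactness argument over an exhausting sequence of compact subsets of \(\R\) extracts a subsequence converging in \(C([0,T];L^{1}_{\loc}(\R))\) to some \(q_{\eta}\); the min/max principle \(0\leq q_{\eta}\leq \|q_{0}\|_{L^{\infty}(\R)}\) is inherited in the limit, and passing to the limit in the weak formulation of \cref{defi:weak_solution} is routine using strong \(L^{1}_{\loc}\) convergence, uniform \(L^{\infty}\) boundedness, and continuity of the nonlocal operator in these topologies.

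For uniqueness in case \ref{item:2_zero}, the compact zero-set \(K\) of \(q_{0}\) is transported by the characteristics of any weak solution to a compact zero-set \(K(t)\) for each \(t\in[0,T]\) (the support structure being preserved along characteristics of a transport-type equation with Lipschitz velocity), with \(q_{\eta}\geq c_{T}>0\) uniformly outside \(\bigcup_{t\in[0,T]}\{t\}\times K(t)\). Since \(K(t)\) remains compact and \(\gamma\) has infinite support, \(W_{p,\eta}[q_{\eta},\gamma]\) then admits a \emph{global} uniform positive lower bound, effectively restoring the lower-bound hypothesis of \cref{ass:input_datum_and_velocity} at the level of the nonlocal term. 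Uniqueness then follows by adaptation of the Banach fixed-point argument in the nonlocal term from the proof of \cref{theo:existence_uniqueness_maximum_principle}. The main obstacle will be establishing the uniform (in \(n\)) Lipschitz/TV control of the nonlocal operator on compact sets; this relies critically on leveraging the infinite support of the kernel together with the lower bound of the initial datum on an unbounded region, since the degeneracy of \(x\mapsto x^{1/p}\) at \(0\) (for \(p>1\)) precludes a direct argument and forces this more delicate lower-bound-based analysis.
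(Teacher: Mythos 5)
Your proposal takes a genuinely different route from the paper's. The paper does \emph{not} pass to a limit in a sequence of approximations; instead, for item~1 it perturbs the initial datum only on \(x<R_{\text{l}}\) (replacing \(q_0\) by \(q_0+\eps\) there), obtains the unique weak solution \(q_\eps\) by the fixed-point machinery of \cref{theo:existence_uniqueness_maximum_principle} (possible because the nonlocal term is then bounded away from zero on all of \(\R\)), and then observes, through the explicit characteristics representation, that the portion of \(q_\eps\) living on the set \(\{x\geq \xi_{w_\eps}(0,R_{\text{l}};t)\}\) is completely independent of \(\eps\) because the nonlocal operator only looks downstream. Gluing that portion with the zero function on its complement produces the weak solution directly, and the weak-solution identity is verified by a change of variables in the weak formulation. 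This is a direct construction with no compactness argument at all, which is the main advantage: one never has to produce uniform-in-\(\eps\) \(TV\) or temporal-equicontinuity estimates, nor pass to the limit in the nonlinear nonlocal term.

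By contrast, your proposal truncates \(q_{0,n}=\max(q_0,1/n)\) everywhere and then relies on a compactness/limit-extraction argument. Conceptually this can work, but there are two genuine gaps in the sketch. First, the uniform-in-\(n\) \(TV\) bound on \(q_{\eta,n}\) is nontrivial. The ``standard arguments of \cite{pflug,Keimer2023}'' yield \(TV\) bounds controlled by \(\|\partial_2 W_{p,\eta}\|_{L^\infty}\)-type quantities, which involve \(W_{p,\eta}^{1-p}\) and thus degenerate exactly where \(W_{p,\eta}\) approaches zero; your proposed lower bound on \(W_{p,\eta}[q_{\eta,n},\gamma]\) is only claimed on compact space-time sets, yet the nonlocal operator evaluated on a compact set depends on the solution over an unbounded downstream region. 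Controlling the spatial regularity of \(W_{p,\eta}\) uniformly in \(n\) on compacts is therefore not a purely local statement, and this is precisely the point the paper's construction is designed to avoid. Second, your claim that ``\(q_{\eta,n}\geq q_{\min}\) far downstream is preserved via the maximum principle applied on a right semi-infinite region'' is the crux and is not automatic: \cref{theo:existence_uniqueness_maximum_principle} gives a global minimum principle with lower bound \(\essinf q_{0,n}=1/n\), not \(q_{\min}\). Justifying the stronger localized lower bound requires exploiting the one-sidedness of the kernel (the solution to the right of the characteristic emanating from \(R_{\text{r}}\) depends only on \(q_0|_{(R_{\text{r}},\infty)}\) and hence coincides with the solution of a problem whose initial datum is \(\geq q_{\min}\) everywhere), which is exactly the observation the paper makes explicit through its characteristics/gluing argument. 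Spelling that out is needed before the rest of your compactness machinery can be put on firm footing.
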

\begin{proof}
We start with the first part, i.e.\ \cref{item:1_zero}. Let \(q_{0}\) be given, we consider the following approximation for the initial datum for a given \(\eps\in\R_{>0}\)
\begin{align*}
    q_{0,\eps}\coloneqq \begin{cases}
                q_{0}+\eps &x\in\R_{\leq R_{\text{l}}}\\
                q_{0}, &\text{otherwise}.
    \end{cases}
\end{align*}
Then, for \(\eta\) fixed, there exists a unique weak solution on any finite time horizon \(T\in\R_{>0}\) as the initial datum is bounded away from zero except for a compact set. This enables it to apply the fixed-point methods in \cref{theo:existence_uniqueness_maximum_principle} to obtain a unique weak solution \(q_{\eps}\in C([0,T];L^{1}_{\text{loc}}(\R))\).
We shortly sketch the boundedness away from zero of the nonlocal operator:
\begin{align*}
    W_{\eta}[q_{\eps},\gamma](t,x)^{p}&=\tfrac{1}{\eta}\int_{x}^{\infty}\gamma(\tfrac{y-x}{\eta})^{p}q_{\eps}(t,y)^{p}\dd y\geq \tfrac{1}{\eta}\int_{\xi_{\eps}(0,R_{\text{r}};T)}^{\infty}\gamma(\tfrac{y-x}{\eta})^{p}q_{\eta,\eps}(t,y)^{p}\dd y\\
    &\geq q_{\min}^{p}\tfrac{1}{\eta}\int_{\xi_{\eps}(0,R_{\text{r}};T)}^{\infty}\!\!\!\gamma(\tfrac{y-x}{\eta})^{p}\dd y=q_{\min}^{p}\|\gamma\|_{L^{p}\big(\big(\frac{\xi_{\eps}(0,R_{\text{r}};T)-x}{\eta},\infty\big)\big)}^{p}
\end{align*}
with \(\xi\) being the characteristics as in \cref{eq:characteristics}. This is not uniform in \(x\in\R\), as the left hand side converges to zero when \(x\rightarrow-\infty\). But, for \(x> 2R_{\text{l}}\), we have the lower bound
\[
    \essinf_{t\in[0,T],\ x\in (2R_{\text{l}},\infty)}W_{\eta}[q_{\eps},\gamma](t,x)^{p}\geq q_{\min}^{p}\|\gamma\|_{L^{p}\big(\big(\frac{\xi_{\eps}(0,R_{\text{r}};T)-2R_{\text{l}}}{\eta},\infty\big)\big)}^{p}
\]
and, thanks to the assumed monotonicity of \(V\), \cref{ass:input_datum_and_velocity}
\[
    \essinf_{t\in[0,T],\ x\in (2R_{\text{l}},\infty)}W_{\eta}[q_{\eps},\gamma](t,x)\geq q_{\min}\|\gamma\|_{L^{p}\big(\big(\frac{R_{\text{r}}+V(0)T-2R_{\text{l}}}{\eta},\infty\big)\big)}.
\]
Similarly, one can obtain a lower bound on \(W_{\eta}[q_{\eps}]\) for \(x\leq 2R_{\text{l}}\) which can be established as we have assumed the initial datum in this region to be \(\eps\in\R_{>0}\). With this in mind, we have the boundedness of \(\partial_{2}W_{\eta}\) as this boundedness could only be destroyed when the nonlocal operator could approach zero as
\begin{align*}
    \partial_{x}W_{\eta}[\tilde{q}_{\eta,\eps},\gamma](t,x)&=\tfrac{1}{p}W_{p}[\tilde{q}_{\eta,\eps},\gamma](t,x)^{1-p}\bigg(-\tfrac{p}{\eta^{2}}\int_{x}^{\infty}\gamma(\tfrac{y-x}{\eta})^{p-1}\gamma'(\tfrac{y-x}{\eta})\tilde{q}_{\eta,\eps}(t,y)^{p}\dd y-\tfrac{1}{\eta}\gamma(0)^{p}\tilde{q}_{\eta,\eps}(t,x)^{p} \bigg).
\end{align*}
Thus, we have a weak solution
\[
q_{\eps}(t,x)=q_{0,\eps}\big(\xi_{w_{\eps}}(t,x;0)\big)\partial_{2}\xi_{w_{\eps}}(t,x;0),\ (t,x)\in\OT.
\]
with \(w\) being the unique solution to the fixed-point equation in \cref{eq:fixed_point_1}, \cref{eq:fixed_point_2}.

Now, we claim that we obtain a solution to the problem stated in \cref{item:1_zero} by defining
\[
q(t,x)=\begin{cases}q_{\eps}(t,x) &t\in[0,T],\ x\in [\xi_{w_{\eps}}(0,x;R_{\text{l}}),\infty)\\
0 &\text{else}
\end{cases}.
\]
The rationality behind this approach lies in the fact that the nonlocal operator only looks ``to the right'' so that any changes on the left hand side will not affect the solution ``on the right'', and thus, as long as we set the solution to zero at this set, the originally built solution is not affected. This is proven rigorously next. Recall the weak formulation of solutions in \cref{defi:weak_solution}; it holds for \(\eps\in\R_{>0}\) and 
\(\forall\phi\in C^{1}_{\text{c}}((-42,T)\times\R)\) that
  \begin{align*}
      0&=\iint_{\OT}\Big(\phi_{t}(t,x)+V(W_{\eta}[q_{\eps},\gamma](t,x))\phi_{x}(t,x)\Big)q_{\eps}(t,x)\dd x\dd t+\int_{\R} q_{0,\eps}(x)\phi(0,x)\dd x\\
      &=\int_{0}^{T}\int_{\xi_{w}(0,R_{\text{l}};t)}^{\infty}\Big(\phi_{t}(t,x)+V(W_{\eta}[q_{\eps},\gamma](t,x))\phi_{x}(t,x)\Big)q_{\eps}(t,x)\dd x\dd t\\
      &\quad +\int_{0}^{T}\int_{-\infty}^{\xi_{w}(0,R_{\text{l}};t)}\Big(\phi_{t}(t,x)+V(W_{\eta}[q_{\eps},\gamma](t,x))\phi_{x}(t,x)\Big)q_{\eps}(t,x)\dd x\dd t+\int_{\R} q_{0,\eps}(x)\phi(0,x)\dd x
      \intertext{and plugging in the solution formula}
      &=\int_{0}^{T}\int_{\xi_{w}(0,R_{\text{l}};t)}^{\infty}\Big(\phi_{t}(t,x)+V(W_{\eta}[q_{\eps},\gamma](t,x))\phi_{x}(t,x)\Big)q_{0,\eps}(\xi_{\eps}(t,x;0))\partial_{2}\xi_{w}(t,x;0)\dd x\dd t\\
      &\quad +\int_{0}^{T}\int_{-\infty}^{\xi_{w}(0,R_{\text{l}};t)}\Big(\phi_{t}(t,x)+V(W_{\eta}[q_{\eps},\gamma](t,x))\phi_{x}(t,x)\Big)q_{\eps}(t,x)\dd x\dd t+\int_{\R} q_{0,\eps}(x)\phi(0,x)\dd x
      \intertext{and substituting}
      &=\int_{0}^{T}\int_{R_{\text{l}}}^{\infty}\Big(\partial_{1}\phi(t,\xi_{\eps}(0,y;t))+V(W_{\eta}[q_{\eps},\gamma](t,\partial_{2}\xi_{\eps}(0,y;t)))\partial_{2}\phi(t,\xi_{w}(0,y;t))\Big)q_{0,\eps}(y)\dd y\dd t\\
      &\quad +\int_{0}^{T}\int_{-\infty}^{\xi_{w}(0,R_{\text{l}};t)}\Big(\phi_{t}(t,x)+V(W_{\eta}[q_{\eps},\gamma](t,x))\phi_{x}(t,x)\Big)q_{\eps}(t,x)\dd x\dd t+\int_{\R} q_{0,\eps}(x)\phi(0,x)\dd x\\
      &=\int_{R_{\text{l}}}^{\infty}\int_{0}^{T}\tfrac{\dd}{\dd t} \phi(t,\xi_{\eps}(0,y;t))q_{0}(y)\dd y\\
      &\quad +\int_{0}^{T}\int_{-\infty}^{\xi_{w}(0,R_{\text{l}};t)}\Big(\phi_{t}(t,x)+V(W_{\eta}[q_{\eps},\gamma](t,x))\phi_{x}(t,x)\Big)q_{\eps}(t,x)\dd x\dd t+\int_{\R} q_{0,\eps}(x)\phi(0,x)\dd x
      \intertext{and, thanks to the assumptions on the test function and the characteristics,}
      &=-\int_{R_{\text{l}}}^{\infty}\phi(0,y)q_{0}(y)\dd y\\
      &\quad +\int_{0}^{T}\int_{-\infty}^{\xi_{w}(0,R_{\text{l}};t)}\Big(\phi_{t}(t,x)+V(W_{\eta}[q_{\eps},\gamma](t,x))\phi_{x}(t,x)\Big)q_{\eps}(t,x)\dd x\dd t+\int_{\R} q_{0,\eps}(x)\phi(0,x)\dd x\\
      &=\int_{0}^{T}\int_{-\infty}^{\xi_{w}(0,R_{\text{l}};t)}\Big(\phi_{t}(t,x)+V(W_{\eta}[q_{\eps},\gamma](t,x))\phi_{x}(t,x)\Big)q_{\eps}(t,x)\dd x\dd t+\int_{-\infty}^{R_{\text{l}}} q_{0,\eps}(x)\phi(0,x)\dd x.
\end{align*}
Now, we can make the last two terms zero if we just set the initial datum in this area to zero as well as the solution. However, this was exactly the previously mentioned construction. Thus, we have built a solution to the initial value problem for the initial datum being bounded away from zero for \(x\) large and being zero for \(x\) small enough, i.e., the case of \cref{item:1_zero}.

Concerning \cref{item:2_zero}, we use the fact that, for a given \(\eta\in\R_{>0},\) the nonlocal operator will always be bounded away from zero, uniformly in the spatial variable, so that we can built on the sketched proof in \cref{theo:existence_uniqueness_maximum_principle}.
\end{proof}
\begin{rem}[Uniqueness in the zero density case]
The fact that we cannot prove uniqueness in \cref{theo:existence_zero} \cref{item:1} does not mean that uniqueness does not hold. It means, rather, that the classical fixed-point approach does not work in this case as the velocity field generated by the nonlocal operator is not globally Lipschitz in the spatial variable. However, it might be possible that other methods, like semigroup approaches or weaker topologies to consider the fixed-point problem, do work. Currently, it is not clear how we could expect even existence if the initial data can be zero without satisfying either \cref{item:1} or \cref{item:2}.
\end{rem}
\begin{rem}[Singular limit convergence in the case of the assumptions in \cref{theo:existence_zero}]
Note that the \(TV\) bound obtained in \cref{theo:TV_bounds_Wp} does not require the solution to be bounded away from zero. 
Thus, we have compactness in the spatial variable of \(W_{\eta}^{p}\) in the case of an exponential kernel, which also gives strong convergence of \(W_{\eta}\) in the spatial variable.

However, obtaining time compactness as in \cref{cor:compactnessWp} seems to be difficult, as we cannot rely on the boundedness of
\[
\eta \partial_{2}W_{\eta} \ \text{ but only on } \ \eta \partial_{2}W_{\eta}^{p}
\]
preventing us from estimating the time derivative of \(W_{\eta},\ W_{\eta}^{p}\) in \cref{eq:nonlocal_PDE} or \cref{rem:PDE_nonlocal_p} respectively. This again prevents us from obtaining a general singular limit convergence for non-negative initial data \(q_{0}\) so that we have the restriction of 
\(q_{0}\geqq q_{\min}\in\R_{>0}\).
\end{rem}

\section{Monotonicity preserving dynamics}\label{sec:monotonicity}
In this section, we show that the nonlocal dynamics conserve monotonicity under additional assumptions on the second derivative of \(V\). This is well-known for nonlocal conservation laws (see \cite[Theorem 4.13\ \& Theorem 4.18]{pflug4} and \cite[Theorem 5.1]{friedrich2022conservation} for the nonlocality in the velocity), and holds in the \(p\)-norm case as well:
\begin{theo}[Monotonicity of the nonlocal solution] \label{thm:monotonicity_pinfty}
Let \cref{ass:input_datum_and_velocity}  hold. Assume that \(p\in\R_{>0}\) and $q_0$ and $V''$ satisfy additionally
\begin{equation}
\begin{aligned}
    &\Big(p\in(0,1]\ \wedge\ q_0(x)\leq q_0(y) \text{ for a.e. } x,y\in \R,\, x\leq y \wedge V''(s)\leq 0 \text{ for }s\in (0,\|q_0\|_{L^\infty (\R)}) \text{ a.e.} \Big) \\
    \vee\ 
    &\Big(p\in[1,\infty)\ \wedge\ q_0(x)\geq q_0(y) \text{ for a.e. } x,y\in\R, \, x\leq y \wedge V''(s)\geq 0 \text{ for }s\in (0,\|q_0\|_{L^\infty (\R)}) \text{ a.e.}\Big),
\end{aligned}
\label{eq:monotonicity_preserving}
\end{equation}
i.e., \(V''\leqq 0\) and \(p\in(0,1]\) for monotonically increasing initial data, and \(V''\geqq 0\) and \(p\in[1,\infty)\) for monotonically decreasing initial data.

Then, for every nonlocal kernel as in \cref{ass:input_datum_and_velocity}, the solution \(q\) is ``monotonicity preserving'' on the entire time horizon considered, i.e., $\forall t\in[0,T]$ the solution $q(t,\cdot)$ is monotone and, consequently, the total variation is uniformly bounded, i.e., 
\begin{equation*}
   \forall (\eta, t)\in \R_{>0}\times [0,T]: \, |q_\eta(t,\cdot)|_{TV(\R)} \leq \|q_0\|_{L^\infty(\R)}.
\end{equation*}
For the exponential kernel \(\gamma\equiv \exp(-\cdot)\), monotonicity is preserved for \textbf{all} \(p\in(0,\infty)\) under the condition in \cref{eq:monotonicity_preserving}.
\end{theo}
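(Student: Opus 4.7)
The plan is to reduce, via the stability in \cref{theo:existence_uniqueness_maximum_principle} and the smoothing in \cref{lem:approx}, to smooth initial data and a classical solution, and then analyse the sign of $r\coloneqq\partial_x q$ pointwise via the PDE. Differentiating the non-conservative form $\partial_t q + V(W_\eta)\partial_x q = -V'(W_\eta)\partial_x W_\eta \cdot q$ once in $x$ gives, along a characteristic $\dot\xi = V(W_\eta(t,\xi))$,
\begin{equation*}
\tfrac{d r}{d t}\Big|_{\text{char}} + 2 V'(W_\eta)\,\partial_x W_\eta \cdot r = -q\bigl[V''(W_\eta)(\partial_x W_\eta)^2 + V'(W_\eta)\partial_x^2 W_\eta\bigr],
\end{equation*}
which, wherever $r=0$, collapses into a sign condition on the bracket.

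Focusing on the decreasing-data scenario ($V''\geq 0$, $p\geq 1$; the increasing case is symmetric), I argue by contradiction: if $r$ ever becomes positive, there is a first time $t_\star>0$ and point $x_\star\in\R$ at which $r(t_\star,x_\star)=0$ while $r(t,\cdot)\leq 0$ on $[0,t_\star]$. Along the characteristic through $(t_\star, x_\star)$ necessarily $\tfrac{d r}{d t}|_{\text{char}}\geq 0$, forcing the bracket to be $\leq 0$. Since the monotonicity of $q$ on $[0,t_\star]$ transfers directly to $W_\eta$ via the substitution $u=y-x$ in \cref{eq:nonlocal_operator_2}, we have $\partial_x W_\eta(t_\star,x_\star)\leq 0$ and thus $V''(W_\eta)(\partial_x W_\eta)^2\geq 0$. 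The task then reduces to proving $\partial_x^2 W_\eta(t_\star,x_\star)\leq 0$, which makes $V'(W_\eta)\partial_x^2 W_\eta\geq 0$ and contradicts the standing hypothesis.

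For a general decreasing kernel, the sign of $\partial_x^2 W_\eta^p(t_\star,x_\star)$ is obtained by differentiating the defining integral twice and integrating by parts once; the hypothesis $\gamma'\leq 0$, the monotonicity of $q(t_\star,\cdot)$ and the constraint $\partial_x q(t_\star,x_\star)=0$ together yield $\partial_x^2 W_\eta^p\leq 0$, and the conversion $\partial_x^2 W_\eta = (1-p)W_\eta^{-1}(\partial_x W_\eta)^2 + \tfrac{W_\eta^{1-p}}{p}\partial_x^2 W_\eta^p$ then delivers $\partial_x^2 W_\eta\leq 0$ precisely when $p\geq 1$ (which forces the first summand to be non-positive as well), explaining the postulated range. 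For the exponential kernel the argument is purely algebraic: the identity $q^p = W_\eta^p - \eta W_\eta^{p-1}\partial_x W_\eta$ from \cref{lem:PDE_nonlocal}, differentiated once more in $x$ and evaluated at $\partial_x q(t_\star,x_\star)=0$, produces
\begin{equation*}
\eta\,\partial_x^2 W_\eta(t_\star,x_\star) = \partial_x W_\eta(t_\star,x_\star)\bigl[1 + (p-1)(q/W_\eta)^p\bigr]\big|_{(t_\star,x_\star)},
\end{equation*}
whose bracket is strictly positive throughout the admissible regime (decreasing gives $q/W_\eta\geq 1$ and the bracket is $\geq p\geq 1$ for $p\geq 1$; increasing gives $q/W_\eta \leq 1$ and the bracket stays positive for \emph{any} $p>0$, which is the structural reason the exponential kernel extends the admissible $p$-range). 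The $TV$ bound then follows trivially, as any bounded monotone function on $\R$ has total variation equal to its oscillation, which is at most $\|q_0\|_{L^\infty(\R)}$ by the maximum principle of \cref{theo:existence_uniqueness_maximum_principle}.

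The main obstacle, I expect, is the degenerate situation $\partial_x W_\eta(t_\star,x_\star)=0$, where all the first-order sign information vanishes on both sides; this can be handled by first establishing the result for a strict-monotonicity perturbation of the initial datum (e.g.\ replacing $q_0$ by $q_0 + \varepsilon\tanh(-x)$ in the decreasing case), for which $\partial_x W_\eta$ stays strictly signed, and then passing to the limit $\varepsilon\to 0$ via the $L^1$-stability in \cref{theo:existence_uniqueness_maximum_principle}.
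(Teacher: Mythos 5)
Your proof follows essentially the same route as the paper's: reduce to smooth classical solutions, differentiate the PDE once in space, argue at the first time/position where $\partial_x q$ could change sign, and show that $\partial_x^2 W_\eta$ has the correct sign at that point. Your exponential-kernel identity $\eta\,\partial_x^2 W_\eta=\partial_x W_\eta\bigl[1+(p-1)(q/W_\eta)^p\bigr]$ at a point with $\partial_x q=0$ is a tidier algebraic expression of the computation in \cref{eq:partial_2^2W}, and the decomposition of $\partial_x^2 W_\eta$ through $\partial_x^2 W_\eta^p$ is an equivalent repackaging of the paper's direct differentiation of \cref{eq:partial_2_W_reformulations}. Your worry about the degenerate point $\partial_x W_\eta(t_\star,x_\star)=0$ and the proposed strict-monotonicity perturbation are a reasonable way to make the informal ``first-touch'' argument rigorous, which the paper does not spell out.

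One shared looseness deserves attention. Your bracket analysis (and the paper's explicit computation) only covers the exponential kernel for increasing data with arbitrary $p>0$, and for decreasing data with $p\geq1$. The theorem's closing sentence, however, reads as asserting preservation for \emph{all} $p\in(0,\infty)$ in \emph{both} branches of \cref{eq:monotonicity_preserving}. For decreasing data and $p<1$ one has $q/W_\eta\geq1$, and $1+(p-1)(q/W_\eta)^p$ turns negative as soon as $q/W_\eta>(1-p)^{-1/p}$---a bound which, without the kind of smallness condition on $q_{\min}/q_{\max}$ appearing in \cref{theo:TV_bounds_Wp}, need not hold. The paper treats the increasing exponential case explicitly and only asserts that the decreasing case follows ``with a similar argument,'' so your restriction to $p\geq1$ in the decreasing branch is arguably the honest reading of the available estimate; you should be aware, though, that your proof as written does not cover the decreasing, $p<1$ exponential case that the theorem's final sentence appears to promise.
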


\begin{proof}
We approximate the initial datum according to \cref{theo:existence_uniqueness_maximum_principle} by an arbitrary smooth initial datum in \(L^1(\R)\) such that the corresponding solution is sufficiently smooth. Now, we look at the classical solution that satisfies \cref{defi:model_class}. Let us suppress, in \(W_{\eta}[\gamma,q]\), the dependency with regard to \(\gamma,q\) and leave out the time and space dependencies.  Differentiating \cref{eq:pnorm_problem} in space yields
\begin{equation}
\begin{aligned}
\partial_{t}\partial_{x}q&=\partial_{x}\big(-V'(W_{\eta})\partial_{x}W_{\eta}q -V(W_{\eta})q_{x}\big)\\
&=-V''(W_{\eta})(\partial_{x}W_{\eta})^{2}q -V'(W_{p})\partial_{x}^{2}W_{\eta}q -2V'(W_{\eta})\partial_{x}W_{\eta}q_{x}-V(W_{\eta})q_{xx}.
\end{aligned}
\label{eq:PDE_differentiated}
\end{equation}
Next, we analyze  \(\partial_{x}^{2}W_{\eta}\) and compute it explicitly taking advantage of \cref{eq:partial_2_W_reformulations} to write
\begin{equation}
\begin{aligned}
\partial_{x}^{2}W_{\eta}&=\partial_{x}\Bigg(W_{\eta}^{1-p}\bigg(-\tfrac{1}{\eta^{2}}\int_{x}^{\infty}\gamma(\tfrac{y-x}{\eta})^{p-1}\gamma'(\tfrac{y-x}{\eta})q(t,y)^{p}\dd y-\tfrac{1}{p\eta}\gamma(0)^{p}q(t,x)^{p} \bigg) \Bigg)\\
&=\partial_{x}\bigg(W_{\eta}^{1-p}\tfrac{p}{\eta}\int_{x}^{\infty}\gamma(\tfrac{y-x}{\eta})^{p}q(t,y)^{p-1}\partial_{y}q(t,y)\dd y\bigg)\\
&=(1-p)W_{\eta}^{-p}\partial_{x}W_{\eta}\tfrac{p}{\eta}\int_{x}^{\infty}\gamma(\tfrac{y-x}{\eta})^{p}q(t,y)^{p-1}\partial_{y}q(t,y)\dd y-W_{\eta}^{1-p}\tfrac{p}{\eta}\gamma(0)^{p}q(t,x)^{p-1}\partial_{x}q(t,x)\\
&\quad -W_{\eta}^{1-p}\tfrac{p^{2}}{\eta^{2}}\int_{x}^{\infty}\gamma(\tfrac{y-x}{\eta})^{p-1}\gamma'(\tfrac{y-x}{\eta})q(t,y)^{p-1}\partial_{y}q(t,y)\dd y.
\end{aligned}
\label{eq:partial_2_squared_W}
\end{equation}
Now, let us look into the case in which the initial datum is monotonically increasing, i.e., \(q_{0}'(x)\geqq 0\).
Let us suppose that there is a time \(t\in[0,T]\) and \(\tilde{x}\in\R\) such that \(\partial_{2}q(t,\tilde{x})=0\) (this is the spatial point and time where the monotonicty could be destroyed first). Then, we know that \(\partial_{2}^{2}q(t,\tilde{x})= 0\) and \(\partial_{2}q(t,x)\geq 0\ \forall x\in\R\). Evaluating \cref{eq:PDE_differentiated} at \((t,\tilde x)\) we arrive at
\begin{align*}
    \partial_{t}\partial_{2}q(t,\tilde{x})&=-V''(W_{\eta}(t,\tilde{x})(\partial_{2}W_{\eta}(t,\tilde{x}))^{2}q(t,\tilde{x}) -V'(W_{\eta}(t,\tilde{x}))\partial_{2}^{2}W_{\eta}(t,\tilde{x}) q(t,\tilde{x})\\
    &\qquad -2V'(W_{\eta}(t,\tilde{x}))\partial_{2}W_{\eta}(t,\tilde{x})\partial_{2}q(t,\tilde{x})-V(W_{\eta}(t,\tilde{x}))\partial_{2}^{2}q(t,\tilde{x}),
    \intertext{using the conditions on \(V''\) in the monotonically increasing case, i.e., \(V''\leqq 0\) and that \(\partial_{2}^{2}q(t,\tilde{x})=0=\partial_{2}q(t,\tilde{x})\) as well as \(q\geqq0\)}
    &\geq -V'(W_{\eta}(t,\tilde{x}))\partial_{x}^{2}W_{\eta}(t,\tilde{x}) q(t,\tilde{x}),
\end{align*}
and, as \(q\geqq 0\) once more, it remains to find the sign of \(\partial_{2}^{2}W_{\eta}(t,\tilde{x})\). To this end, we invoke \cref{eq:partial_2_squared_W} and have
\begin{align*}
    \partial_{2}^{2}W_{\eta}(t,\tilde{x})&=-(p-1)W_{\eta}(t,\tilde{x})^{-p}\partial_{2}W_{\eta}(t,\tilde{x})\tfrac{1}{\eta}\int_{\tilde{x}}^{\infty}\gamma(\tfrac{y-\tilde{x}}{\eta})^{p}q(t,y)^{p-1}\partial_{y}q(t,y)\dd y  \\
&\qquad -W_{\eta}^{1-p}\tfrac{p^{2}}{\eta^{2}}\int_{\tilde{x}}^{\infty}\gamma(\tfrac{y-\tilde{x}}{\eta})^{p-1}\gamma'(\tfrac{y-\tilde{x}}{\eta})q(t,y)^{p-1}\partial_{y}q(t,y)\dd y -W_{\eta}^{1-p}\tfrac{p}{\eta}\gamma(0)^{p}q(t,\tilde{x})^{p-1} \partial_{2}q(t,\tilde{x}).
\end{align*}
For \(p\in(0,1]\), the first summand is nonnegative as \(\partial_{2}W_{\eta}\) is nonnegative thanks to \(\partial_{2}q(t,\cdot)\geqq0\) as well as the integral operator integrating a nonnegative function.
The second summand is positive by almost the same reason, recalling that \(\gamma'\leqq 0\), and the third summand is zero as \(\partial_{2}q(t,\tilde{x})=0\).
Thus, monotonically increasing solutions remain monotonically increasing for all times, given that \(p\in(0,1]\) and \(V''\leqq 0\). The argument is similar for a monotonically decreasing datum, \(p\in[1,\infty)\) and \(V''\geqq 0\).
The stated total variation estimate follows directly from the monotonicity.

Next, let us consider the exponential kernel. Again, assume that we are in the monotonically increasing case. We require \cref{eq:partial_2_squared_W} to be nonnegative at \(\tilde{x}\). We are left with the term \(\partial_{2}^{2}W_{\eta}\) and we need to show that it is nonnegative. To do this, let us compute and estimate as follows for \(p\in(1,\infty)\) and \((t,x)\in\OT\):
\begin{equation}
\begin{aligned}
    \partial_{2}^{2}W_{\eta}(t,x)&=-(p-1)W^{-p}_{\eta}\partial_{x}W_{\eta}\tfrac{1}{\eta}\int_{x}^{\infty}\exp(\tfrac{x-y}{\eta})^{p}q(t,y)^{p-1}\partial_{y}q(t,y)\dd y  \\
&\quad +W^{1-p}_{\eta}\tfrac{p}{\eta^{2}}\int_{x}^{\infty}\exp(\tfrac{x-y}{\eta})^{p}q(t,y)^{p-1}\partial_{y}q(t,y)\dd y -W^{1-p}_{\eta}\tfrac{p}{\eta}\exp(0)^{p}q(t,x)^{p-1} q_{x}(t,x) \\
&=W^{-p}_{\eta}\left(W_{\eta}\tfrac{p}{\eta}-(p-1)\partial_{x}W_{\eta}\right) \tfrac{1}{\eta}\!\int_{x}^{\infty}\!\!\!\!\!\exp(\tfrac{x-y}{\eta})^{p}q(t,y)^{p-1}\partial_{y}q(t,y)\dd y -\tfrac{pq^{p-1}}{W^{p-1}_{\eta}\eta} q_{x}(t,x) 
\end{aligned}
\label{eq:partial_2^2W}
\end{equation}
using the nonlocal identity~\eqref{eq:nonlocal_identity}, we have that \(W^{-p}_{\eta}\left(W_{\eta}\tfrac{p}{\eta}-(p-1)\partial_{x}W_{\eta}\right)\geqq0\).

In particular, at \(x=\tilde{x}\), we can conclude that \(\partial_{2}^{2}W(t,\tilde{x})\geq 0\), together with \cref{eq:PDE_differentiated}, yields the increasing monotonicity. One can prove the monotonically decreasing case with a similar argument.
We performed all these computations thanks to the smoothness of the considered solutions, but the monotonicity is preserved also in the limit for non-smooth initial data according to \cref{theo:existence_uniqueness_maximum_principle}.
\end{proof}
\begin{rem}[Monotonicity uniformly in \(p\) and generalizations]
From the previous monotonicity results, it becomes apparent that monotonicity is preserved uniformly in \(p\) when taking the exponential kernel in \cref{ass:exponential}. Then, one can pass to the limit for \(p\rightarrow\infty\) and obtain solutions for the corresponding limit. A similar argument can be made for \(p\rightarrow 0\) without further restrictions on the kernel. This is further detailed in the following \cref{sec:p=0} and numerically approximated in \cref{sec:numerics}.

\end{rem}
\section{Limits of the solution for \texorpdfstring{\(p\rightarrow 0\)}{p to 0} and some comments on \texorpdfstring{\(p\rightarrow \infty\)}{p to ∞}}\label{sec:p=0}
We look into the limit of the nonlocal conservation law's solution as \(p\rightarrow 0\).
Firstly, we define what we expect to be the limiting weak solution. Interestingly, the equation becomes a different type of nonlocal conservation law involving the integral of a logarithm. This is in line with what is well known from the \(p\)-quasi-norm behavior for \(p\rightarrow 0\) (we recall that the triangle inequality is not verified) and is detailed in the following.
\begin{lem}[{Convergence of \(\|\cdot\|_{L^{p}(\Omega)}\) for \(p\rightarrow 0\) on bounded \(\Omega\)}]\label{lem:L_p_p=0}
    Let \(\Omega\subset\R\) be open and bounded and, for a \(q\in \R_{>0}\), \(f\in L^{p}(\Omega)\ \forall p\in(0,q]\) be given. Then, it holds that
    \begin{equation}
    \lim_{p\rightarrow 0}\|f\|_{L^{p}(\Omega)}=\exp\bigg(\tfrac{1}{|\Omega|}\int_{\Omega}\ln(|f(x)|)\dd x\bigg)\eqqcolon \|f\|_{L^{0}(\Omega)}.\label{eq:L_p_p=0}
    \end{equation}
\end{lem}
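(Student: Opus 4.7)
The strategy is the classical reduction, via logarithms, of the \(p\to 0\) limit of an \(L^{p}\)-norm to a first-order expansion of the integral around \(p=0\). First, I would set \(g(p)\coloneqq \tfrac{1}{|\Omega|}\int_{\Omega}|f(x)|^{p}\dd x\), observing that \(g(0)=1\) and that \(g\) is well defined on \([0,q]\) by the hypothesis \(f\in L^{p}(\Omega)\) for all such \(p\). Taking logarithms (with the normalized-measure convention that makes the right-hand side of \cref{eq:L_p_p=0} dimensionally correct), the claim reduces to
\[
\lim_{p\to 0}\tfrac{\ln g(p)}{p}=\tfrac{1}{|\Omega|}\int_{\Omega}\ln|f(x)|\dd x,
\]
and since \(g(p)\to 1\), a single l'H\^opital step (or a first-order Taylor of \(\ln\) at \(1\)) further reduces this to computing \(g'(0)\), namely
\[
\lim_{p\to 0}\tfrac{g(p)-1}{p}=\tfrac{1}{|\Omega|}\int_{\Omega}\ln|f(x)|\dd x.
\]
This is exactly a differentiation-under-the-integral-sign statement, since \((|f(x)|^{p}-1)/p\to\ln|f(x)|\) pointwise a.e.\ as \(p\to 0\).

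Next, I would justify the exchange of limit and integral by Lebesgue dominated convergence, exploiting the convexity of \(p\mapsto t^{p}\) for fixed \(t>0\). The elementary inequalities \(\tfrac{t^{p}-1}{p}\le\tfrac{t^{q}-1}{q}\) for \(t\ge 1,\ 0<p\le q\), together with \(\tfrac{t^{p}-1}{p}\ge\ln t\) for \(0<t\le 1\), upon splitting \(\Omega\) into \(\{|f|\ge 1\}\cup\{|f|<1\}\), furnish the joint dominant \(\max\{(|f|^{q}-1)/q,\,|\ln|f||\}\), which lies in \(L^{1}(\Omega)\) as soon as \(|f|^{q}\) and \(\ln|f|\) are integrable.

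The main obstacle is precisely the integrability of \(\ln|f|\), in particular of its negative part \(\ln^{-}|f|\); without it the right-hand side of \cref{eq:L_p_p=0} is \(-\infty\) in the argument of \(\exp\), and the limit statement has to be interpreted as \(\|f\|_{L^{p}(\Omega)}\to 0\). In the later applications of the lemma in this paper, \(f\) is a product of the nonlocal kernel with a density that is bounded above and below away from zero by \(q_{\min}>0\) (by \cref{ass:input_datum_and_velocity} combined with the maximum principle of \cref{theo:existence_uniqueness_maximum_principle}), so both tails of \(\ln|f|\) are controlled and the dominated-convergence step is immediate. Exponentiating the resulting limit identity then yields \cref{eq:L_p_p=0}.
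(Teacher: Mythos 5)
Your proposal is a genuine proof, whereas the paper does not prove the lemma at all---it simply cites Folland (Exercise 8c, p.~187) and Hardy--Littlewood--P\'olya. The route you take (logarithm, l'H\^opital reduction to \(g'(0)\), then differentiation under the integral sign via dominated convergence using the monotonicity of \(p\mapsto (t^{p}-1)/p\)) is essentially the classical proof underlying those references, and it is correct once \(\ln|f|\in L^{1}(\Omega)\). You are also right that this integrability is the crux: the stated hypotheses \(f\in L^{p}(\Omega)\) for \(p\in(0,q]\) control \(\ln^{+}|f|\) (since \(\ln t\leq (t^{q}-1)/q\) for \(t\geq 1\)) but \emph{not} \(\ln^{-}|f|\). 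One small remark: you should also note that the same dominant gives \(g(p)\to 1\), which your l'H\^opital step silently uses.

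For the general case in which \(\int_{\Omega}\ln^{-}|f|=-\infty\), your dominated-convergence argument does not apply, and your reading (interpret the right-hand side as \(\exp(-\infty)=0\)) is the standard one. You can in fact close this gap without extra hypotheses by replacing dominated convergence with monotone convergence: since \(p\mapsto \tfrac{|f(x)|^{p}-1}{p}\) is nondecreasing (slope of a chord of the convex map \(p\mapsto |f(x)|^{p}\)) and decreases to \(\ln|f(x)|\) as \(p\downarrow 0\), and since the \(p=q\) member is integrable, the monotone convergence theorem yields \(\tfrac{1}{|\Omega|}\int_{\Omega}\tfrac{|f|^{p}-1}{p}\downarrow \tfrac{1}{|\Omega|}\int_{\Omega}\ln|f|\in[-\infty,\infty)\). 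Combining this with the elementary inequalities \(\ln g(p)\leq g(p)-1\) (upper bound) and Jensen's inequality \(g(p)\geq \exp\big(\tfrac{p}{|\Omega|}\int_{\Omega}\ln|f|\big)\) (lower bound) gives the full statement including the degenerate case, with no integrability assumption on \(\ln|f|\). As you observe, this refinement is moot in the paper's actual application, where \(f\) is sandwiched between strictly positive bounds so that \(\ln|f|\) is bounded; still, it would make your proof match the scope of the cited sources.
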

\begin{proof}
    The result can be found in \cite[p. 187, Exercise 8c]{folland}. It can also be found, with a broader perspective on \(L^{p}\) metric spaces for \(p\in(0,1)\) in \cite[p. 139, 6.8, 1.87, Equation 6.8.1]{hardy_littlewood_polya}.
\end{proof}

The right hand side of \cref{eq:L_p_p=0} is sometimes also called a geometric mean \cite[6.7.``The geometric mean of a function'']{hardy_littlewood_polya} which becomes clear when taking the counting measure instead of the Lebesgue measure.
It should furthermore be mentioned that writing \(\|f\|_{L^{p}}\) for \(p\in(0,1)\) is an abuse of notation insofar as \(\|\cdot\|\) typically represents a norm. Nevertheless, for convenience we will stick with this notation.

As can be seen in \cref{lem:L_p_p=0}, it is crucial to have a bounded \(\Omega\). This means that we cannot work with nonlocal kernels with infinite support and need to restrict our analysis to finitely supported kernels.
\begin{defi}[Weak solution for \(p=0\) ]\label{defi:weak_solution_p_equal_zero}
Let \(\gamma\) as in \cref{ass:input_datum_and_velocity} in addition be compactly supported (without loss of generality on \((0,1)\)) and \(\gamma\geq \gamma_{\min} \text{ on } (0,1).\)
Then, we call, for \(\eta\in\R_{>0}\), the following conservation law 
\begin{equation}
\begin{aligned}
\partial_{t}q+\partial_{x}\big(V(W_{\eta,0}[q,\gamma_{0}](t,x))q\big)&=0,&& (t,x)\in\OT\\
    q(0,x)&=q_{0}(x), && x\in\R
\end{aligned}
\label{eq:strong_form_solution_p=0}
\end{equation}
a nonlocal conservation law for \(p=0\) where the nonlocal operator is defined by
  \begin{equation}
    W_{\eta,0}[q,\gamma_{0}](t,x)\coloneqq \exp\bigg(\tfrac{1}{\eta}\int_{x}^{x+\eta}\ln\big(\big|\gamma_{0}(\tfrac{x-y}{\eta})q(t,y)\big|\big)\dd y\bigg),\qquad (t,x)\in\OT\label{eq:W_eta_0}
  \end{equation}
  and (with the definition in \cref{eq:L_p_p=0})
\[
\gamma_{0}\coloneqq \tfrac{\gamma}{\|\gamma\|_{L^{0}((0,1))}}.
\]
  Furthermore, we call \(q:\OT\rightarrow\R\) a weak solution to \cref{eq:strong_form_solution_p=0} if \[q\in C\big([0,T];L^{1}_{\text{loc}}(\R)\big)\] for \(q_{0}\in L^{1}_{\text{loc}}(\R;\R_{>q_{\min}})\) and it holds for all \(\phi\in C^{1}_{\text{c}}((-42,T)\times\R)\)
  \begin{gather*}
      \iint_{\OT}\Big(\phi_{t}(t,x)+V(W_{\eta,0}[q,\gamma_{0}](t,x))\phi_{x}(t,x)\Big)q(t,x)\dd x\dd t+\int_{\R} q_{0}(x)\phi(0,x)\dd x=0.
  \end{gather*}
\end{defi}
The nonlocal conservation law in \cref{eq:strong_form_solution_p=0} does not fall in the model class analyzed in the previous sections as the nonlinearity in the nonlocal term there is only of the \(p\)th power, with the \(\frac{1}{p}\) around the integral which could be compensated ``into'' the velocity field. In contrast, the present setup has a logarithmic density function.

In \cite{Friedrich2024}, the authors study a more general nonlinear nonlocal conservation law that can almost be applied to \cref{eq:strong_form_solution_p=0}.
\begin{lem}[Existence and Uniqueness of weak solutions as in \cref{defi:weak_solution_p_equal_zero}]\label{lem:existence_uniqueness_p=0}
    Let \(q_{0}\in L^{\infty}\big(\R;\R_{\geq q_{\min}}\big)\cap TV(\R)\) for a given \(q_{\min}\in\R_{>0}\) be given. Then, there exists a unique weak solution \(q\in C\big([0,T];L^{1}_{\text{loc}}(\R)\big)\cap L^{\infty}((0,T);L^{\infty}(\R))\) to \cref{eq:strong_form_solution_p=0} on any finite time horizon \(T\in\R_{>0}\) in the sense of \cref{defi:weak_solution_p_equal_zero}, satisfying the max/min principle
    \[
    q_{\min}\leqq q \leqq \|q_{0}\|_{L^{\infty}(\R)} \text{ on } \OT \text{ a.e.}
    \]
\end{lem}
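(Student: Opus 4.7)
The plan is to adapt the fixed-point strategy of \cref{theo:existence_uniqueness_maximum_principle}, cast in the more general nonlinear nonlocal framework of \cite{Friedrich2024}, to the logarithmic nonlocal operator \(W_{\eta,0}\). The argument proceeds in three stages: (i) reformulate the Cauchy problem as a fixed-point equation in the nonlocal term \(w\coloneqq W_{\eta,0}[q,\gamma_{0}]\) via the method of characteristics, (ii) establish a self-mapping and contraction property on a small time horizon \([0,T^{*}]\) and apply Banach's fixed-point theorem, and (iii) derive a maximum/minimum principle that allows time-clustering to cover any finite horizon \(T\), with uniqueness following from the uniqueness of the fixed point.

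For stage (i), given a candidate \(w\in L^{\infty}((0,T^{*});W^{1,\infty}_{\loc}(\R))\) with bounded spatial derivative, I would construct the characteristics \(\xi_{w}\) via \cref{eq:characteristics} and the candidate density \(q(t,x)=q_{0}(\xi_{w}(t,x;0))\,\partial_{2}\xi_{w}(t,x;0)\); plugging this into \cref{eq:W_eta_0} and substituting \(z=\xi_{w}(t,y;0)\) produces an explicit fixed-point equation \(w=\Phi(w)\). The central structural observation for stage (ii) is that, thanks to the hypotheses \(q_{0}\geq q_{\min}\) and \(\gamma_{0}\geq\gamma_{\min}/\|\gamma\|_{L^{0}((0,1))}>0\) on \((0,1)\), the argument of the logarithm in \cref{eq:W_eta_0} stays confined to a compact subset of \(\R_{>0}\) uniformly throughout the iteration, a regime on which \(\ln\) is Lipschitz. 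This yields Lipschitz estimates on \(\Phi\) in \(L^{\infty}((0,T^{*})\times\R)\) that can be made contractive for sufficiently small \(T^{*}\). I expect the main technical obstacle to be the \(W^{1,\infty}\)-control of \(\Phi(w)\) required for the self-mapping property, essentially transferring the nonlinear operator analysis of \cite{Friedrich2024} to the specific \(\log\)-nonlinearity considered here.

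For stage (iii), after mollifying the initial datum as in \cref{theo:existence_uniqueness_maximum_principle} so that the associated solution is classical, I would evaluate at a point \(\tilde{x}\in\R\) where \(q(t,\cdot)\) is maximal (so \(\partial_{2}q(t,\tilde{x})=0\)) the identity
\[
\partial_{t}q(t,\tilde{x})=-V'\big(W_{\eta,0}(t,\tilde{x})\big)\,q(t,\tilde{x})\,\partial_{2}W_{\eta,0}(t,\tilde{x}).
\]
A direct differentiation of \cref{eq:W_eta_0}, after changing variables to \(z=(y-x)/\eta\) in the defining integral so that the \(\gamma_{0}\)-dependence becomes \(x\)-independent, produces the compact formula
\[
\partial_{2}W_{\eta,0}(t,x)=\tfrac{W_{\eta,0}(t,x)}{\eta}\ln\tfrac{q(t,x+\eta)}{q(t,x)},
\]
which at the maximal point \(\tilde{x}\) is non-positive since \(q(t,\tilde{x}+\eta)\leq q(t,\tilde{x})\). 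Together with \(V'\leqq 0\) and \(q\geqq 0\), this forces \(\partial_{t}q(t,\tilde{x})\leq 0\); the symmetric computation at a minimum point \(\hat{x}\) gives \(\partial_{t}q(t,\hat{x})\geq 0\), delivering both bounds of the maximum/minimum principle. Because \(T^{*}\) can be chosen depending only on \(\|q_{0}\|_{L^{\infty}(\R)}\) and \(q_{\min}\), and these bounds are preserved, the local-in-time fixed-point argument can be iterated on intervals \([kT^{*},(k+1)T^{*}]\) to reach any finite \(T\); uniqueness of the weak solution then follows, as in \cref{theo:existence_uniqueness_maximum_principle}, from the uniqueness of the nonlocal fixed-point \(w\) together with the fact that every weak solution admits a representation along the characteristics generated by \(w\).
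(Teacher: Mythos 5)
Your proposal is correct and rests on the same methodological foundation that the paper invokes: the paper's own proof is merely a citation to \cite[Theorem~2]{Friedrich2024} together with a remark that the two deviations (the kernel appearing inside the nonlinearity, and $TV$ in place of $BV$ data) are easily absorbed, and the cited reference in turn uses the characteristics/fixed-point machinery that you sketch. So at the level of substance the approaches coincide; you have simply written out what the citation implicitly relies on.

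One item in your sketch is a genuine value-add that the paper does not record: the identity
\[
\partial_{2}W_{\eta,0}(t,x)=\tfrac{W_{\eta,0}(t,x)}{\eta}\ln\tfrac{q(t,x+\eta)}{q(t,x)},
\]
which follows because after the substitution $z=(y-x)/\eta$ the kernel contribution $\int_{0}^{1}\ln\gamma_{0}(z)\dd z$ is an $x$-independent constant, so only the boundary terms of the $\ln q$-integral survive the differentiation. This makes the sign of $\partial_{2}W_{\eta,0}$ at extrema of $q(t,\cdot)$ completely transparent and gives a particularly clean maximum/minimum principle, much cleaner than the general-$p$ computation in \cref{eq:partial_2_W_reformulations}. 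A small caveat: \cref{eq:W_eta_0} in the paper writes $\gamma_{0}\big(\tfrac{x-y}{\eta}\big)$, but for $y\in(x,x+\eta)$ this argument lies in $(-1,0)$, outside $\supp\gamma_{0}=(0,1)$; comparison with \cref{eq:nonlocal_operator_2} shows the intended argument is $\tfrac{y-x}{\eta}$, and your computation implicitly uses the corrected form. With that understood, your proof proposal is sound and consistent with the paper.
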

\begin{proof}
 This result can be found in \cite[Theorem 2]{Friedrich2024} under slightly more restrictive assumptions, i.e., \(q_{0}\in BV(\R)\) and the nonlocal kernel being outside of the nonlinearity. In our setup, we cannot postulate \(q_{0}\in BV(\R)\) as we bound the initial datum away from zero so that we do not have a finite \(L^{1}-\)norm. However, this is also not required in the proof. In  \cite{Friedrich2024}, the nonlinearity in the nonlocal term is only on the solution and not on the kernel. Although this nonlinearity is in the case considered on both the solution and the kernel function, such can also be incorporated into the framework by redefining the kernel. We do not detail it further.
\end{proof}
This brings us to the main theorem of this section, i.e., the convergence result for \(p \rightarrow 0\) of solutions to nonlocal conservation laws in the \(p\)-norm.
\begin{theo}[Convergence of the nonlocal solution for \(p\rightarrow 0\)]\label{theo:convergence_p=0}
Consider the solution of the nonlocal conservation law in \cref{defi:model_class} with  \cref{ass:input_datum_and_velocity}. Let us assume in addition that the kernel \(\gamma\) is compactly supported on \((0,1)\) and  \(\gamma(1)>\gamma_{\min}\in\R_{>0}\). Define the kernel, normalized in \(L^{p}\), as
\[
\gamma_{p}\coloneqq  \tfrac{\gamma}{\|\gamma\|_{L^{p}((0,1))}} \text{ on }\ (0,1).
\]
Denote by \(q_{\eta,p}\in C\big([0,T];L^{1}_{\text{loc}}(\R)\big)\) the nonlocal solution with the nonlocal operator \(W_{p,\eta}[q_{\eta,p},\gamma_{p}]\).

Let us call \(q_{\eta,\ast}\in C\big([0,T];L^{1}_{\text{loc}}(\R)\big)\) the solution of the nonlocal conservation law for ``\(p=0\),'' i.e., the solution of \cref{defi:weak_solution_p_equal_zero} and  \cref{eq:strong_form_solution_p=0}.

Then, it holds for any \(\Omega\subset\R\) open and bounded that
\[
\lim_{p\rightarrow 0} \|q_{\eta,p}-q_{\eta,\ast}\|_{L^{1}((0,T)\times\Omega)}=0,
\]
i.e., the solution for the nonlocal conservation law and \(p\in\R_{>0}\) converges to the solution of the corresponding nonlocal conservation law with \(p=0\) in the \(L^{1}((0,T^{*})\times\Omega\big)\) norm when \(p\) approaches zero.
\end{theo}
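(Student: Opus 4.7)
The plan is a standard compactness plus uniqueness argument: we first extract uniform estimates on the family \(\{q_{\eta,p}\}\) that are independent of \(p\in(0,p_{0}]\) for some small \(p_{0}\), use them to pick a subsequence converging in \(C\big([0,T];L^{1}_{\text{loc}}(\R)\big)\), identify the limit as a weak solution of the ``\(p=0\)'' nonlocal Cauchy problem in \cref{defi:weak_solution_p_equal_zero}, and conclude by the uniqueness provided by \cref{lem:existence_uniqueness_p=0}. Since any subsequential limit is the same \(q_{\eta,\ast}\), the entire family converges.

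For the uniform estimates, first note that \cref{theo:existence_uniqueness_maximum_principle} gives the \(p\)-independent maximum principle \(q_{\min}\leq q_{\eta,p}\leq \|q_{0}\|_{L^{\infty}(\R)}\). Because \(\gamma\) is compactly supported on \((0,1)\) and bounded below by \(\gamma_{\min}>0\), Lemma~\ref{lem:L_p_p=0} together with dominated convergence implies that \(\|\gamma\|_{L^{p}((0,1))}\to\|\gamma\|_{L^{0}((0,1))}>0\), hence \(\gamma_{p}\to\gamma_{0}\) uniformly on \((0,1)\) and both are uniformly bounded from above and from below by positive constants on \((0,1)\). Using the explicit expression for \(\partial_{x}W_{p,\eta}[q_{\eta,p},\gamma_{p}]\) analogous to \cref{eq:partial_2_W_reformulations} together with the maximum principle and \(\eta>0\) fixed, one obtains \(\|w_{p}\|_{L^{\infty}(\OT)}+\|\partial_{x}w_{p}\|_{L^{\infty}(\OT)}\leq C\) for a constant \(C=C(\eta,q_{\min},\|q_{0}\|_{L^{\infty}},\gamma)\) independent of small \(p\). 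The characteristic representation \cref{eq:solution_formula,eq:characteristics} then yields uniform bounds on \(|q_{\eta,p}(t,\cdot)|_{TV(\R)}\) (via \(TV(q_{0})\) and \(\exp(CT)\)-type constants) as well as a uniform \(L^{1}_{\text{loc}}\) modulus of continuity in \(t\). By the Kolmogorov--Riesz--Fréchet theorem (or a classical Helly-type argument as in \cref{cor:compactnessWp}), we extract a subsequence \(p_{k}\to 0\) with \(q_{\eta,p_{k}}\to\tilde{q}\) in \(C\big([0,T];L^{1}_{\text{loc}}(\R)\big)\) for some \(\tilde{q}\in C\big([0,T];L^{1}_{\text{loc}}(\R)\big)\) satisfying \(q_{\min}\leq\tilde{q}\leq \|q_{0}\|_{L^{\infty}(\R)}\).

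The main obstacle is then to show that \(w_{p_{k}}=W_{p_{k},\eta}[q_{\eta,p_{k}},\gamma_{p_{k}}]\) converges to \(W_{\eta,0}[\tilde{q},\gamma_{0}]\) strongly in \(L^{1}_{\text{loc}}(\OT)\). We split
\[
w_{p_{k}}-W_{\eta,0}[\tilde{q},\gamma_{0}]=\Big(W_{p_{k},\eta}[q_{\eta,p_{k}},\gamma_{p_{k}}]-W_{p_{k},\eta}[\tilde{q},\gamma_{p_{k}}]\Big)+\Big(W_{p_{k},\eta}[\tilde{q},\gamma_{p_{k}}]-W_{\eta,0}[\tilde{q},\gamma_{0}]\Big).
\]
Since both integrands \(\gamma_{p_{k}}(\tfrac{y-x}{\eta})q_{\eta,p_{k}}(t,y)\) and \(\gamma_{p_{k}}(\tfrac{y-x}{\eta})\tilde{q}(t,y)\) are bounded in \([c_{1},c_{2}]\) with \(0<c_{1}\leq c_{2}\) uniformly in \(k\), the map \(u\mapsto u^{1/p_{k}}\) composed with \(\int_{x}^{x+\eta}(\cdot)^{p_{k}}\dd y\) is uniformly Lipschitz on such data, so the first summand is bounded by \(C\,\|q_{\eta,p_{k}}-\tilde{q}\|_{L^{1}_{\text{loc}}}\) and vanishes. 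For the second summand, at each fixed \((t,x)\) the pointwise convergence follows from \cref{lem:L_p_p=0} applied to the bounded-away-from-zero function \(y\mapsto\gamma_{p_{k}}(\tfrac{y-x}{\eta})\tilde{q}(t,y)\) (whose logarithm is dominated by an \(L^{\infty}\) function because of the two-sided bounds), and the uniform \(L^{\infty}\)-bound on the second summand lets us pass to the limit in \(L^{1}_{\text{loc}}\) by dominated convergence.

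With the strong convergence of both \(q_{\eta,p_{k}}\) and \(w_{p_{k}}\), the weak formulation of \cref{defi:weak_solution} passes to the limit because \(V\) is locally Lipschitz and all quantities are uniformly bounded. Hence \(\tilde{q}\) is a weak solution of \cref{eq:strong_form_solution_p=0} in the sense of \cref{defi:weak_solution_p_equal_zero}. The uniqueness result in \cref{lem:existence_uniqueness_p=0} forces \(\tilde{q}=q_{\eta,\ast}\), and a standard subsequence-of-subsequences argument upgrades the convergence of the extracted subsequence to convergence of the entire family as \(p\to 0\) in \(L^{1}((0,T)\times\Omega)\) for every \(\Omega\subset\R\) open and bounded.
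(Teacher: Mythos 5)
Your overall strategy --- $p$-uniform estimates, compactness in $C([0,T];L^1_{\text{loc}}(\R))$, identification of the limit via convergence of the nonlocal operator, and uniqueness from \cref{lem:existence_uniqueness_p=0} --- is precisely the paper's, and your two-term split of $w_{p_k}-W_{\eta,0}[\tilde q,\gamma_0]$ is a harmless repackaging of the paper's three-term decomposition. The gap is in the compactness step: your sketch asserts it rather than establishes it, and it is far from routine.

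Concretely, two things are missing. First, the formula for $\partial_x W_{p,\eta}[q_{\eta,p},\gamma_p]$ carries an explicit $\tfrac{1}{p}$ prefactor (compare the form $\tfrac{W_p^{1-p}}{p\eta}\big(\gamma_p(1)^p q_p(t,x+\eta)^p-\gamma_p(0)^p q_p(t,x)^p+\int_x^{x+\eta}\tfrac{\dd}{\dd x}\gamma_p(\tfrac{y-x}{\eta})^p q_p(t,y)^p\dd y\big)$ used in the paper's proof). To get $\|\partial_x w_p\|_{L^\infty}\leq C$ with $C$ independent of small $p$, one must show the bracket is $O(p)$; the paper does this using $\gamma\geq\gamma_{\min}>0$ through $|a^p-b^p|=p\big|\int_a^b s^{p-1}\dd s\big|$ and $|\gamma^p|_{TV((0,1))}\leq p\gamma_{\min}^{p-1}|\gamma|_{TV((0,1))}$. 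You do not address this cancellation at all. Second, and more seriously, a $p$-uniform $W^{1,\infty}$ bound on the velocity field $V(w_p)$ does not yield $p$-uniform $TV$ bounds on $q_{\eta,p}(t,\cdot)$ via the characteristic representation. The identity $\tfrac{\dd}{\dd t}\int_\R|\partial_x q_p|\dd x=-\int_\R\sgn(\partial_x q_p)\,\partial_x^2\big(V(w_p)\big)q_p\dd x-\int_\R\partial_x\big(V(w_p)\big)|\partial_x q_p|\dd x$ shows that $\partial_x^2 V(w_p)=V''(w_p)(\partial_x w_p)^2+V'(w_p)\partial_x^2 w_p$ must be controlled in $L^1$, and the $L^1$-norm of $\partial_x^2 W_p[q_p,\gamma_p]$ is itself comparable to the quantity $|q_p^p(t,\cdot)|_{TV(\R)}$ one is trying to bound. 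Closing this Gronwall loop with constants uniform in $p$ is the bulk of the paper's proof and requires deriving $p$-uniform bounds on $\|\partial_2 W_p\|_{L^\infty}$, $\|\partial_2 W_p\|_{L^1}$, and $|\partial_2 W_p|_{TV}$ in terms of $|q_p^p(t,\cdot)|_{TV}$. This is also exactly where the hypothesis $\gamma\geq\gamma_{\min}$ is used in an essential way, so it cannot be compressed into ``the characteristic representation then yields uniform $TV$ bounds.'' The sketch is correct in outline, but its technical heart is missing.
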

\begin{proof}
We start the proof by showing that the solutions \(q_{\eta,p}\) are compact in \(C\big([0,T];L^{1}_{\text{loc}}(\R)\big)\) with respect to the parameter \(p\in\R_{>0}\). 
We pursue this by means of spatial \(TV\) estimates and additional time compactness.

To this end, let us consider smooth nonlocal solutions, thanks to \cref{theo:existence_uniqueness_maximum_principle}, and, following the same argument as in the proof of \cref{theo:existence_zero}, we obtain (suppressing the dependency of \(q_{\eta,p}\) and \(W_{\eta,p}\) on \(\eta\in\R_{>0}\))
\begin{align}
\tfrac{\dd}{\dd t}\!\! \int_{\R}\!\! |\partial_{x}q_{p}(t,x)^{p}|\dd x
    &=-p^{2}\int_{\R}\sgn(\partial_{x}q_{p}(t,x)^{p})q_{p}^{p-1} \partial_{x}q_{p}V'(W_{p}[q_{p},\gamma_{p}](t,x))\partial_{x}W_{p}[q_{p},\gamma_{p}](t,x)\dd x\label{eq:TV_estimate_p_0_0}\\
    &\quad -p\int_{\R}\sgn(\partial_{x}q_{p}(t,x)^{p})q_{p}^{p}V''(W_{p}[q_{p},\gamma_{p}](t,x))\big(\partial_{x}W_{p}[q_{p},\gamma_{p}](t,x)\big)^{2}\dd x\notag\\
    &\quad -p\int_{\R}\sgn(\partial_{x}q_{p}(t,x)^{p})q_{p}^{p}V'(W_{p}[q_{p},\gamma_{p}](t,x))\partial_{x}^{2}W_{p}[q_{p},\gamma_{p}](t,x)\dd x\notag\\
    &\quad -p(p-1)\int_{\R}\sgn(\partial_{x}q_{p}(t,x)^{p})q_{p}^{p-2}\big(\partial_{x}q_{p}(t,x)\big)^{2}V(W_{p}[q_{p},\gamma_{p}](t,x))\dd x\notag\\
    &\quad -p\int_{\R}\sgn(\partial_{x}q_{p}(t,x)^{p})q_{p}^{p-1}\partial_{x}^{2}q_{p}V(W_{p}[q_{p},\gamma_{p}](t,x))\dd x\notag\\
    &\quad -p\int_{\R}\sgn(\partial_{x}q_{p}(t,x)^{p})q_{p}^{p-1}\partial_{x}q_{p}V'(W_{p}[q_{p},\gamma_{p}](t,x))\partial_{x}W_{p}[q_{p},\gamma_{p}](t,x)\dd x.\notag
    \intertext{Integrating by parts on the second last term and rearranging,}
    &=-p\int_{\R}|\partial_{x}q_{p}(t,x)^{p}|V'(W_{p}[q_{p},\gamma_{p}](t,x))\partial_{x}W_{p}[q_{p},\gamma_{p}](t,x)\dd x\notag\\
    &\quad -p\int_{\R}\sgn(\partial_{x}q_{p}(t,x)^{p})q_{p}^{p}V''(W_{p}[q_{p},\gamma_{p}](t,x))\big(\partial_{x}W_{p}[q_{p},\gamma_{p}](t,x)\big)^{2}\dd x\notag\\
    &\quad -p\int_{\R}\sgn(\partial_{x}q_{p}(t,x)^{p})q_{p}^{p}V'(W_{p}[q_{p},\gamma_{p}](t,x))\partial_{x}^{2}W_{p}[q_{p},\gamma_{p}](t,x)\dd x\notag
    \intertext{estimating in the proper norms---assuming that those will be finite and with \(q_{\max}\coloneqq \|q_{0}\|_{L^{\infty}(\R)}\)}
    &\leq p\|V'\|_{L^{\infty}((q_{\min},q_{\max}))}|q_{p}^{p}(t,\cdot)|_{TV(\R)}\|\partial_{2}W_{p}[q_{p},\gamma_{p}](t,\cdot)\|_{L^{\infty}(\R)}\label{eq:TV_estimate_p_0_1}\\
    &\quad + pq_{\max}^{p}\|V''\|_{L^{\infty}((q_{\min},q_{\max}))}\|\partial_{2}W_{p}[q_{p},\gamma_{p}](t,\cdot)\|_{L^{\infty}(\R)}\|W_{p}[q_{p},\gamma_{p}](t,\cdot)\|_{TV(\R)}\label{eq:TV_estimate_p_0_2}\\
    &\quad +pq_{\max}^{p}\|V'\|_{L^{\infty}((q_{\min},q_{\max}))}|\partial_{2}W_{p}[q_{p},\gamma_{p}](t,\cdot)|_{TV(\R)}.\label{eq:TV_estimate_p_0_3}
\end{align}
At this point, we require to obtain estimates of \(\partial_{2}W_{p}\) in \(L^{\infty}(\R),\) in \(L^{1}(\R)\) and  in \(TV(\R)\) in terms of \(|q^{p}(t,\cdot)|_{TV(\R)}\) for \(t\in[0,T]\). Recall the identity in \cref{eq:partial_2_W_reformulations} for \((t,x)\in (0,T)\times\R\)
\[
\partial_{2}W_{p}[q_{p},\gamma_{p}](t,x)=\tfrac{W_{p}[q_{p},\gamma_{p}](t,x)^{1-p}}{p\eta}\bigg(\gamma_{p}(1)^{p}q_{p}(t,x+\eta)^{p}-\gamma_{p}(0)^{p}q_{p}(t,x)^{p}+\!\int_{x}^{x+\eta}\!\!\!\!\!\!\!\!\!\!\!\tfrac{\dd}{\dd x}\gamma_{p}(\tfrac{y-x}{\eta})^{p}q_{p}(t,y)^{p}\dd y \bigg).
\]
From this, we can deduce, thanks to \(\|\gamma_{p}\|_{L^{p}((0,1))}=1,\) and \(\gamma\geqq \gamma_{\min} \text{ on } (0,1)\),
\begin{align*}
\|\partial_{2}W_{p}[q_{p},\gamma_{p}](t,\cdot)\|_{L^{\infty}(\R)}&\leq \tfrac{1}{\eta p}\|W_{p}[q_{p},\gamma_{p}](t,\cdot)\|_{L^{\infty}(\R)}^{1-p}\Big(\big|\gamma_{p}(1)^{p}q_{\min}^{p}-\gamma_{p}(0)^{p}q_{\max}^{p}\big|+|\gamma_{p}^{p}|_{TV((0,1))}q_{\max}^{p}\Big)\\
&\leq \tfrac{q_{\max}^{1-p}}{\eta p\|\gamma\|_{L^{p}(0,1)}^{p}}\Big(\big|\gamma(1)^{p}q_{\min}^{p}-\gamma(0)^{p}q_{\max}^{p}\big|+|\gamma^{p}|_{TV((0,1))}q_{\max}^{p}\Big)\\
&\leq \tfrac{q_{\max}^{1-p}}{\eta p\gamma_{\min}^{p}}\Big(\big|\gamma(1)^{p}q_{\min}^{p}-\gamma(0)^{p}q_{\max}^{p}\big|+p\gamma_{\min}^{p-1}|\gamma|_{TV((0,1))}q_{\max}^{p}\Big)\\
&\leq \tfrac{q_{\max}^{1-p}}{\eta\gamma_{\min}^{p}}\bigg(\Big|\int_{\gamma(0)^{p}q_{\max}^{p}}^{\gamma(1)^{p}q_{\min}^{p}} s^{p-1}\dd s\Big|+\gamma_{\min}^{p-1}|\gamma|_{TV((0,1))}q_{\max}^{p}\bigg)\\
&\leq \tfrac{q_{\max}^{1-p}}{\eta\gamma_{\min}^{p}}\bigg(\Big|\int_{\gamma(0)^{p}q_{\max}^{p}}^{\gamma(1)^{p}q_{\min}^{p}} s^{p-1}\dd s\Big|+\gamma_{\min}^{p-1}|\gamma|_{TV((0,1))}q_{\max}^{p}\bigg)\\
&\leq \tfrac{q_{\max}^{1-p}}{\eta\gamma_{\min}^{p}}\Big(\gamma_{\min}^{p(p-1)}q_{\min}^{p(p-1)}\gamma(0)^{p}q_{\max}^{p}+\gamma_{\min}^{p-1}|\gamma|_{TV((0,1))}q_{\max}^{p}\Big)\\
&\leq \tfrac{q_{\max}}{\eta}\gamma_{\min}^{p(p-2)}q_{\min}^{p(p-1)}\gamma(0)^{p} + \tfrac{q_{\max}}{\eta\gamma_{\min}}|\gamma|_{TV((0,1))}\\
&\leq \tfrac{q_{\max}}{\eta}\gamma_{\min}^{-1}q_{\min}^{-\frac{1}{4}}\gamma(0)^{p} + \tfrac{q_{\max}}{\eta\gamma_{\min}}|\gamma|_{TV((0,1))}
\eqqcolon W_{2,\max}
\end{align*}
However, this term remains bounded uniformly for \(p\in(0,1)\).

Furthermore, we obtain by an intergration by parts in the integral term
\begin{align}
\|\partial_{2}W_{p}[q_{p},\gamma_{p}](t,\cdot)\|_{L^{1}(\R)}&\leq \tfrac{1}{\eta p}q_{\max}^{1-p} \int_{\R}\int_{x}^{x+\eta}\gamma_{p}\big(\tfrac{y-x}{\eta}\big)^{p} \big|\tfrac{\dd}{\dd y}q_{p}(t,y)^{p}\big|\dd y\dd x\label{eq:partial_2_W_L_1_0}
\intertext{and changing the order of integration yields}
&=\tfrac{1}{\eta p}q_{\max}^{1-p} \int_{\R}|\tfrac{\dd}{\dd y}q_{p}(t,y)^{p}|\dd y\int_{y-\eta}^{y}\gamma_{p}\big(\tfrac{y-x}{\eta}\big)^{p}\dd x\dd y=\tfrac{1}{p}q_{\max}^{1-p}|q_{p}(t,\cdot)^{p}|_{TV(\R)}.\label{eq:partial_2_W_L_1_1}
\end{align}
Now to the total variation of \(\partial_{2}W_{p}[q_{p},\gamma_{p}]\). It holds for \((t,x)\in\OT\) that
\begin{align*}
    \partial_{x}^{2}W_{p}[q_{p},\gamma_{p}](t,x)&=(1-p)\tfrac{\big(\partial_{x}W_{p}[q_{p},\gamma_{p}](t,x)\big)^{2}}{W_{p}[q_{p},\gamma_{p}](t,x)} + \tfrac{W_{p}[q_{p},\gamma_{p}](t,x)^{1-p}}{p\eta}\bigg(\gamma_{p}(1)^{p}\tfrac{\dd}{\dd x} q_{p}(t,x+\eta)^{p}-\gamma_{p}(0)^{p}\tfrac{\dd}{\dd x} q_{p}(t,x)^{p}\\
    &\qquad\qquad\qquad\qquad\qquad\qquad\qquad\qquad +\int_{x}^{x+\eta}\tfrac{\dd}{\dd x}\gamma_{p}\big(\tfrac{y-x}{\eta}\big)^{p}\tfrac{\dd}{\dd y}q_{p}(t,y)^{p}\dd y\bigg)
\end{align*}
and thus, together with the previous results,
\begin{align*}
    |\partial_{2}W_{p}[q_{p},\gamma_{p}](t,\cdot)|_{TV(\R)}&\leq (1-p)\bigg\|\tfrac{\big(\partial_{2}W_{p}[q_{p},\gamma_{p}](t,\cdot)\big)^{2}}{W_{p}[q_{p},\gamma_{p}](t,\cdot)}\bigg\|_{L^{1}(\R)}\!\!\!\!\!\!\!\!\!\!+\tfrac{q_{\max}^{1-p}}{p\eta}\big(2\gamma_{p}(0)^{p}+|\gamma_{p}^{p}|_{TV((0,1))}\big)|q_{p}(t,\cdot)^{p}|_{TV(\R)}\\
    &\leq \tfrac{1-p}{q_{\min}}\|\partial_{2}W_{p}[q_{p},\gamma_{p}](t,\cdot)\|_{L^{\infty}(\R)}\|\partial_{2}W_{p}[q_{p},\gamma_{p}](t,\cdot)\|_{L^{1}(\R)}\\
    &\quad+\tfrac{2q_{\max}^{1-p}}{p\eta\gamma_{\min}^{p}}\gamma(0)^{p}|q_{p}(t,\cdot)^{p}|_{TV(\R)}+\tfrac{q_{\max}^{1-p}}{\eta\gamma_{\min}}|\gamma|_{TV((0,1))}|q_{p}(t,\cdot)^{p}|_{TV(\R)}
    \intertext{and, with the previous estimates,}
    &\leq \underbrace{\Big(\tfrac{(1-p)}{pq_{\min} }W_{2,\max}+\tfrac{2}{p\eta\gamma_{\min}^{p}}\gamma(0)^{p}+\tfrac{1}{\eta\gamma_{\min}}|\gamma|_{TV((0,1))} \Big)}_{\eqqcolon \Gamma(p,W_{2,\max})}q_{\max}^{1-p}|q_{p}(t,\cdot)^{p}|_{TV(\R)}.
\end{align*}
As \(\Gamma(p,W_{2,\max})\) is uniformly in \(p\in(0,1)\), we can connect back to \crefrange{eq:TV_estimate_p_0_0}{eq:TV_estimate_p_0_3} and obtain
\begin{align*}
\tfrac{\dd}{\dd t}\!\! \int_{\R}\!\! |\partial_{x}q_{p}(t,x)^{p}|\dd x&\leq  p\|V'\|_{L^{\infty}((q_{\min},q_{\max}))}W_{2,\max}|q_{p}(t,\cdot)^{p}|_{TV(\R)}\\
    &\quad + q_{\max}\|V''\|_{L^{\infty}((q_{\min},q_{\max}))}W_{2,\max}|q_{p}(t,\cdot)^{p}|_{TV(\R)}\\
    &\quad +pq_{\max}\|V'\|_{L^{\infty}((q_{\min},q_{\max}))}\Gamma(p,W_{2,\max})|q_{p}(t,\cdot)^{p}|_{TV(\R)}
\end{align*}
from which it follows with a Gronwall estimate that
\[
|q_{p}(t,\cdot)^{p}|_{TV(\R)}\leq |q_{0}^{p}|_{TV(\R)}\exp\Big(t C(p,V,\eta)\Big),\ t\in[0,T]
\]
with
\begin{align*}
    C(p,V,\eta)&\coloneqq \tfrac{pq_{\max}}{\eta\gamma_{\min}}\|V'\|_{L^{\infty}((q_{\min},q_{\max}))}\Big(q_{\min}^{-\frac{1}{4}}\gamma(0)^{p} + |\gamma|_{TV((0,1))}\Big)\\
    &\quad +\tfrac{q_{\max}^{2}}{\eta\gamma_{\min}}\|V''\|_{L^{\infty}((q_{\min},q_{\max}))}\Big(q_{\min}^{-\frac{1}{4}}\gamma(0)^{p} + |\gamma|_{TV((0,1))}\Big)\\
    &\quad +\tfrac{q_{\max}}{\gamma_{\min}\eta}\|V'\|_{L^{\infty}((q_{\min},q_{\max}))}\Big(\tfrac{(1-p)q_{\max}}{q_{\min}^{\frac{5}{4}} }\gamma(0)^{p} + q_{\max}|\gamma|_{TV((0,1))}+\tfrac{2}{\gamma_{\min}^{p-1}}\gamma(0)^{p}+p|\gamma|_{TV((0,1))} \Big).
\end{align*}
As \(C(p,V,\eta)\) is uniformly bounded in \(p\in(0,1)\), we obtain uniform \(TV\) estimates on \(q^{p}(t,\cdot)\) for any \(t\in[0,T]\) and with those, due to \(q,\ q_{0}\) being bounded away from zero uniformly in \(p\), those \(TV\) bounds also carry over to \(q_{p}\).

Next, we look into the time compactness and derive, for \(t_{1},t_{2}\in[0,T],\ t_{2}\geq t_{1}\), similar to the result in \cref{cor:compactnessWp} on \(W_{\eta}^{p}\)
\begin{align*}
    \int_{\R}|q_{p}(t_{1},x)-q_{p}(t_{2},x)|\dd x&\leq \int_{\R}\int_{t_{1}}^{t_{2}}|\partial_{s}q_{p}(s,x)|\dd s\dd x\\
    &\leq \int_{t_{1}}^{t_{2}}\int_{\R}|V(W_{p}[q_{p},\gamma_{p}](s,x))\partial_{x}q_{p}(s,x)|\dd x\dd s\\
    &\quad +\int_{t_{1}}^{t_{2}}\int_{\R}|V'(W_{p}[q_{p},\gamma_{p}](s,x))\partial_{x}W_{p}[q_{p},\gamma_{p}](s,x)|\dd x\dd s\\
    &\leq \|V\|_{L^{\infty}((q_{\min},q_{\max}))}(t_{2}-t_{1})|q_{p}|_{L^{\infty}((0,T);TV(\R))}\\
    &\quad +\|V'\|_{L^{\infty}((q_{\min},q_{\max}))}(t_{2}-t_{1})|\partial_{2}W_{p}[q_{p},\gamma_{p}]|_{L^{\infty}((0,T);L^{1}(\R))}.\\
    \intertext{Taking advantage of \crefrange{eq:partial_2_W_L_1_0}{eq:partial_2_W_L_1_1}}
    &\leq \|V\|_{L^{\infty}((q_{\min},q_{\max}))}(t_{2}-t_{1})|q_{p}|_{L^{\infty}((0,T);TV(\R))}\\
    &\quad +\|V'\|_{L^{\infty}((q_{\min},q_{\max}))}(t_{2}-t_{1})\tfrac{q_{\max}^{1-p}}{q_{\min}^{1-p}}|q_{p}|_{L^{\infty}((0,T);TV(\R))}.
\end{align*}
As again the right hand side is uniformly bounded in \(p\in(0,1)\), we obtain the equi-integrability in time, and have, thanks to \cite[Lemma 1]{Simon1986},  for any \(\Omega\subset\R\)
\[
\big\{q_{p}\in C([0,T];L^{1}(\Omega)): p\in(0,1)\big\}\xhookrightarrow{\text{c}} C([0,T];L^{1}(\Omega)).
\]
Thus, we know that there exists a subsequence \((p_{k})_{k\in\N}:\ \displaystyle{\lim_{k\rightarrow\infty}} p_{k}=0\) and a \(q_{*}\in C([0,T];L^{1}(\Omega))\) so that
\[
\lim_{k\rightarrow\infty} \|q_{p_{k}}-q_{*}\|_{C([0,T];L^{1}(\Omega))}=0.
\]

In the next step, we establish that the nonlocal operator converges to the ``right'' limit.

To this end, we estimate 
\begin{enumerate}
\item first as follows---writing for the mentioned subsequence just the sequence with an abuse of notation---for \((t,x)\in\OT\) using the mean value theorem in integral form
\begin{align*}
    \big|W_{p}[q_{p},\gamma_{p}](t,x)-W_{p}[q_{*},\gamma_{p}](t,x)\big|
    &=\tfrac{1}{p}\bigg|\int_{a}^{b} s^{\frac{1}{p}-1}\dd s\big|_{a=W_{p}[q_{*},\gamma_{p}](t,x)^{p}}^{b=W_{p}[q_{p},\gamma_{p}](t,x)^{p}}\bigg|,
   \intertext{and, as \(p< 1\), the integrand is monotonically increasing. For this reason, we can perform a ``worst case'' estimate, using that \(W_{p}[q_{p},\gamma_{p}]^{p}\leqq \|q_{0}\|_{L^{\infty}(\R)}^{p}\), and obtain}
    &\leq \tfrac{1}{p} \big(\|q_{0}\|_{L^{\infty}(\R)}^{p}\big)^{\frac{1}{p}-1}\!\!\!\int_{0}^{1}\!\!\gamma_{p}(y)^{p}\big|q_{p}(t,x+\eta y)^{p}-q_{\ast}(t,x+\eta y)^{p}\big|\dd y.
    \intertext{Applying another time the mean value theorem on the function \(x\mapsto  x^{p}\) and recalling the lower bound on the solution}
    &\leq  \tfrac{\|q_{0}\|_{L^{\infty}(\R)}^{1-p}}{q_{\min}^{1-p}}\int_{0}^{1}\!\!\gamma_{p}(y)^{p}\big|q_{p}(t,x+\eta y)-q_{\ast}(t,x+\eta y)\big|\dd y.
\end{align*}
Integrating over \(\tilde{\Omega}\subset \Omega\) so that \(\tilde{\Omega}\subset \Omega + \eta[0,1]\) (this always exists as the \(\eta[0,1]\) is a bounded set), we have, for \(t\in[0,T]\), using the previously derived estimate
\begin{align*}
    \big\|W_{p}[q_{p},\gamma_{p}](t,\cdot)-W_{p}[q_{*},\gamma_{p}](t,\cdot)\big\|_{L^{1}(\tilde{\Omega})}&\leq \tfrac{\|q_{0}\|_{L^{\infty}(\R)}^{1-p}}{q_{\min}^{1-p}}\!\!\int_{\tilde{\Omega}}\int_{0}^{1}\!\!\!\!\gamma_{p}(y)^{p}\big|q_{p}(t,x+\eta y)-q_{\ast}(t,x+\eta y)\big|\dd y \dd x\\
    &\leq \|q_{p}(t,\cdot)-q_{*}(t,\cdot)\|_{L^{1}(\Omega)}\tfrac{\|q_{0}\|_{L^{\infty}(\R)}^{1-p}}{q_{\min}^{1-p}}\int_{0}^{1}\gamma_{p}(y)^{p}\dd y\\
    &= \tfrac{\|q_{0}\|_{L^{\infty}(\R)}^{1-p}}{q_{\min}^{1-p}}\|q_{p}(t,\cdot)-q_{*}(t,\cdot)\|_{L^{1}(\Omega)}.
\end{align*}
Making this uniform in \(t\), we obtain
\begin{equation}
\lim_{p\rightarrow 0} \big\|W_{p}[q_{p},\gamma_{p}]-W_{p}[q_{*},\gamma_{p}]\big\|_{C([0,T];L^{1}(\tilde{\Omega}))}=0.\label{eq:convergence_p=0_solution}
\end{equation}
\item Next, we estimate
\begin{align*}
    \big|W_{p}[q_{*},\gamma_{p}](t,x)-W_{p}[q_{*},\gamma_{0}](t,x)\big|&\leq\|q_{0}\|_{L^{\infty}(\R)} \Bigg|\bigg(\tfrac{1}{\eta}\!\int_{x}^{x+\eta}\!\!\!\!\!\!\!\!\!\!\gamma_{p}^{p}\big(\tfrac{y-x}{\eta}\big)\dd y\bigg)^{\frac{1}{p}}\!\!\!\!-\bigg(\tfrac{1}{\eta}\!\int_{x}^{x+\eta}\!\!\!\!\!\!\!\!\!\!\gamma_{0}^{p}\big(\tfrac{y-x}{\eta}\big)\dd y\bigg)^{\frac{1}{p}}\Bigg|\\
    &=\|q_{0}\|_{L^{\infty}(\R)} \Bigg|\bigg(\tfrac{1}{\|\gamma\|_{L^{p}((0,1))}^{p}}\!\int_{0}^{1}\gamma^{p}(y)\dd y\bigg)^{\frac{1}{p}}\!\!\!\!-\tfrac{\Big(\int_{0}^{1}\gamma(y)^{p}\dd y\Big)^{\frac{1}{p}}}{\|\gamma\|_{L^{0}((0,1))}}\Bigg|\\
    &=\|q_{0}\|_{L^{\infty}(\R)}\Big|1-\tfrac{\|\gamma\|_{L^{p}((0,1))}}{\|\gamma\|_{L^{0}((0,1))}}\Big|\overset{p\rightarrow 0}{\longrightarrow} 0
\end{align*}
applying \cref{lem:L_p_p=0}.
This is uniform in \((t,x)\in\OT\) so that we have
\begin{equation}
  \lim_{p\rightarrow 0}\big\|W_{p}[q_{*},\gamma_{p}]-W_{0}[q_{*},\gamma_{0}]\big\|_{C([0,T];L^{1}_{\text{loc}}(\R))\cap L^{\infty}((0,T);L^{\infty}_{\text{loc}}(\R))}=0.\label{eq:convergence_p=0_nonlocal_weight}
\end{equation}
\item Finally, we look into the convergence of the nonlocal operator, and obtain when adding zeros for \((t,x)\in(0,T)\times \tilde{\Omega}\) and \(p\in(0,1)\)
\begin{align*}
    \big\|W_{p}[q_{p},\gamma_{p}]-W_{0}[q_{*},\gamma_{0}]\big\|_{L^{1}((0,T)\times\tilde{\Omega})}&\leq 
    \big\|W_{p}[q_{p},\gamma_{p}]-W_{p}[q_{*},\gamma_{p}]\big\|_{L^{1}((0,T)\times\tilde{\Omega})}\\
    &\quad +\big\|W_{p}[q_{*},\gamma_{p}]-W_{p}[q_{*},\gamma_{0}]\big\|_{L^{1}((0,T)\times\tilde{\Omega})}\\
    &\quad +\big\|W_{p}[q_{*},\gamma_{0}]-W_{0}[q_{*},\gamma_{0}]\big\|_{L^{1}((0,T)\times\tilde{\Omega}))}.
\end{align*}
The first term in the previous estimate converges for \(p\rightarrow 0\) thanks to \cref{eq:convergence_p=0_solution}, and the second term due to \cref{eq:convergence_p=0_nonlocal_weight}. The third term needs some further justification. According to \cref{lem:L_p_p=0},  we have   
\begin{align*}
W_{\eta,p}[q_{\eta,*},\gamma_{0}](t,x)\overset{p\rightarrow 0}{\longrightarrow} W_{\eta,0}[q_{\eta,*},\gamma_{0}](t,x) \ \text{ for } (t,x)\in (0,T)\times \tilde{\Omega} \text{ a.e.}
\end{align*}
with \(W_{0}\coloneqq W_{\eta,0}\) as in \cref{eq:W_eta_0}. As we have an integrable majorant, i.e., 
\[
|W_{\eta,p}[q_{\eta,*},\gamma_{0}](t,x)| \leq \|q_{0}\|_{L^{\infty}(\R)}\ \forall (t,x,p)\in(0,T)\times\R\times \R_{>0},
\]
we can apply the dominated convergence theorem to obtain
\[
W_{\eta,p}[q_{\eta,*},\gamma_{0}]\overset{p\rightarrow 0}{\longrightarrow} W_{\eta,0}[q_{\eta,*},\gamma_{0}]\ \text{ in } L^{1}((0,T)\times\tilde{\Omega}).
\]
\end{enumerate}
Thus, we have obtained
\[
\lim_{p\rightarrow 0}     \big\|W_{p}[q_{p},\gamma_{p}]-W_{0}[q_{*},\gamma_{0}]\big\|_{L^{1}((0,T)\times\tilde{\Omega})}=0.
\]
It remains to show that the limit \(q_{*}\) with corresponding nonlocal term \(W_{\eta,0}[q_{*},\gamma_{0}]\) is a weak solution of \cref{eq:strong_form_solution_p=0} as defined in \cref{defi:weak_solution_p_equal_zero}.
But this is a direct consequence of the strong convergence of the nonlocal term in \(L^{1}((0,T^{*})\times\tilde{\Omega})\) together with the strong convergence of the solution \(q_{\eta,*}\) when passing to the limit in the weak formulation for \(p\in\R_{>0}\) as in \cref{defi:weak_solution}. 

As solutions for \(p=0\) are unique according to \cref{lem:existence_uniqueness_p=0}, we obtain, by classical arguments, that each sequence \(q_{\eta,p}\) converges strongly in \(L^{1}_{\text{loc}}((0,T)\times\R)\) to the weak solution of the \(p=0\) case which concludes the proof.
\end{proof}

\begin{rem}[The ``admissible'' kernels \(\gamma\)]
    For the convergence result \(p\rightarrow 0\) in \cref{theo:convergence_p=0} we require on \(\gamma\) that \(\gamma\geqq \gamma_{\min}>0\).
    This is for instance satisfied for the constant kernel. It is also satisfied for decreasing kernels which are bounded away from zero on \([0,1]\), so that one can conclude that the choice of kernel is not restrictive.
    
\end{rem}
\begin{rem}[The case \(p=\infty\)]\label{rem:p=infty}
Notice that, formally, we obtain for the nonlocal conservation law in the \(p=\infty\) case the following
\begin{equation}
\begin{aligned}
    \partial_{t}q+\partial_{x}\big(V(W_{\eta,\infty}[q,\gamma](t,x))q\big)&=0,&& (t,x)\in\OT\\
    q(0,x)&=q_{0}(x), && x\in\R,
\end{aligned}
\label{eq:p=infty}
\end{equation}
with the nonlocal operator that coincides with the weighted supremum downstream, i.e.,
\[
  W_{\eta,\infty}[q,\gamma](t,x)\coloneqq \esssup_{y\in\R_{\geq x}}\gamma(\tfrac{x-y}{\eta})q(t,y)=\esssup_{y\in\R_{\geq 0}}\gamma(y)q(t,x+\eta y)=\|\gamma(\cdot) \, q(t,x+\eta \, \cdot)\|_{L^{\infty}(\R_{\geq0})},\ (t,x)\in\OT.
\]
To the authors' knowledge, ``nonlocal'' equations of this type (in the sense that at a given position one potentially takes the density at a different position) have not been studied in literature and it is unclear in which sense solutions might even exist.

One straightforward approach consists of proving  the convergence of the solution of the nonlocal \(p\)-norm model considered in this work converges for \(p\rightarrow\infty\) in a suitable way, and thus showing the existence of solutions. However, at present we were incapable of proving this, mainly due to the fact that weak star convergence of the solution \(q_{p}\) or even strong convergence in \(L^{1}\) do not seem to be sufficient to pass to the limit in the nonlocal term. A rather ``primitive'' convergence attempt would require uniform convergence of \(q_{p}\) to a limit which does not hold in general. 

Another approach for the \(p\rightarrow\infty\) case would be taking monotone initial data and using the monotonicity, uniform in \(p\), in \cref{eq:monotonicity_preserving}, as the equation then reduces to a local equation \cref{eq:p=infty} and the essential supremum is attained either at \(x+\eta\) or \(x\), depending on the type of monotonicity and assuming constant kernel. As this is a quite weak result, we do not detail it further, but only study the convergence numerically.

Finally, one could make an attempt through vanishing viscosity. However, such an approach also only seems to give a uniform \(BV\) bound, which again would not suffice to pass to the limit.

It is also worth mentioning that an entropy condition for uniqueness is surely required. This can be seen once more for a monotonically decreasing initial datum and the constant kernel as the equation then becomes a classical local conservation law which does not exhibit unique weak solutions for monotonically decreasing initial datum and \(V'\leqq 0\) (i.e., the rarefaction requires additional entropy).
\end{rem}
\section{Numerics}\label{sec:numerics}
In this section, we present numerical examples illustrating the proved results in the manuscript. Our simulations are based on the method of characteristics and are as such not smoothed through the numerical method.

\begin{figure}
    \centering
    \includegraphics[scale=0.7]{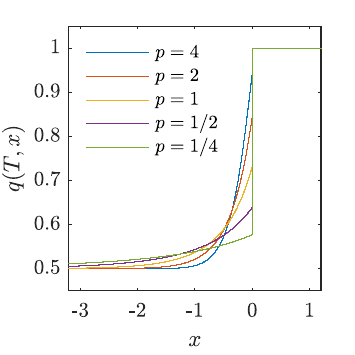}
    \includegraphics[scale=0.7,trim=30 0 0 0,clip=true]{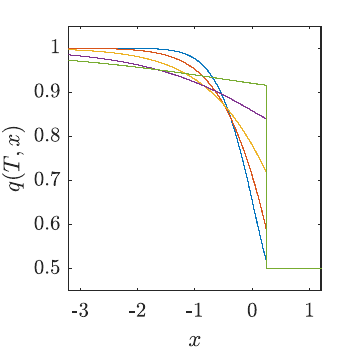}
        \includegraphics[scale=0.7,trim=0 0 0 0,clip=true]{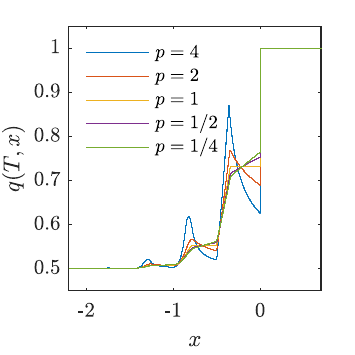}
            \includegraphics[scale=0.7,trim=30 0 0 0,clip=true]{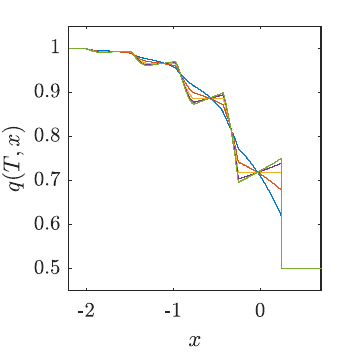}
    \caption{The figures show numerical approximations of solutions to \cref{eq:pnorm_problem} for the exponential kernel (\textbf{first} and \textbf{second}) constant kernel (\textbf{third} and \textbf{fourth}), with $\eta = 0.5$, for various values of $p$ and two choices of initial data: \textbf{first} and \textbf{third}: monotonically increasing, $q_0(x)=0.5+0.5\chi_{\R_{>0}}(x)$; \textbf{second} and \textbf{fourth}: monotonically decreasing, $q_0(x)=0.5+0.5\chi_{(\R_{<0})}(x)$. The first two figures, i.e., the once with the exponential kernel, illustrate the monotonicity preservation for all $p$ (see \cref{eq:monotonicity_preserving}); the third and last figure, i.e., the once with the constant kernel illustrate monotonicity preservation in the increasing case for all $p\in[1,\infty)$, and in the decreasing case for all $p\in (0,1]$.
    }
    \label{fig:const_kernel_vary_p}
\end{figure}

In \cref{fig:const_kernel_vary_p} one can observe the montonicity properties of the nonlocal dynamics in the \(p\)-norm as in \cref{defi:model_class} for different values of \(p\) and a linear velocity \(V(x)=1-x,\ x\in[0,1]\) as stated in \cref{sec:monotonicity} and particularly in \cref{thm:monotonicity_pinfty}. The monotonicity preservation in the case of the exponential kernel can be observed as also the monotonicity break when the range of \(p\) (\((0,1]\) vs.\ \([1,\infty)\)) is not in line with the monotonically decreasing/increasing datum. For further details on the data, we refer to the caption of \cref{fig:const_kernel_vary_p}.

\begin{figure}
    \centering
    \includegraphics[scale=0.7]{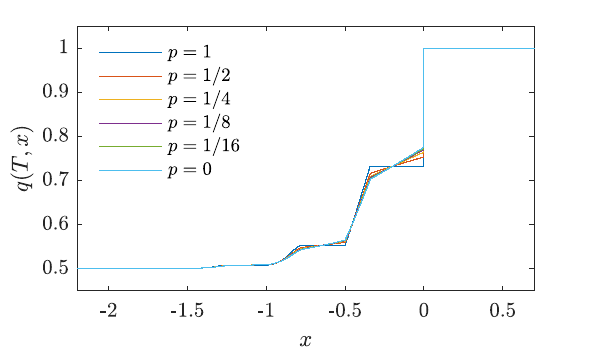}
    \includegraphics[scale=0.7,trim=35 0 0 0,clip=true]{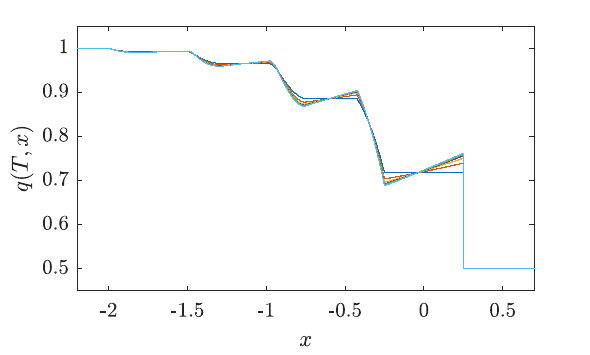}
    \caption{The figures show numerical approximations of solutions to \cref{eq:pnorm_problem} for the constant kernel, i.e., \cref{eq:nonlocal_operator_2} with $\eta = 0.5$, for $p\rightarrow0$ and two choices of initial data: \textbf{left:} monotonically increasing, $q_0(x)=0.5+0.5\chi_{(\R_{>0})}(x)$; \textbf{right:}  monotonically decreasing, $q_0(x)=0.5+0.5\chi_{(\R_{<0})}(x)$. The results clearly show the proven convergence fpr $p\rightarrow 0$ to the equation with nonlinear nonlocality as in \cref{lem:L_p_p=0}.
    }
    \label{fig:const_kernel_p0}
\end{figure}

\Cref{fig:const_kernel_p0} tackles the convergence of the solution to the nonlocal \(p\)-norm conservation law in \cref{defi:model_class} when \(p\rightarrow 0\), i.e., when the limiting solution satisfies the dynamics in \cref{defi:weak_solution_p_equal_zero}. One can not only observe the claimed convergence, but also how such dynamics changes for varying \(p\in(0,1]\). Clearly, for monotone increasing initial datum, the monotonicity is preserved, for monotonically decreasing datum only in the limit \(p=1\).

\begin{figure}
    \centering
    \includegraphics[scale=0.7]{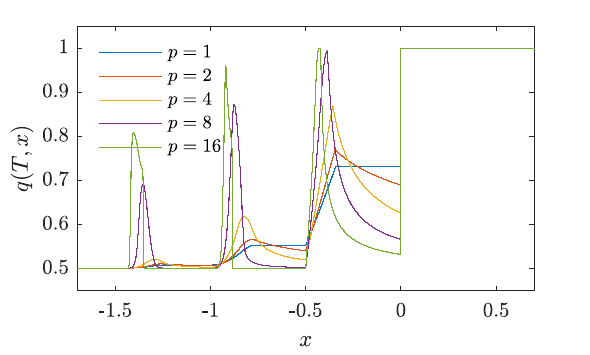}
    \includegraphics[scale=0.7,trim=35 0 0 0,clip=true]{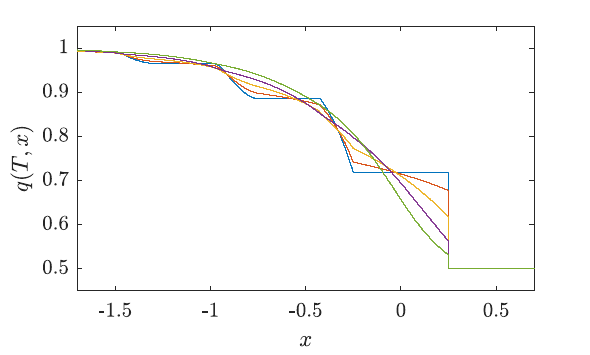}
    \caption{The figures show numerical approximations of solutions to \cref{eq:pnorm_problem} for the constant kernel, i.e., \cref{eq:nonlocal_operator_2} with $\eta = 0.5$, for $p\rightarrow \infty$ at final time $T=0.5$ and two choices of initial data: \textbf{left:} monotonically increasing, $q_0(x)=0.5+0.5\chi_{(\R_{>0})}(x)$; \textbf{right:} monotonically decreasing, $q_0(x)=0.5+0.5\chi_{(\R_{<0})}(x)$. These results show, on the one hand, for mononically inceasing datum
    }
    \label{fig:const_kernel_pinfty}
\end{figure}

In \cref{fig:const_kernel_pinfty}, we analyze numerically the behavior of solutions of \cref{defi:model_class} when \(p\) tends to \(\infty\) as discussed (but not proven) in \cref{rem:p=infty}. Due to the fact that density further downstream is considered ``more'' than in the case of smaller \(p\in\R_{>0},\) one can see some ``phantom shocks'' emerging (\(x\in\{-1.4,-0.9,-0.4\}\)), advertising for the reasonability of the model class as well as interesting new features which cannot be observed for \(p=1\). One can additionally observe that the jump discontinuity stemming from the initial datum is damped slower the higher the \(p\) value (\(x\approx1\)) gets. The monotonically deacreasing case in the right picture is also demonstrated. Interestingly, in contrast to the case described above, the jump discontinuity arising from the initial datum (\(x\approx0.25\)) is dampened out faster the higher $p$ gets. This can be explained as for $p =  \infty$ in the monotne decreasing case, the nonlocal conservation law indeed gets local as $\|q(t,\cdot)\|_{L^\infty((x,x+\eta))}  = q(t,x) \ \forall x \in \R$ and for the local case, the discontinuities vanish for all $t>0$ (recall for instance Oleinik's entropy condition \cite{oleinik_english}.

\begin{figure}
    \centering

    \includegraphics[scale=0.7]{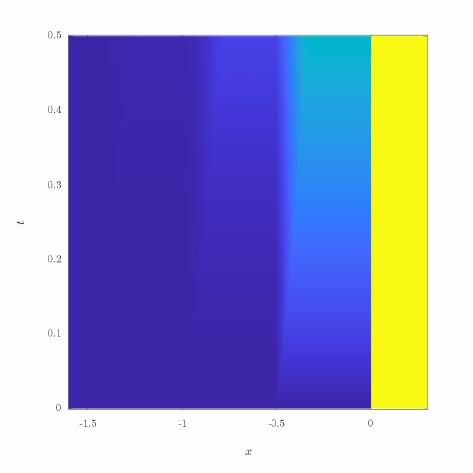}
    \includegraphics[scale=0.7,clip=true,trim=30 0 0 0]{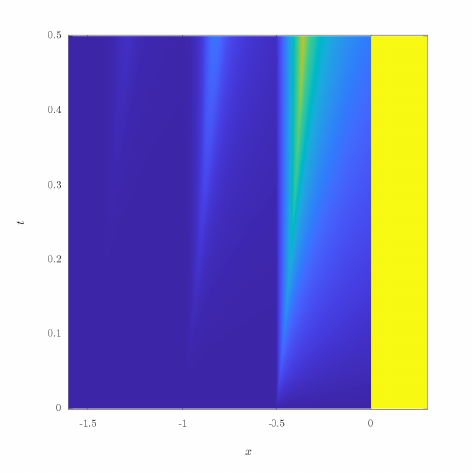}
    \includegraphics[scale=0.7,clip=true,trim=30 0 0 0]{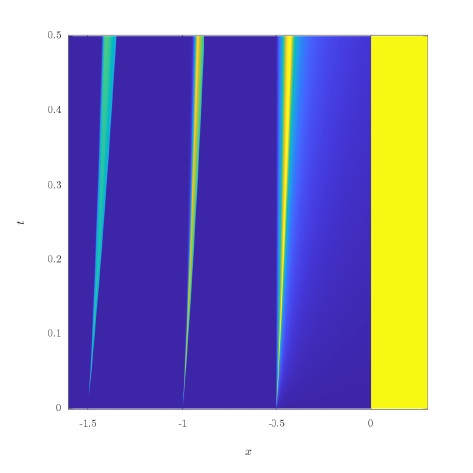}
    
    \includegraphics[scale=0.7]{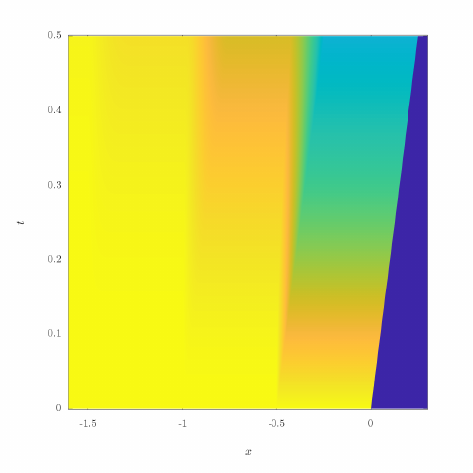}
    \includegraphics[scale=0.7,clip=true,trim=30 0 0 0]{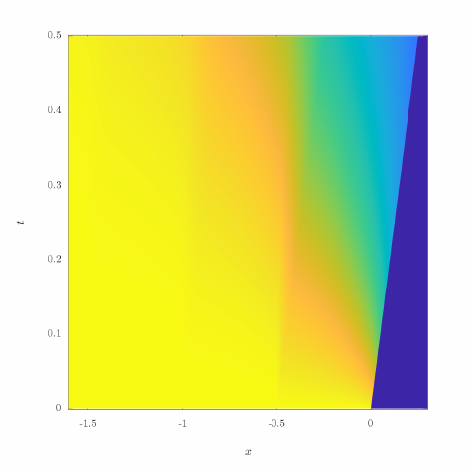}
    \includegraphics[scale=0.7,clip=true,trim=30 0 0 0]{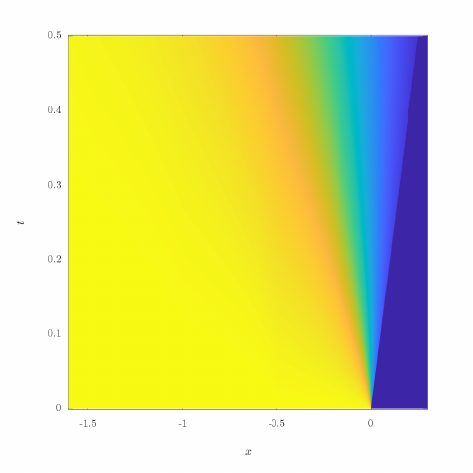}

    \caption{The figures show numerical approximations of solutions to \cref{eq:pnorm_problem} for the constant kernel, i.e., \cref{eq:nonlocal_operator_2} with $\eta = 0.5$, for different $p\in\{1,4,16\}$ over the time-horizon $[0,0.5]$ and two choices of initial data: \textbf{top row:} monotonically increasing, $q_0(x)=0.5+0.5\chi_{(\R_{>0})}(x)$; \textbf{bottom row:} monotonically decreasing, $q_0(x)=0.5+0.5\chi_{(\R_{<0})}(x)$ where $p=1$ in the \textbf{first column}, $p=4$ in the \textbf{second column} and $p=16$ in the \textbf{last column}. }
    \label{fig:3D_const_kernel_vary_p}
\end{figure}

Finally, \cref{fig:3D_const_kernel_vary_p} shows the surface plot over space-time of the problem setup discussed before for \cref{fig:const_kernel_pinfty}. Particularly for monotonically increasing datum, in the considered case a maximal density \(1\) for \(x\geqq 0\), one can observe shockwaves travelling downstream which become intensivied and more localized the larger \(p\) becomes. This observations are similar to the studies in \cite{keimer2019delay} considering time delay in nonlocal conservation laws. Drivers' reaction on traffic jams ahead are either examplified due to a high $p$ in the herein considered model or delayed due to a reaction-time modeled in the nonlocal conservation law with delay. Both model similar kind of super-compensation of the traffic jam ahed and thus to kind of phantom shocks.

\section{Conclusion and open problems}\label{sec:conclusions}
We have established the existence and uniqueness of weak solutions for nonlocal conservation laws with the  \(p\)-norm nonlocality under the restriction on the initial datum (and, as such, the solution) to be bounded away from zero. In addition, for specific not-compactly supported nonlocal kernels, we can get rid of this condition as long as the initial datum is bounded away from zero on a ``substantial part'' of \(\R\). Then, we study the singular limit problem for \(\eta\rightarrow 0\) and obtain in all cases the convergence to the local entropy solution, generalizing the singular limit convergence result currently available in literature significantly. We also look into the convergence \(p\rightarrow 0\) (so that, in the limit, we no longer have a norm) and obtain a very interesting limiting nonlocal equation for which we can prove that the solution for \(p>0\) converges.

The results in this work suggest several new and interesting problems that may be worth future study:

\begin{itemize}
    \item Assume that the initial datum can become zero and we add no further restrictions in contrast to the theory outlined in \cref{sec:existence_zero_initial_datum}. Do we still obtain solutions? Are these unique?
    \item Can we obtain convergence of the solution \(q_{p}\) for \(p\rightarrow\infty\) as commented in \cref{rem:p=infty}?  The resulting conservation law is very different from what has been considered in literature and can be justified also as a reasonable traffic flow model where one would take as density downstream the somewhat most conservative approach breaking as much as possible also if traffic is high further away (taking into account the nonlocal kernel as well, this would be a weighted density). Numerical methods suggest very interesting phanomena worth exploring \cref{sec:numerics} and \cref{fig:3D_const_kernel_vary_p}.
    \item Is the \(p\)-norm of advantage in the singular limit case for multi-D equations as in \cite{spinola}?
    Is there generally an optimal \(p\) given an initial datum?
    Can one quantify the role of \(p\) in the singular limit convergence?
    
    \item Can one obtain a singular limit convergence result for any \(p\in (0,1)\) without further restrictions on kernel (except for being monotone), velocity (except for \(V'\leqq 0\)) and initial datum (except for being nonnegative and \(TV\) bounded) as in \cite{Marconi2023}?
    \item Can one take advantage of the recent \cite{Coclite2025singularlimit} approaching the singular limit convergence by means of compensated compactness? Uniform \(TV\) bounds on the nonlocal operator would then not be required.
\end{itemize}

\bibliographystyle{abbrv}
\bibliography{biblio}

\end{document}